\numberwithin{equation}{section}
\newtheorem{definition}{Definition}[section]
\newtheorem{theorem}{Theorem}[section]
\newtheorem{lemma}{Lemma}[section]
\title{\bf A Linear Quadratic Partially Observed Stackelberg Stochastic Differential Game with Applications
\thanks{This work is supported by National Key R\&D Program of China (Grant No. 2018YFB1305400), National Natural Science Foundations of China (Grant Nos. 11971266, 11831010, 11571205), and Shandong Provincial Natural Science Foundations (Grant Nos. ZR2020ZD24, ZR2019ZD42).}}
\author{\normalsize Yueyang Zheng\thanks{\it School of Mathematics, Shandong University, Jinan 250100, P.R.China, E-mail: zhengyueyang0106@163.com} , Jingtao Shi\thanks{\it Corresponding author. School of Mathematics, Shandong University, Jinan 250100, P.R.China, E-mail: shijingtao@sdu.edu.cn}}
\date{}
\newtheorem{Remark}{Remark}[section]
\begin{document}
\maketitle

\noindent{\bf Abstract:}\quad This paper is concerned with a linear-quadratic partially observed Stackelberg stochastic differential game with correlated state and observation noises, where the diffusion coefficient does not contain the control variable and the control set is not necessarily convex. Both the leader and the follower have their own observation equations, and the information filtration available to the leader is contained in that to the follower. By spike variational, state decomposition and backward separation techniques, necessary and sufficient conditions of the Stackelberg equilibrium points are derived. In the follower's problem, the state estimation feedback of optimal control can be represented by a forward-backward stochastic differential filtering equation and some Riccati equation. In the leader's problem, via the innovation process, the state estimation feedback of optimal control is represented by a stochastic differential filtering equation, a semi-martingale process and three high-dimensional Riccati equations. At the same time, the uniqueness and existence of solutions to adjoint equations can be guaranteed by a new combined idea, and a kind of fully coupled forward-backward stochastic differential equations with filtering is studied as a by-product. Then we give explicit expressions of Stackelberg equilibrium points in a special case. As a practical application, an inspiring dynamic advertising problem with asymmetric information is studied, and the effectiveness and reasonability of the theoretical result is illustrated by numerical simulations. Moreover, the relationship between optimal control, state estimate and some practical parameters is analyzed in detail.

\vspace{1mm}

\noindent{\bf Keywords:}
Stackelberg stochastic differential game, leader and follower, partial observation, linear-quadratic control, state decomposition, backward separation, stochastic filtering, Stackelberg equilibrium point

\vspace{1mm}

\noindent{\bf Mathematics Subject Classification:}\quad 93E20, 49K45, 49N10, 49N70, 60H10

\section{Introduction}

Among various dynamic games, the Stackelberg game was introduced firstly by Stackelberg \cite{S52}, which is a kind of hierarchical noncooperative game (Ba\c{s}ar and Olsder \cite{BO98}). The Stackelberg game is also know as the leader-follower game, whose economic background can be derived from some markets where certain companies have advantages of domination over others. In the Stackelberg game, there are usually two players with asymmetric roles, one follower and one leader. The leader claims his/her strategy in advance, and the follower, considering the given leader's strategy, makes an instantaneous reaction by optimizing his/her cost functional. Then, by taking the rational response of the follower into account, the leader would like to seek an optimal strategy to optimize his/her cost functional. Therefore, one player must make a decision after the other player's decision is made. Since its meaningful structure and background, the Stackelberg game has received substantial interest. Especially, the leader-follower's feature can be applied in many aspects, such as the newsvendor/wholesaler problem (\O ksendal et al. \cite{OSU13}), the optimal reinsurance problem (Chen and Shen \cite{CS18}), the operations management and marketing channel problem (Li and Sethi \cite{LS17}) and the principal-agent/optimal contract problem (Cvitani\'{c} and Zhang \cite{CZ13}).

In the past decades, there have been a great deal of significant works on this issue. Let us mention a few. First, Castanon and Athans \cite{CA76} studied an LQ stochastic dynamic Stackelberg strategies in the early, and obtained a feedback Stackelberg solution for two-person nonzero sum game. An indefinite LQ leader-follower stochastic differential game with random coefficients and control-dependent diffusion was studied by Yong \cite{Yong02}. {\it Forward-backward stochastic differential equations} (FBSDEs for short) and Riccati equations are applied to obtain the state feedback representation of the open-loop Stackelberg equilibrium points. Ba\c{s}ar et al. \cite{BBS10} introduced the notion of mixed leadership in nonzero-sum differential games where one player could act as both leader and follower, depending on the control variable. Bensoussan et al. \cite{BCS15} introduced several solution concepts in terms of the players' information sets, and derived the maximum principle for the leader's global Stackelberg solution under the adapted closed-loop memoryless information structure. Mukaidani and Xu \cite{MX15} investigated the Stackelberg games for linear stochastic systems governed by It\^{o} differential equations with multiple followers. The Stackelberg strategies, obtained by using sets of cross-coupled algebraic nonlinear matrix equations, are developed under two different settings: the followers act either cooperatively to attain Pareto optimality or non-cooperatively to arrive at a Nash equilibrium. Xu and Zhang \cite{XZ16} and Xu et al. \cite{XSZ18} investigated the LQ Stackelberg differential games with time delay. Li and Yu \cite{LY18} proved the solvability of a kind of coupled FBSDEs with a multilevel self-similar domination-monotonicity structure, then it is used to characterize the unique equilibrium of an LQ generalized Stackelberg stochastic differential game with hierarchy in a closed form. Moon and Ba\c{s}ar \cite{MB18} considered the LQ mean-field Stackelberg differential game with the adapted open-loop information structure of the leader where the $N$ followers play a Nash game. Lin et al. \cite{LJZ19} studied the open-loop LQ Stackelberg game of the mean-field stochastic systems and obtained the feedback representation form involving the new state and its mean. Du and Wu \cite{DW19} investigated an LQ Stackelberg game of mean-field {\it backward stochastic differential equations} (BSDEs for short). Zheng and Shi \cite{ZS19} researched the Stackelberg game of BSDEs with complete information. Bensoussan et al. \cite{BCCSSY19} characterized the feedback equilibrium of a general infinite horizon Stackelberg-Nash differential game where the roles of the players are mixed. Recently, Huang et al. \cite{HSW20} studied a controlled linear-quadratic-Gaussian large population system combining major leader, minor leaders and minor followers. The {\it Stackelberg-Nash-Cournot} (SNC for short) approximate equilibrium is derived from the combination of a major-minor mean-field game and a leader-follower Stackelberg game, and the feedback form of the SNC approximate equilibrium strategy is obtained via some coupled Riccati equations. Feng et al. \cite{FHH20} considered the LQ Stackelberg game of BSDEs with constraints.

Noticing all the Stackelberg game considered in the above are set in the complete information background. In other word, both the leader and the follower can observe the state process of the stochastic system directly, which, however, is impractical in reality. Actually, both of them can only acquire partial information because of the information-delay, market competition, private information and limitation policy by the government. Therefore, it is necessary to study the Stackelberg game with partial and asymmetric information. There exist a number of works on this issue, very recently. Shi et al. \cite{SWX16,SWX17} studied the Stackelberg stochastic differential game and introduced a new explanation for the asymmetric information feature, that the information available to the follower is based on the some sub-$\sigma$-algebra of that available to the leader. Shi et al. \cite{SWX20} investigated the LQ Stackelberg stochastic differential game with overlapping information, where the follower's and the leader's information have some joint part, while they have no inclusion relations. Wang et al. \cite{WWZ20} discussed an asymmetric information mean-field type LQ Stackelberg stochastic differential game with one leader and two followers. Zheng and Shi \cite{ZS20CCC} investigated a LQ Stackelberg game of BSDEs with partial information, where the information of the leader is a sub-$\sigma$-algebra of that of the follower.

However, the asymmetric information feature in the above literatures only shows that all the partial information available to the leader and the follower are generated by the noisy processes - standard Brownian motions. In reality, the player's available information usually comes from the observation process. For example, in the financial market, the investor can only observe the security prices. Therefore, the investors would like to make their portfolio selection based on the information filtration generated by the security prices process (Xiong and Zhou \cite{XZ07}). The essential works associated with the partially observed problem is the filtering estimate problem which consists of two processes: the signal process and the observation process. Then the aim of the filtering theory is to estimate the signal process based on the information generated by the observation processes (Liptser and Shiryayev \cite{LS77}, Bensoussan \cite{Ben92}, Xiong \cite{Xiong08} and Wang and Wu \cite{WW08}). Partially observed optimal control and differential game problems have been studied by many researchers, such as Li and Tang \cite{LT95}, Tang \cite{Tang98}, Wang and Wu \cite{WW09}, Huang et al. \cite{HWX09}, Wu \cite{Wu10}, Shi and Wu \cite{SW10}, Wang et al. \cite{WWX13,WWX15,WWX18}, Wu and Zhuang \cite{WZ18}, Xiong et al. \cite{XZZ19}. Up to now, few studies have focussed on the partially observed Stackelberg differential game. Li et al. \cite{LFCM19} derived the necessary condition of this leader-follower stochastic differential game under a partially observed information where the controlled system equation and the observation equation are looked as the equality constraints. Zheng and Shi \cite{ZS20} studied a Stackelberg stochastic differential game with asymmetric noisy observation where the follower cannot observe the state process directly, while the leader can completely observe it. Very recently, Li et al. \cite{LDFCM21} investigated an LQ Gaussian Stackelberg stochastic differential game under asymmetric information. Two decision makers implement control strategies using different information patterns: The follower uses its observation information to design its strategy, whereas the leader implements its strategy using global information.

Our work distinguishes itself from the existing literature in the following aspects:

(1) To our best acknowledge, there is no previous research on the LQ partially observed Stackelberg stochastic differential game using the state decomposition and backward separation techniques, where the leader and the follower have their own observation equations different from each other. For technical need, in this paper we assume that the information filtration available to the leader is included in that available to the follower, compared to Li et al. \cite{LFCM19} where both the leader and the follower's controls are adapted to the filtration generated by the same observation process and Li et al. \cite{LDFCM21}, Zheng and Shi \cite{ZS20} where the information available to the leader is more than that available to the follower. However, it is not a trivial setting on account of the technical difficulty appeared in the follower's part when utilizing the state decomposition technique. First, different from Xiong et al. \cite{XZZ19}, in which the Girsanov's theorem and measure transformation method are used, in this paper we apply state decomposition and backward separation techniques from Wang et al. \cite{WWX15}, although in \cite{XZZ19} two observation equations are also considered. Secondly, different from Wu and Zhuang \cite{WZ18} where the controls of two players are adapted to the filtration generated by the same observation process, therefore, the similar result can be obtained easily as Lemma 2.1 in \cite{WWX15}. Different from \cite{WWX15} in which they consider the control problem, in this paper the Stackelberg game is researched and it fails to proceed in the follower's part without our new assumption (Lemma 3.1), since the two control variables appears in the hierarchical structure. Moreover, many practical examples, such as the market channel and operations management problems, can be applied to verify the assumption's reasonability.

(2) For characterizing the adaptation and relationship of these filtrations well appearing in this paper, in Remark 2.2, we make a detailed analysis for the partially observed problem, especially for the essence of observation equations. More importantly, we compare it with the partial information problem which should be another case, and show their intrinsic distinctions.

(3) We need to solve a partially observed optimal control problem of FBSDFEs in the leader's problem, after dealing with the partially observed optimal control problem of the follower. However, different from all the existing literatures of Stackelberg stochastic differential game with partial information, in which the state equation's driven noises of the leader only contains the independent $\mathcal{F}_t$-standard Brownian motions, in this paper when observing the equations \eqref{leaderstate} and \eqref{together} (or \eqref{together2}), the appearance of a new $\mathcal{F}_t^{Y_1}$-Brownian motion $\widetilde{W}$ (the innovation process) makes our leader's problem become an entirely new problem which is not investigated before. Therefore, we have to consider a kind of state equations with the fact that $\widetilde{W}$ is not independent of the standard Brownian motions $W$ or $\bar{W}$. Moreover, the foremost issue in this step, is to guarantee the uniqueness and existence of solution to this kind of equations. We solve this difficult problem by a new combined idea from Yong \cite{Yong02} and Lim and Zhou \cite{LZ01}.

(4) It is worth noting that we are supposed to give the existence and uniqueness of solution to \eqref{together3} with three different Brownian motions first, then decouple it for obtaining the state estimation feedback of optimal control. Motivated by the usual step and the classical method in Peng and Wu \cite{PW99}, we study a kind of fully coupled FBSDEs with filtering which can be regarded as a special case of \eqref{together3}, and get the existence and uniqueness of their solutions by constructing a contraction mapping and using fixed-point theorem. However, we fail to prove the unique solvability of the FBSDFE \eqref{together3}, because it is not only a fully coupled one, but also contains two kinds of filtering form, $(\hat{X},\hat{Y})$ and $(\check{X},\check{Y})$. Therefore, we can not solve it directly by existing methods. Fortunately, we find an idea to obtain its unique solvability as follows: we first obtain the existence of solution to \eqref{together3} by decoupling technique as usual, and further guarantee that the solution is indeed unique inspired by the method in \cite{LZ01}. More importantly, the existence and uniqueness of adjoint equations \eqref{lambda} and \eqref{K} in the leader's problem should also be given. In fact, we also have difficult in dealing with it because of the existence of $W,\bar{W},\widetilde{W}$. But we can observe that the adjoint equations are related to \eqref{together3}, therefore, we can solve \eqref{together3} first and consequently ensure the unique solvability of \eqref{lambda} and \eqref{K}.

(5) The semi-martingale $\Phi$ of \eqref{Phi} is introduced in the relation \eqref{Y3X} between $Y$ and $X,\hat{X},\check{X}$. However, different from \cite{LFCM19}, the $\Phi$ is not only used to ``absorb" the $\widetilde{W}$ in the filtering equations \eqref{hatX}, but also to explain the reasonable existence of $\widetilde{W}$ appeared in the leader's state and adjoint equations as a driven noise which is control-independent. Moreover, for obtaining the state estimation feedback of the Stackelberg equilibrium point, we make a detailed analysis on the filtrations generated by $Y_1, Y_2$ and $\widetilde{W}$, respectively, and discuss their relationships in order to facilitate the filtering problem.

(6) For giving an explicit form for Stackelberg equilibrium points, we reduce the original problem to a special one under some conditions and obtain some explicit expressions. Then an application to the dynamic advertising problem with asymmetric information is studied, which extends an example in Jorgensen et al. \cite{JTZ03} to the stochastic case with partially observable information. This inspiring motivation is an excellent explanation for the condition that both the leader and the follower can not observe the complete information, and the information available to the leader is less than that available to the follower. Its numerical simulation is also given to verify the effectiveness of the proposed theoretical results, moreover, we also analyze the relationship between the optimal advertising effort, brand image and its estimate and some practical parameters.

The rest of this article is organized as follows. In Section 2, we formulate an LQ partially observed Stackelberg stochastic differential game model and adopt the state decomposition technique to deal with it. In Section 3, the necessary and sufficient conditions of the partially observed optimal control problem of the follower are given by using the backward separation technique, and the state estimation feedback representation is obtained via an FBSDFE and one Riccati equation in Section 3.1. In Section 3.2, we prove the maximum principle of the partially observed optimal control problem of FBSDFEs of the leader, and give the sufficient condition of the optimal control. For obtaining the state estimate feedback, we first obtain the unique solvability of leader's state in a form of FBSDFE by a new combined idea. Then the optimal control can be represented by an SDFE, three high-dimensional Riccati equations and a semi-martingale. In Section 4, we focus on a special partially observed case and give the explicit expressions of Stackelberg equilibrium points. In Section 5, a practical application to the dynamic advertising problem is studied, and its numerical simulation is given to illustrate the effectiveness of the proposed theoretical results. Moreover, the relationship between the Stackelberg equilibrium points, state estimate and some practical parameters is also discussed. Finally, some concluding remarks are list in Section 6.

\section{Problem Formulation}

Let $T>0$ be fixed. Consider a complete filtered probability space $(\Omega,\mathcal{F},(\mathcal{F}_t^{W,\bar{W}})_{0\leq t \leq T},\mathbb{P})$ and two one-dimensional independent standard Brownian motions $W(\cdot)$ and $\bar{W}(\cdot)$ defined in $\mathbb{R}^2$ with $W(0)=\bar{W}(0)=0$, which generates the filtration $\mathcal{F}_{t}^{W,\bar{W}}:=\sigma\{W(r),\bar{W}(r): 0\leq r\leq t\}$ augmented by all the $\mathbb{P}$-null sets in $\mathcal{F}$. We set $\mathcal{F}_t\equiv\mathcal{F}_{t}^{W,\bar{W}}$ if there is no ambiguity. In this paper, $L_{\mathcal{F}_T}^2(\Omega,\mathbb{R})$ denotes the set of $\mathbb{R}$-valued, $\mathcal{F}_T$-measurable, square-integrable random variables, $L^2_\mathcal{F}(0,T;\mathbb{R})$ denotes the set of $\mathbb{R}$-valued, $\mathcal{F}_t$-adapted, square integrable processes on $[0,T]$, and $L^\infty(0,T;\mathbb{R})$ denotes the set of $\mathbb{R}$-valued, bounded functions on $[0,T]$. For any stochastic process $\Delta(\cdot): [0,T]\times\Omega\rightarrow\mathbb{R}$, $\mathcal{F}_t^{\Delta}$ denote the filtration generated by stochastic process $\Delta(\cdot)$ in $[0,t]$, that is, $\mathcal{F}_t^{\Delta}:=\sigma\{\Delta(r):0\leq r \leq t\}$.

\vspace{1mm}
We consider the following controlled linear {\it stochastic differential equation} (SDE for short):
\begin{equation}\label{LQsde}
\left\{
\begin{aligned}
dx^{v_1,v_2}(t)&=\big[A(t)x^{v_1,v_2}(t)+B_1(t)v_1(t)+B_2(t)v_2(t)+\alpha(t)\big]dt\\
               &\quad+c(t)dW(t)+\bar{c}(t)d\bar{W}(t),\ t\in[0,T],\\
 x^{v_1,v_2}(0)&=x_0,
\end{aligned}
\right.
\end{equation}
and the observation processes of the follower and the leader satisfy
\begin{equation}\label{observation f}
\left\{
\begin{aligned}
dY_1^{v_1,v_2}(t)&=\big[f_1(t)x^{v_1,v_2}(t)+g(t)\big]dt+dW(t),\ t\in[0,T],\\
 Y_1^{v_1,v_2}(0)&=0,
\end{aligned}
\right.
\end{equation}
\begin{equation}\label{observation l}
\left\{
\begin{aligned}
dY_2^{v_2}(t)&=\big[f_2(t)+v_2(t)\big]dt+dW(t),\ t\in[0,T],\\
 Y_2^{v_2}(0)&=0,
\end{aligned}
\right.
\end{equation}
respectively. Here $(v_1(\cdot),v_2(\cdot))\in\mathbb{R}^2$ are the control process pair of the follower and the leader, $(x^{v_1,v_2}(\cdot),Y_1^{v_1,v_2}(\cdot))\in\mathbb{R}^2$ are the state process and corresponding follower's observation process related to $(v_1(\cdot),v_2(\cdot))$, and $Y_2^{v_2}(\cdot)\in\mathbb{R}$ is the leader's observation process independent of the control $v_1(\cdot)$.

\begin{Remark}
Observing the leader's observation equation \eqref{observation l}, we can see that the drift term, which is free of $x^{v_1,v_2}$, is chosen by the leader, then it is easy to check that $\mathcal{F}_t^{Y_2^{v_2}}=\mathcal{F}_t^W$. In other words, the observation filtration is influenced by the control process $v_2(\cdot)$ in form, but the circular dependence between $Y_2^{v_2}(\cdot)$ and $v_2(\cdot)$ disappear because of the fact $v_2(\cdot)\in\mathcal{F}_t^{Y_2^{v_2}}=\mathcal{F}_t^W$. Moreover, due to the state decomposition in the following, \eqref{Y20} implies $\mathcal{F}_t^{Y_2^0}=\mathcal{F}_t^W$. Therefore, we get an important result that $\mathcal{F}_t^{Y_2^{v_2}}=\mathcal{F}_t^{Y_2^0}=\mathcal{F}_t^W$, which is one of the main conclusion used many times in the following discussion.
\end{Remark}

\begin{Remark}
We note that the leader's observation equation of $Y_2^{v_2}(\cdot)$ is not general enough. Indeed, we consider that the drift term of (\ref{observation l}) is not related to the state variable $x^{v_1,v_2}(\cdot)$, but is related to the leader's control variable $v_2(\cdot)$ directly. It is necessary to explain why we set the form and illustrate the difficulties with the general form. If we consider the general form of $Y_2(\cdot)$ as the following
\begin{equation}\label{another}
\left\{
\begin{aligned}
dY_2^{v_1,v_2}(t)&=\big[f_2(t)x^{v_1,v_2}(t)+g_2(t)\big]dt+dW(t),\ t\in[0,T],\\
  Y_2^{v_1,v_2}(0)&=0,
\end{aligned}
\right.
\end{equation}
and we separate $Y_2^{v_1,v_2}(\cdot)$ into two parts:
\begin{equation}\label{Y220}
\left\{
\begin{aligned}
dY_2^0(t)&=f_2(t)x^0(t)dt+dW(t),\ t\in[0,T],\\
 Y_2^0(0)&=0,
\end{aligned}
\right.
\end{equation}
and
\begin{equation}\label{Y221}
\left\{
\begin{aligned}
dY_2^1(t)&=\big[f_2(t)x^1(t)+g_2(t)\big]dt,\ t\in[0,T],\\
 Y_2^1(0)&=0.
\end{aligned}
\right.
\end{equation}
Observing the above equations \eqref{Y220} and \eqref{Y221}, and combining the separated equations \eqref{separationxY1Y2}, \eqref{x0}, \eqref{Y0}, \eqref{x1}, \eqref{Y1} in the following, two challenging problems arise naturally in Section 3.2. The first one is the appearance of another new $\mathcal{F}_t^{Y_2^{\bar{v}_1,v_2}}$-adapted Brownian motion $\widetilde{W}_2$ which is also called the innovation process when we calculate the filtering equations with respect to $\mathcal{F}_t^{Y_2^{\bar{v}_1,v_2}}$ in the leader's problem, which make the problem more complicated taking the $W,\bar{W},\widetilde{W}$ and $\widetilde{W}_2$ into account. The second one is that, under the assumption in {\bf Lemma 3.1}, we fail to guarantee that $\mathcal{F}_t^{Y_2^{\bar{v}_1,v_2}}=\mathcal{F}_t^{Y_2^0}$ holds which make it impossible for us to overcome the circular dependency between $v_2$ and $\mathcal{F}_t^{Y_2^{\bar{v}_1,v_2}}$.

From what has been discussed above, we consider the form in \eqref{observation l} which is convenient to study the leader's problem although the circulation still exists.
\end{Remark}

Now some assumptions are needed.

{\bf(A1)}
$A(\cdot),B_1(\cdot),B_2(\cdot),\alpha(\cdot),c(\cdot),\bar{c}(\cdot),f_1(\cdot),f_2(\cdot)$ and $g(\cdot)$ are uniformly bounded, deterministic functions. $x_0$ is a constant.

Under $\bf(A1)$, for any given $v_1(\cdot)\in L^2_\mathcal{F}(0,T;\mathbb{R})$ and $v_2(\cdot)\in L^2_\mathcal{F}(0,T;\mathbb{R})$, the SDE \eqref{LQsde} admits a unique solution $x^{v_1,v_2}(\cdot)\in L^2_\mathcal{F}(0,T;\mathbb{R})$.

For the LQ partially observed Stackelberg differential game problem, we firstly consider the optimal control problem of the follower with the given control $v_2(\cdot)$ of the leader. Therefore, it is natural to determine the control $v_1(\cdot)$ by the observation $Y_1^{v_1,v_2}(\cdot)$, which, however, depends on the controls $(v_1(\cdot),v_2(\cdot))$ via the state $x^{v_1,v_2}(\cdot)$ in \eqref{observation f}. In other words, $v_1(\cdot)$ has an effect on $Y_1^{v_1,v_2}(\cdot)$. The circular dependence between $Y_1^{v_1,v_2}$ and $v_1(\cdot)$ results in an intrinsic difficulty to study such a class of problems. In the following, we will apply the state decomposition technique \cite{Ben92,BV75} and backward separation technique \cite{WWX15} to overcome this obstacle.

We first separate the state $x^{v_1,v_2}(\cdot)$, the observations $Y_1^{v_1,v_2}(\cdot)$ and $Y_2^{v_2}(\cdot)$ into
\begin{equation}\label{separationxY1Y2}
\left\{
\begin{aligned}
&x^{v_1,v_2}(\cdot)=x^0(\cdot)+x^1(\cdot);\\
&Y_1^{v_1,v_2}(\cdot)=Y_1^0(\cdot)+Y_1^1(\cdot);\\
&Y_2^{v_2}(\cdot)=Y_2^0(\cdot)+Y_2^1(\cdot),
\end{aligned}
\right.
\end{equation}
where $x^0(\cdot), Y_1^0(\cdot)$ and $Y_2^0(\cdot)$ are independent of the control process $v_i(\cdot)(i=1,2)$. Let $v_1(\cdot)$ be adapted to $\sigma\{Y_1^{v_1,v_2}(s),0\leq s\leq t\}$ and $\sigma\{Y_1^{0}(s),0\leq s\leq t\}$.

We define the processes $x^0(\cdot), Y_1^0(\cdot)$ and $Y_2^0(\cdot)$ by
\begin{equation}\label{x0}
\left\{
\begin{aligned}
dx^0(t)&=A(t)x^0(t)dt+c(t)dW(t)+\bar{c}(t)d\bar{W}(t),\ t\in[0,T],\\
  x^0(0)&=x_0,
\end{aligned}
\right.
\end{equation}
\begin{equation}\label{Y0}
\left\{
\begin{aligned}
dY_1^0(t)&=f_1(t)x^0(t)dt+dW(t),\ t\in[0,T],\\
  Y_1^0(0)&=0,
\end{aligned}
\right.
\end{equation}
and
\begin{equation}\label{Y20}
\left\{
\begin{aligned}
dY_2^0(t)&=dW(t),\ t\in[0,T],\\
  Y_2^0(0)&=0.
\end{aligned}
\right.
\end{equation}
Then define the processes $x^1(\cdot), Y_1^1(\cdot)$ and $Y_2^1(\cdot)$ by
\begin{equation}\label{x1}
\left\{
\begin{aligned}
dx^1(t)&=\big[A(t)x^1(t)+B_1(t)v_1(t)+B_2(t)v_2(t)+\alpha(t)\big]dt,\ t\in[0,T],\\
  x^1(0)&=0,
\end{aligned}
\right.
\end{equation}
\begin{equation}\label{Y1}
\left\{
\begin{aligned}
dY_1^1(t)&=\big[f_1(t)x^1(t)+g(t)\big]dt,\ t\in[0,T],\\
  Y_1^1(0)&=0,
\end{aligned}
\right.
\end{equation}
and
\begin{equation}\label{Y21}
\left\{
\begin{aligned}
dY_2^1(t)&=\big[f_2(t)+v_2(t)\big]dt,\ t\in[0,T],\\
  Y_2^1(0)&=0,
\end{aligned}
\right.
\end{equation}
It is not hard to see that, under the assumption $\bf(A1)$, \eqref{x0}, \eqref{Y0}, \eqref{Y20}, \eqref{x1}, \eqref{Y1} and \eqref{Y21} admit unique solutions, respectively.

First, by Remark 2.1, the admissible control set of the leader is defined by
\begin{equation}\label{l0}
\begin{aligned}
\mathcal{U}_{ad}^{l}[0,T]=&\Big\{v_2(\cdot)\big|v_2(\cdot)\mbox{ is }\mathbb{R}\mbox{-valued and }\mathcal{F}_t^{Y_2^{v_2}}\mbox{-adapted process}\\
&\mbox{\ \ such that }\mathbb{E}\Big[\sup_{0\leq t\leq T}(v_2(t))^2\Big]<\infty\Big\}.
\end{aligned}
\end{equation}

\begin{definition}
A control process $v_2(\cdot)$ is called admissible for the leader, if $v_2(\cdot)\in\mathcal{U}_{ad}^{l}[0,T]$.
\end{definition}
In order to overcome the circular dependence between the control and the controlled filtration, a definition of the admissible control of the follower is given based on the separation processes.

Next, we consider the follower's problem. Set
\begin{equation}\label{f0}
\begin{aligned}
\mathcal{U}_{ad}^{f0}[0,T]:=&\Big\{v_1(\cdot)\big|v_1(\cdot)\mbox{ is }\mathbb{R}\mbox{-valued and }\mathcal{F}_t^{Y_1^0}\mbox{-adapted process}\\
&\mbox{\ \ such that }\mathbb{E}\Big[\sup_{0\leq t\leq T}(v_1(t))^2\Big]<\infty\Big\}.
\end{aligned}
\end{equation}
\begin{definition}
A control process $v_1(\cdot)$ is called admissible for the follower, for the given $v_2(\cdot)\in\mathcal{U}_{ad}^{l}[0,T]$, if $v_1(\cdot)\in\mathcal{U}_{ad}^{f0}[0,T]$ is $\mathcal{F}_t^{Y_1^{v_1,v_2}}$-adapted. And the set of these admissible controls is denoted by $\mathcal{U}_{ad}^f[0,T]$.
\end{definition}
In the game, the follower would like to choose an $\mathcal{F}_t^{Y_1^{\bar{v}_1,v_2}}$-adapted optimal control $\bar{v}_1(\cdot)\in\mathcal{U}_{ad}^f[0,T]$ such that his/her cost functional
\begin{equation}\label{LQ cost functional}
\begin{aligned}
J_1(v_1(\cdot),v_2(\cdot))&=\frac{1}{2}\mathbb{E}\bigg\{\int_0^T\big[L(t)(x^{v_1,v_2}(t))^2+R(t)v_1^2(t)+2l(t)x^{v_1,v_2}(t)+2r(t)v_1(t)\big]dt\\
&\qquad\qquad+M(x^{v_1,v_2}(T))^2+2mx^{v_1,v_2}(T)\bigg\}
\end{aligned}
\end{equation}
is minimized.

In the leader's game, knowing that the follower would take $\bar{v}_1(\cdot)\in\mathcal{U}_{ad}^f[0,T]$, the leader would like to choose an $\mathcal{F}_t^{Y_2^{\bar{v}_2}}$-adapted optimal control $\bar{v}_2(\cdot)\in\mathcal{U}_{ad}^l[0,T]$ such that his/her cost functional
\begin{equation}\label{LQ cost functional2}
\begin{aligned}
J_2(v_2(\cdot))\equiv J_2(\bar{v}_1(\cdot),v_2(\cdot))&=\frac{1}{2}\mathbb{E}\bigg\{\int_0^T\big[\bar{L}(t)(x^{v_2}(t))^2+\bar{R}(t)v_2^2(t)+2\bar{l}(t)x^{v_2}(t)\\
&\qquad\qquad+2\bar{r}(t)v_2(t)\big]dt+\bar{M}(x^{v_2}(T))^2+2\bar{m}x^{v_2}(T)\bigg\}
\end{aligned}
\end{equation}
is minimized. The following assumption is introduced necessarily.

{\bf(A2)} $L(\cdot)\geq0,\bar{L}(\cdot)\geq0,R(\cdot)\geq0,\bar{R}(\cdot)\geq0,l(\cdot),\bar{l}(\cdot),r(\cdot)$ and $\bar{r}(\cdot)$ are uniformly bounded, deterministic functions. $M\geq0,\bar{M}\geq0$ and $m,\bar{m}$ are constants.

Similar to the idea in Section 2.3.2 of \cite{Ben92}, we give a more restrictive definition of admissibility, which is helpful to overcome the circulation and to apply the filtering formula in the following procedure. More precisely, we consider the admissible control $v_1(\cdot)$ is adapted both to the filtration $\mathcal{F}_t^{Y_1^{v_1,v_2}}$ and to the uncontrolled filtration $\mathcal{F}_t^{Y_1^0}$.

\begin{Remark}
When it comes to the partially observed problem, there is often a misleading for the filtrations. To be specific, we obtain an uncontrolled state equation and observation equation after using state decomposition technique, that is, the equations \eqref{x0} and \eqref{Y0}. By Lemma 3.1 in the next section, we have $\mathcal{F}_t^{Y_1^{v_1,v_2}}=\mathcal{F}_t^{Y_1^0}$. In this case, the misunderstanding is that one will think that $\mathcal{F}_t^{Y_1^0}=\mathcal{F}_t^{W,x^0}=\mathcal{F}_t^{W,\bar{W}}$, which means that the controller seems to know the complete information, and the observation equation, to some degree, loses its original meaning. The misleading result is mainly due to the misunderstanding for observation equations. Indeed, we have to know, in fact, that the state equation, $x(\cdot)=x^1(\cdot)+x^0(\cdot)$, can not be observed directly although we will think of $\mathcal{F}_t^{x^0}=\mathcal{F}_t^{W,\bar{W}}$ in the mathematical form of \eqref{x0}, that is, we do not know any information about $x^{v_1,v_2}(x^0)$, thus we should only focus on the information $\mathcal{F}_t^{Y_1^{v_1,v_2}}$ instead of $\mathcal{F}_t^{x^0}$.

Therefore, we will correct the meaning of observation equation in the partially observed problem. We have to emphasize that the state equation can not be observed directly, therefore, by follower's observation equation, the right hand of \eqref{observation f}(\eqref{Y0}) is unknown because of the existence of state variable $x^{v_1,v_2}(x^0)$, and it is unreasonable to consider the information generated by the right hand of \eqref{observation f}(\eqref{Y0}), but we can obtain the partial information by the observation process $Y_1(Y_1^0)$, which is the left hand of \eqref{observation f}(\eqref{Y0}). In other word, the observation process is the only channel for us to partially offer information which can not be observed by the right hand of \eqref{observation f}(\eqref{Y0}). Moreover, due to the absence of state variable $x(\cdot)$ in the drift term of the leader's observation equation \eqref{observation l}, the drift term is observable because it only contains $v_2(\cdot)\in\mathcal{F}_t^{Y_2^{v_2}}$. So we can get $\mathcal{F}_t^{Y_2^{v_2}}=\mathcal{F}_t^W$ by connecting the left hand and right hand of \eqref{observation l}. Now, we can find that the essential difference between \eqref{observation f} and \eqref{observation l} is the existence of state variable $x(\cdot)$. In conclusion, we can only concentrate on the observation process $Y_1(\cdot)(Y_1^0(\cdot))$ in the case of unobservable drift, and it is noteworthy that there is no relation between both sides of \eqref{observation f}(\eqref{Y0}), that is, the only information available to the follower root in $Y_1(\cdot)(Y_1^0(\cdot))$ and do not mix $Y_1(\cdot)(Y_1^0(\cdot))$ and right hand of \eqref{observation f}(\eqref{Y0}). But in the case of observable drift which is usually adapted to filtration generated by the observation process itself, it is rigorous to obtain explicit expression of $\mathcal{F}_t^{Y_2^{v_2}}$ by combining $Y_2(\cdot)$ and the drift term in the right hand of \eqref{observation l}.

However, if the observation equations are considered in the following case:
\begin{equation}
\begin{aligned}
&dY_1(t)=v_1(t)dt+dW(t),\\
&dY_2(t)=v_2(t)dt+dW(t),
\end{aligned}
\end{equation}
where both the drifts are observable for the follower and the leader. By the same analysis in Remark 2.1, it is easy to verify that $\mathcal{F}_t^{Y_1}=\mathcal{F}_t^W=\mathcal{F}_t^{Y_2}$. In this case, we can understand that the informations that they can observe are just the filtration generated by some Brownian motion in the state equation. It is equivalent to recognize that both of them can directly observe the state equation partially. In particular, if  we consider the case that
\begin{equation}
\begin{aligned}
&dY_1(t)=dW(t),\\
&dY_2(t)=dW(t),
\end{aligned}
\end{equation}
it can also be understood as we analyzed above. The similar situation can be referred to \cite{SWX17}, which is related to the partial information problem. However, it should be stressed that the partial information question is quite different form the partially observed one.

Overall, we conclude that partial information problem can be regraded as a kind of direct partial observation, instead, partially observed problem can be treated as a kind of indirect partial observation.
\end{Remark}

\begin{Remark}
In the observation equations of $Y_1(\cdot)$ and $Y_2(\cdot)$, one can also consider that the driven noises are not the same, that is, it is natural to consider the case:
\begin{equation}\label{observation ff}
\left\{
\begin{aligned}
dY_1^{v_1,v_2}(t)&=\big[f_1(t)x^{v_1,v_2}(t)+g(t)\big]dt+dW(t),\ t\in[0,T],\\
 Y_1^{v_1,v_2}(0)&=0,
\end{aligned}
\right.
\end{equation}
\begin{equation}\label{observation ll}
\left\{
\begin{aligned}
dY_2^{v_2}(t)&=\big[f_2(t)+v_2(t)\big]dt+d\bar{W}(t),\ t\in[0,T],\\
 Y_2^{v_2}(0)&=0.
\end{aligned}
\right.
\end{equation}
Actually, the kind of setting can also be suitable to our next results, and will not have any intrinsic difference, so we omit it and leave it to the interested reader.
\end{Remark}

Let us conclude the above formulation with the following definition, and the problem studied in this paper can be proposed in the following.
\begin{definition}
The pair $(\bar{v}_1(\cdot),\bar{v}_2(\cdot))\in\mathcal{U}_{ad}^f[0,T]\times\mathcal{U}_{ad}^l[0,T]$ is called a Stackelberg equilibrium point to the LQ partially observed Stackelberg stochastic differential game, if it satisfies the following condition:\\
(1)\ For any $v_2(\cdot)\in\mathcal{U}_{ad}^l[0,T]$, there exists a map $\Gamma:\mathcal{U}_{ad}^l[0,T]\rightarrow\mathcal{U}_{ad}^f[0,T]$ such that
\begin{equation*}
J_1(\Gamma(v_2(\cdot)),v_2(\cdot))=\min_{v_1(\cdot)\in\mathcal{U}_{ad}^f[0,T]}J_1(v_1(\cdot),v_2(\cdot)).
\end{equation*}
(2)\ There exists a unique $\bar{v}_2(\cdot)\in\mathcal{U}_{ad}^l[0,T]$ such that
\begin{equation*}
J_2(\Gamma(\bar{v}_2(\cdot)),\bar{v}_2(\cdot))=\min_{v_2(\cdot)\in\mathcal{U}_{ad}^l[0,T]}J_2(\Gamma(v_2(\cdot)),v_2(\cdot)).
\end{equation*}
(3)\ The optimal strategy of the follower is $\bar{v}_1(\cdot)=\Gamma(\bar{v}_2(\cdot))$.
\end{definition}

\section{Main Result}

In this section, we aim to study the LQ partially observed Stackelberg stochastic differential game problem in Definition 2.3.

\subsection{Optimization for The Follower}

First, it is vital to obtain the following lemmas from Definition 2.2, similarly to \cite{WWX15}.
\begin{lemma}\label{FollowerLemma}
Given $v_2(\cdot)\in\mathcal{U}_{ad}^l[0,T]$, for any $v_1(\cdot)\in\mathcal{U}_{ad}^f[0,T]$ and $\mathcal{F}_t^{Y_2^{v_2}}\subseteq(\mathcal{F}_t^{Y_1^{v_1,v_2}}\land\mathcal{F}_t^{Y_1^0})$, then $\mathcal{F}_t^{Y_1^{v_1,v_2}}=\mathcal{F}_t^{Y_1^0}$.
\end{lemma}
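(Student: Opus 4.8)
The plan is to show the two filtrations $\mathcal{F}_t^{Y_1^{v_1,v_2}}$ and $\mathcal{F}_t^{Y_1^0}$ coincide by establishing mutual inclusion, exploiting the additive decomposition $Y_1^{v_1,v_2}(\cdot)=Y_1^0(\cdot)+Y_1^1(\cdot)$ from \eqref{separationxY1Y2}. The key observation is that $Y_1^1(\cdot)$ solves the ODE \eqref{Y1}, whose drift is $f_1(t)x^1(t)+g(t)$, and $x^1(\cdot)$ solves \eqref{x1}, whose drift involves $v_1(\cdot)$ and $v_2(\cdot)$ but no noise at all — so $Y_1^1(\cdot)$ is a pathwise-continuous, finite-variation process that is a deterministic (measurable) functional of the two control paths. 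Thus, as soon as one knows that $v_1(\cdot)$ and $v_2(\cdot)$ are adapted to the filtration one is conditioning on, the correction term $Y_1^1(\cdot)$ is ``known'' and can be added to or subtracted from $Y_1^0(\cdot)$ to pass between the two observation processes.

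First I would prove $\mathcal{F}_t^{Y_1^0}\subseteq\mathcal{F}_t^{Y_1^{v_1,v_2}}$. By Definition 2.2, $v_1(\cdot)\in\mathcal{U}_{ad}^{f0}[0,T]$, i.e. $v_1(\cdot)$ is $\mathcal{F}_t^{Y_1^0}$-adapted. I would run a Gronwall / Picard-iteration argument on the coupled system \eqref{x1}–\eqref{Y1}: since these are (random) ODEs driven only by $v_1,v_2$, the solution $(x^1(t),Y_1^1(t))$ is, for each $t$, a measurable functional of $(v_1(s),v_2(s):0\le s\le t)$, hence $\mathcal{F}_t^{Y_1^1}\subseteq\mathcal{F}_t^{v_1}\vee\mathcal{F}_t^{v_2}\subseteq\mathcal{F}_t^{Y_1^0}\vee\mathcal{F}_t^{Y_2^{v_2}}\subseteq\mathcal{F}_t^{Y_1^0}$, where the last inclusion uses the hypothesis $\mathcal{F}_t^{Y_2^{v_2}}\subseteq\mathcal{F}_t^{Y_1^0}$. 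Therefore $Y_1^{v_1,v_2}(t)=Y_1^0(t)+Y_1^1(t)$ is $\mathcal{F}_t^{Y_1^0}$-measurable for every $t$, which gives the inclusion.

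Conversely, for $\mathcal{F}_t^{Y_1^{v_1,v_2}}\subseteq\mathcal{F}_t^{Y_1^0}$, the same structural fact applies with the roles swapped: by Definition 2.2 the admissible $v_1(\cdot)$ is also $\mathcal{F}_t^{Y_1^{v_1,v_2}}$-adapted, and by hypothesis $\mathcal{F}_t^{Y_2^{v_2}}\subseteq\mathcal{F}_t^{Y_1^{v_1,v_2}}$, so again $(x^1(t),Y_1^1(t))$ is a measurable functional of $(v_1(s),v_2(s):0\le s\le t)$ and hence $\mathcal{F}_t^{Y_1^1}\subseteq\mathcal{F}_t^{Y_1^{v_1,v_2}}$; then $Y_1^0(t)=Y_1^{v_1,v_2}(t)-Y_1^1(t)$ is $\mathcal{F}_t^{Y_1^{v_1,v_2}}$-measurable. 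Combining the two inclusions (and taking the usual $\mathbb{P}$-augmentation, under which both filtrations are right-continuous) yields $\mathcal{F}_t^{Y_1^{v_1,v_2}}=\mathcal{F}_t^{Y_1^0}$ for all $t\in[0,T]$.

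I expect the main obstacle to be the bookkeeping around the circular dependence: $v_1(\cdot)$ is required to be adapted to $\mathcal{F}_t^{Y_1^{v_1,v_2}}$, which is the very filtration we are trying to identify, so the argument must be arranged so that it does not assume what it proves. The clean way is to notice that adaptedness of $v_1$ to \emph{either} candidate filtration, together with $\mathcal{F}_t^{Y_2^{v_2}}$ sitting below both (the lemma's hypothesis $\mathcal{F}_t^{Y_2^{v_2}}\subseteq(\mathcal{F}_t^{Y_1^{v_1,v_2}}\wedge\mathcal{F}_t^{Y_1^0})$), makes the finite-variation correction $Y_1^1$ measurable with respect to that same filtration — the two halves of the proof are genuinely symmetric and never need the conclusion as an input. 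A secondary technical point is justifying measurable dependence of the ODE solution on the driving paths uniformly in $\omega$; this is standard (Carathéodory / Gronwall) given the boundedness in (A1) and the $L^2$-bounds on the controls, so I would not belabor it.
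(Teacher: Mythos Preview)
Your proposal is correct and follows essentially the same argument as the paper: establish both inclusions via the additive decomposition $Y_1^{v_1,v_2}=Y_1^0+Y_1^1$, using that $x^1$ and $Y_1^1$ are pathwise ODE solutions determined by $(v_1,v_2)$, and that Definition~2.2 together with the hypothesis $\mathcal{F}_t^{Y_2^{v_2}}\subseteq\mathcal{F}_t^{Y_1^{v_1,v_2}}\wedge\mathcal{F}_t^{Y_1^0}$ makes both controls adapted to each candidate filtration. One cosmetic slip: you have the inclusion labels swapped in both paragraphs---showing $Y_1^{v_1,v_2}$ is $\mathcal{F}_t^{Y_1^0}$-measurable gives $\mathcal{F}_t^{Y_1^{v_1,v_2}}\subseteq\mathcal{F}_t^{Y_1^0}$, and showing $Y_1^0$ is $\mathcal{F}_t^{Y_1^{v_1,v_2}}$-measurable gives the reverse---but the arguments themselves are right and together yield the claimed equality.
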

\begin{proof}
For any $v_1(\cdot)\in\mathcal{U}_{ad}^f[0,T]$, on one hand, since $v_1(\cdot)$ is $\mathcal{F}_t^{Y_1^0}$-adapted and $v_2(\cdot)$ is $\mathcal{F}_t^{Y_2^{v_2}}(\subseteq\mathcal{F}_t^{Y_1^0})$-adapted, it follows from \eqref{x1} that $x^1(\cdot)$ is $\mathcal{F}_t^{Y_1^0}$-adapted, so is $Y_1^1(\cdot)$. Then $Y_1^{v_1,v_2}(\cdot)=Y_1^0(\cdot)+Y_1^1(\cdot)$ is $\mathcal{F}_t^{Y_1^0}$-adapted, i.e., $\mathcal{F}_t^{Y_1^{v_1,v_2}}\subseteq\mathcal{F}_t^{Y_1^0}$.

On the other hand, since $v_1(\cdot)$ is $\mathcal{F}_t^{Y_1^{v_1,v_2}}$-adapted and $v_2(\cdot)$ is $\mathcal{F}_t^{Y_2^{v_2}}(\subseteq\mathcal{F}_t^{Y_1^{v_1,v_2}})$-adapted, then it follows from \eqref{x1} that $x^1(\cdot)$ is $\mathcal{F}_t^{Y_1^{v_1,v_2}}$-adapted, so is $Y_1^1(\cdot)$. Then $Y_1^0(\cdot)=Y_1^{v_1,v_2}(\cdot)-Y_1^1(\cdot)$ is $\mathcal{F}_t^{Y_1^{v_1,v_2}}$-adapted, i.e., $\mathcal{F}_t^{Y_1^0}\subseteq\mathcal{F}_t^{Y_1^{v_1,v_2}}$. The proof is complete.
\end{proof}

\begin{Remark}
In the theoretical aspect, different from \cite{WWX15}, we add the condition $\mathcal{F}_t^{Y_2^{v_2}}\subseteq(\mathcal{F}_t^{Y_1^{v_1,v_2}}\land\mathcal{F}_t^{Y_1^0})$ in Lemma 3.1, which is nontrivial for the technical need under the leader-follower's framework. In the practical application aspect, the information available to the leader is less than that available to the follower is a common phenomenon in many examples. In fact, the dynamic cooperative advertising problem in Section 4 is a motivation for us to study the LQ partially observed Stackelberg stochastic differential game problem.
\end{Remark}

\begin{lemma}
Under {\bf (A1)} and {\bf (A2)}, for given $v_2(\cdot)\in\mathcal{U}_{ad}^l[0,T]$, we have
\begin{equation}\label{f and f0}
\min_{v_1'(\cdot)\in\mathcal{U}_{ad}^f[0,T]}J_1(v_1'(\cdot),v_2(\cdot))=\min_{v_1(\cdot)\in\mathcal{U}_{ad}^{f0}[0,T]}J_1(v_1(\cdot),v_2(\cdot)).
\end{equation}
\end{lemma}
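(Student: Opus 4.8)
The plan is to prove the two inequalities in \eqref{f and f0} separately. One direction is immediate; the other rests on the fact that the optimization on the right, over the \emph{control-free} class $\mathcal{U}_{ad}^{f0}[0,T]$, is a genuinely well-posed partially observed LQ problem whose minimizer turns out, a posteriori, to be admissible in the stronger sense of Definition 2.2.

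\emph{Step 1.} By Definition 2.2 every $v_1(\cdot)\in\mathcal{U}_{ad}^f[0,T]$ belongs to $\mathcal{U}_{ad}^{f0}[0,T]$, i.e. $\mathcal{U}_{ad}^f[0,T]\subseteq\mathcal{U}_{ad}^{f0}[0,T]$, so
\[
\inf_{v_1'(\cdot)\in\mathcal{U}_{ad}^f[0,T]}J_1(v_1'(\cdot),v_2(\cdot))\geq\inf_{v_1(\cdot)\in\mathcal{U}_{ad}^{f0}[0,T]}J_1(v_1(\cdot),v_2(\cdot)).
\]
Moreover, for fixed $v_2(\cdot)$ the map $v_1(\cdot)\mapsto J_1(v_1(\cdot),v_2(\cdot))$ on $\mathcal{U}_{ad}^{f0}[0,T]$ is the cost of a standard partially observed LQ problem: by \eqref{separationxY1Y2} and \eqref{x1} the state depends on $v_1(\cdot)$ through a pathwise linear ODE, and the observation $Y_1^0(\cdot)$ in \eqref{Y0} is control-free, so $\mathcal{U}_{ad}^{f0}[0,T]$ does not move with $v_1(\cdot)$. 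Under \textbf{(A1)}--\textbf{(A2)} the functional is convex, coercive and lower semicontinuous, so the right-hand infimum is attained at some $\bar v_1(\cdot)\in\mathcal{U}_{ad}^{f0}[0,T]$, which is $\mathcal{F}_t^{Y_1^0}$-adapted.

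\emph{Step 2.} The crux is to show $\bar v_1(\cdot)\in\mathcal{U}_{ad}^f[0,T]$, i.e. that $\bar v_1(\cdot)$ is also $\mathcal{F}_t^{Y_1^{\bar v_1,v_2}}$-adapted; then $\inf_{\mathcal{U}_{ad}^f}J_1\le J_1(\bar v_1(\cdot),v_2(\cdot))=\min_{\mathcal{U}_{ad}^{f0}}J_1$, which together with Step 1 gives \eqref{f and f0} (and shows both infima are minima). Following the reasoning of Lemma \ref{FollowerLemma}, it suffices to prove $\mathcal{F}_t^{Y_1^{\bar v_1,v_2}}=\mathcal{F}_t^{Y_1^0}$. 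The inclusion ``$\subseteq$'' is the easy half: $\bar v_1(\cdot)$ and, by the standing hypothesis, $v_2(\cdot)$ are $\mathcal{F}_t^{Y_1^0}$-adapted, hence by \eqref{x1} so are $x^1(\cdot)$ and $Y_1^1(\cdot)$, so $Y_1^{\bar v_1,v_2}(\cdot)=Y_1^0(\cdot)+Y_1^1(\cdot)$ is $\mathcal{F}_t^{Y_1^0}$-adapted. For ``$\supseteq$'' I would recover $Y_1^0(\cdot)$ from $Y_1^{\bar v_1,v_2}(\cdot)$: once the feedback form $\bar v_1(t)=F(t)\hat{x}^0(t)+h(t)$ of the optimal control ---with $\hat{x}^0(t)=\mathbb{E}[x^0(t)|\mathcal{F}_t^{Y_1^0}]$ and $F,h$ deterministic, arising from the FBSDFE-and-Riccati representation of Section 3.1--- is in hand, $\hat{x}^0(\cdot)$ is a linear functional of $Y_1^0(\cdot)$ and $x^1(\cdot)$ a linear Volterra functional of $(\bar v_1(\cdot),v_2(\cdot))$, so the identity $Y_1^0(t)=Y_1^{\bar v_1,v_2}(t)-\int_0^t(f_1(s)x^1(s)+g(s))\,ds$ becomes a fixed-point equation for $Y_1^0(\cdot)$ with data $Y_1^{\bar v_1,v_2}(\cdot)$; its unique solvability (a contraction on a short interval, then concatenation, thanks to the time-integral smoothing in $Y_1^1(\cdot)$) yields $\mathcal{F}_t^{Y_1^0}\subseteq\mathcal{F}_t^{Y_1^{\bar v_1,v_2}}$, hence equality.

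\emph{Main obstacle.} The genuinely delicate point is Step 2: the self-referential dependence of $\bar v_1(\cdot)$ on the very filtration $\mathcal{F}_t^{Y_1^{\bar v_1,v_2}}$ it generates. This is precisely why one passes to the relaxed, control-free class $\mathcal{U}_{ad}^{f0}[0,T]$ (the backward-separation idea of \cite{WWX15}, in the spirit of \cite{Ben92}): optimization and filtering are performed there, and consistency with the original class $\mathcal{U}_{ad}^f[0,T]$ is restored only afterwards, using the explicit feedback form of the optimum together with Lemma \ref{FollowerLemma}. The careful bookkeeping of the filtrations generated by $Y_1^0(\cdot)$, $Y_1^{\bar v_1,v_2}(\cdot)$ and $Y_2^{v_2}(\cdot)$, all under the standing assumption $\mathcal{F}_t^{Y_2^{v_2}}\subseteq\mathcal{F}_t^{Y_1^0}$ of Lemma \ref{FollowerLemma}, is where the substance lies.
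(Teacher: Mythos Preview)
Your approach is the one the paper itself invokes by citing \cite{WWX15}: use $\mathcal{U}_{ad}^f\subseteq\mathcal{U}_{ad}^{f0}$ for one inequality, then show that the $\mathcal{U}_{ad}^{f0}$-minimizer actually belongs to $\mathcal{U}_{ad}^f$ for the other. Step~1 is clean and correct.

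There is, however, a genuine ordering issue in Step~2 as written. You appeal to the feedback representation $\bar v_1(t)=F(t)\hat x^0(t)+h(t)$ obtained from the Riccati/FBSDFE analysis of Section~3.1, but in the paper's logical flow that representation is derived \emph{after} Lemma~3.2: Theorem~3.1 explicitly invokes Lemma~3.2 to transfer the optimization to $\mathcal{U}_{ad}^{f0}$ before the variational and filtering arguments are carried out. Using the feedback form to prove Lemma~3.2 is therefore circular as the paper is organized. The fix is exactly what the backward-separation philosophy prescribes, but it must be made explicit: first solve the LQ problem directly over $\mathcal{U}_{ad}^{f0}$ (this is legitimate and requires no appeal to Lemma~3.2, since $\mathcal{F}_t^{Y_1^0}$ is control-free and the spike variation plus decoupling of Section~3.1 go through verbatim with $\mathcal{F}_t^{Y_1^0}$ in place of $\mathcal{F}_t^{Y_1^{\bar v_1,v_2}}$), extract the linear feedback form of the minimizer there, and only then run your Volterra fixed-point argument to recover $Y_1^0$ from $Y_1^{\bar v_1,v_2}$ and conclude $\mathcal{F}_t^{Y_1^0}\subseteq\mathcal{F}_t^{Y_1^{\bar v_1,v_2}}$. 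Once you restructure in this order, the argument is sound and coincides with the route the paper points to via \cite{WWX15}.
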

\begin{proof}
It is similar to Lemma 2.3 of \cite{WWX15} and we leave the details to the interested readers.
\end{proof}

\subsubsection{Necessary and Sufficient Conditions}

We first establish a necessary condition and then a sufficient one for optimality of the follower's problem.
From Lemma 3.2, the follower's problem can be solved in $\mathcal{U}_{ad}^{f0}[0,T]$, where the control process is adapted to a uncontrolled filtration, by the variational method.
\begin{theorem}
Let {\bf (A1)} and  {\bf (A2)} hold and $x_0\in\mathbb{R}$. Giving the leader's strategy $v_2(\cdot)\in\mathcal{U}_{ad}^l[0,T]$, let $\bar{v}_1(\cdot)\in\mathcal{U}_{ad}^f[0,T]$ be an optimal control of the follower and $x^{\bar{v}_1,v_2}(\cdot)$ be the corresponding state trajectory of (\ref{LQsde}). Then we have
\begin{equation}\label{optimal control-follower}
\begin{aligned}
R(t)\bar{v}_1(t)+r(t)+B_1(t)\mathbb{E}\big[p(t)\big|\mathcal{F}_t^{Y_1^{\bar{v}_1,v_2}}\big]=0,\ a.e.t\in[0,T],\ a.s.,
\end{aligned}
\end{equation}
and the adjoint process triple $(p(\cdot),k(\cdot),\bar{k}(\cdot))$ satisfies the following BSDE:
\begin{equation}\label{adjoint eq}
\left\{
\begin{aligned}
-dp(t)&=\big[A(t)p(t)+L(t)x^{\bar{v}_1,v_2}(t)+l(t)\big]dt-k(t)dW(t)-\bar{k}(t)d\bar{W}(t),\ t\in[0,T],\\
  p(T)&=Mx^{\bar{v}_1,v_2}(T)+m.
\end{aligned}
\right.
\end{equation}
\end{theorem}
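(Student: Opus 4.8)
The plan is to derive the necessary condition via a standard convex (spike/linear) perturbation argument, exploiting the fact that, by Lemma 3.2, the follower's problem may be solved over the admissible class $\mathcal{U}_{ad}^{f0}[0,T]$, where controls are adapted to the \emph{uncontrolled} filtration $\mathcal{F}_t^{Y_1^0}$ and the cost is quadratic with a convex (indeed strictly convex in $v_1$ when $R(\cdot)>0$, and convex when $R(\cdot)\ge 0$) structure. Fix the leader's control $v_2(\cdot)\in\mathcal{U}_{ad}^l[0,T]$ and let $\bar v_1(\cdot)$ be optimal with state $x^{\bar v_1,v_2}(\cdot)$. For any $v_1(\cdot)\in\mathcal{U}_{ad}^{f0}[0,T]$ and $\varepsilon\in[0,1]$, set $v_1^\varepsilon(\cdot)=\bar v_1(\cdot)+\varepsilon\big(v_1(\cdot)-\bar v_1(\cdot)\big)$, which is again in $\mathcal{U}_{ad}^{f0}[0,T]$ since that set is convex. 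Because the state equation \eqref{LQsde} is linear in $(v_1,v_2)$ and has control-free diffusion, the corresponding state $x^{v_1^\varepsilon,v_2}(\cdot)=x^{\bar v_1,v_2}(\cdot)+\varepsilon X^1(\cdot)$ exactly, where $X^1(\cdot)$ solves the variational (ODE-type, in fact its own SDE with zero diffusion) equation
\begin{equation*}
\left\{
\begin{aligned}
dX^1(t)&=\big[A(t)X^1(t)+B_1(t)\big(v_1(t)-\bar v_1(t)\big)\big]dt,\ t\in[0,T],\\
X^1(0)&=0.
\end{aligned}
\right.
\end{equation*}

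Next I would expand $J_1(v_1^\varepsilon(\cdot),v_2(\cdot))$ in $\varepsilon$; since the functional is quadratic the expansion is exact, and optimality of $\bar v_1(\cdot)$ gives $\frac{d}{d\varepsilon}J_1(v_1^\varepsilon,v_2)\big|_{\varepsilon=0^+}\ge 0$. This yields the first-order variational inequality
\begin{equation*}
\mathbb{E}\Big\{\int_0^T\big[\big(L(t)x^{\bar v_1,v_2}(t)+l(t)\big)X^1(t)+\big(R(t)\bar v_1(t)+r(t)\big)\big(v_1(t)-\bar v_1(t)\big)\big]dt+\big(Mx^{\bar v_1,v_2}(T)+m\big)X^1(T)\Big\}\ge 0.
\end{equation*}
To eliminate $X^1(\cdot)$ I introduce the adjoint triple $(p(\cdot),k(\cdot),\bar k(\cdot))$ as the solution of the linear BSDE \eqref{adjoint eq}, whose well-posedness in $L^2_\mathcal{F}(0,T;\mathbb{R})$ follows from (A1), (A2) and standard BSDE theory. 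Applying It\^o's formula to $p(t)X^1(t)$ on $[0,T]$, taking expectations (the stochastic integrals have zero mean since $X^1$ is bounded in $L^2$ and $k,\bar k\in L^2_\mathcal{F}$), the drift terms involving $A(t)p(t)X^1(t)$ cancel and the terminal/running data of \eqref{adjoint eq} reproduce exactly the $X^1$-linear terms above, leaving
\begin{equation*}
\mathbb{E}\Big\{\int_0^T\big(Mx^{\bar v_1,v_2}(T)+m\big)X^1(T)+\big(L(t)x^{\bar v_1,v_2}(t)+l(t)\big)X^1(t)\,dt\Big\}=\mathbb{E}\int_0^T B_1(t)p(t)\big(v_1(t)-\bar v_1(t)\big)\,dt.
\end{equation*}
Substituting this back, the variational inequality collapses to $\mathbb{E}\int_0^T\big[R(t)\bar v_1(t)+r(t)+B_1(t)p(t)\big]\big(v_1(t)-\bar v_1(t)\big)\,dt\ge 0$ for all $v_1(\cdot)\in\mathcal{U}_{ad}^{f0}[0,T]$.

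Finally, since $\mathcal{U}_{ad}^{f0}[0,T]$ is a linear (convex, symmetric) space of $\mathcal{F}_t^{Y_1^0}$-adapted processes, replacing $v_1-\bar v_1$ by $\pm(v_1-\bar v_1)$ turns the inequality into the equality $\mathbb{E}\int_0^T\big[R(t)\bar v_1(t)+r(t)+B_1(t)p(t)\big]\xi(t)\,dt=0$ for every $\mathcal{F}_t^{Y_1^0}$-adapted square-integrable $\xi(\cdot)$. Since $\bar v_1(\cdot)$ is $\mathcal{F}_t^{Y_1^0}$-adapted and, by Lemma 3.1, $\mathcal{F}_t^{Y_1^0}=\mathcal{F}_t^{Y_1^{\bar v_1,v_2}}$, conditioning on $\mathcal{F}_t^{Y_1^{\bar v_1,v_2}}$ and using the arbitrariness of $\xi(\cdot)$ (a standard measurable-selection / fundamental-lemma-of-variational-calculus argument) gives $R(t)\bar v_1(t)+r(t)+B_1(t)\mathbb{E}\big[p(t)\,\big|\,\mathcal{F}_t^{Y_1^{\bar v_1,v_2}}\big]=0$ for a.e. $t$, a.s., which is \eqref{optimal control-follower}. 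The sufficiency direction (not needed for this statement but typically paired with it) would run the same $\varepsilon$-expansion and observe that the second-order term is $\mathbb{E}\int_0^T\big(L(t)(X^1)^2+R(t)(v_1-\bar v_1)^2\big)dt+\mathbb{E}\,M(X^1(T))^2\ge 0$ under (A2), so the first-order condition is also sufficient. The main obstacle is the bookkeeping around filtrations: one must be careful that the variational class is exactly $\mathcal{U}_{ad}^{f0}[0,T]$ (uncontrolled filtration), invoke Lemma 3.1 to identify $\mathcal{F}_t^{Y_1^0}$ with $\mathcal{F}_t^{Y_1^{\bar v_1,v_2}}$, and justify that conditioning $p(t)$ on the observation filtration is legitimate — i.e. that $R(t)\bar v_1(t)+r(t)$ is already $\mathcal{F}_t^{Y_1^{\bar v_1,v_2}}$-measurable so only $B_1(t)p(t)$ needs to be projected; everything else is routine It\^o calculus and BSDE estimates.
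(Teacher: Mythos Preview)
Your argument is correct and complete; it simply uses a different variation from the paper. The paper employs a \emph{spike variation}: for $\tau\in[0,T)$ and small $\epsilon>0$, it sets $v_1^\epsilon(t)=w_1$ on $[\tau,\tau+\epsilon]$ and $v_1^\epsilon(t)=\bar v_1(t)$ elsewhere, with $w_1$ a bounded $\mathcal{F}_\tau^{Y_1^{\bar v_1,v_2}}$-measurable random variable, then introduces the variational equation, proves the estimates $\sup_t\mathbb{E}|x_1(t)|^2\le C\epsilon^2$ etc., applies It\^o to $p(\cdot)x_1(\cdot)$, and passes to the limit $\epsilon\downarrow0$ to obtain \eqref{optimal control-follower} pointwise in $t$. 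You instead exploit the LQ structure directly via the convex perturbation $v_1^\varepsilon=\bar v_1+\varepsilon(v_1-\bar v_1)$, which yields an \emph{exact} expansion of both state and cost and lets you replace $v_1-\bar v_1$ by its negative to turn the inequality into an equality without any $o(\epsilon)$ bookkeeping. Your route is shorter and avoids all the intermediate estimates; the paper's spike-variation route is in line with the claim in its abstract that the control set need not be convex, and would survive if $\mathcal{U}_{ad}^{f0}[0,T]$ were replaced by a non-convex constraint set, whereas your argument relies essentially on $\mathcal{U}_{ad}^{f0}[0,T]$ being a linear space. Both proofs invoke Lemma~3.1 and Lemma~3.2 in the same way to pass from $\mathcal{F}_t^{Y_1^0}$ to $\mathcal{F}_t^{Y_1^{\bar v_1,v_2}}$ at the end.
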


\begin{proof}
In the following, we use the spike variation technique to optimal control $\bar{v}_1(\cdot)$. For some $\epsilon>0$ and $\tau\in[0,T)$, define
\[
v_1^\epsilon(t)=
\begin{cases}
w_1,&\text{$\tau\leq t\leq\tau+\epsilon$},\\
\bar{v}_1(t),&\text{otherwise},
\end{cases}
\]
where $w_1$ is an arbitrary $\mathbb{R}$-valued, $\mathcal{F}_\tau^{Y_1^{\bar{v}_1,v_2}}$-measurable random variable such that $\sup\limits_{\omega\in\Omega}|w_1(\omega)|<\infty$. Let $x^{v_1^\epsilon}(\cdot)$ be the state trajectory of (\ref{LQsde}) corresponding to $v_1^\epsilon(\cdot)$. From Lemma 3.2, $J_1(\bar{v}_1(\cdot),v_2(\cdot))=\min_{v_1(\cdot)\in\mathcal{U}_{ad}^{f0}[0,T]}J_1(v_1(\cdot),v_2(\cdot))$.

We can choose $w_1\in\mathcal{U}_{ad}^{f0}[0,T]$, and introduce the following variational equation:
\begin{equation}\label{variational eq}
\left\{
\begin{aligned}
dx_1(t)&=\big[A(t)x_1(t)+B_1(t)(w_1-\bar{v}_1(t))1_{[\tau,\tau+\epsilon]}(t)\big]dt,\ t\in[0,T],\\
 x_1(0)&=0,
\end{aligned}
\right.
\end{equation}
and can get the following standard estimates:
\begin{equation}\label{estimate-follower}
\begin{aligned}
&\sup_{0\leq t\leq T}\mathbb{E}|x_1(t)|^2\leq C\epsilon^2,\\
&\sup_{0\leq t\leq T}\mathbb{E}\big[|x^{v_1^{\epsilon},v_2}(t)-x^{\bar{v}_1,v_2}(t)-x_1(t)|^2\big]\leq C_{\epsilon}\epsilon^2,\\
&\sup_{0\leq t\leq T}\mathbb{E}\big[|x^{v_1^{\epsilon},v_2}(t)-x^{\bar{v}_1,v_2}(t)|^2\big]\leq C_1\epsilon^2.
\end{aligned}
\end{equation}
And then we obtain the variational inequality:
\begin{equation}\label{variational ineq}
\begin{aligned}
&\frac{1}{2}\mathbb{E}\bigg\{\int_0^T\Big[2L(t)x^{\bar{v}_1,v_2}(t)x_1(t)+2l(t)x_1(t)+R(t)\big[w_1^2-\bar{v}_1^2(t)\big]1_{[\tau,\tau+\epsilon]}(t)\\
&\qquad\qquad+2r(t)(w_1-\bar{v}_1(t))1_{[\tau,\tau+\epsilon]}(t)\Big]dt+2Mx^{\bar{v}_1,v_2}(T)x_1(T)+2mx_1(T)\bigg\}\geq o(\epsilon).
\end{aligned}
\end{equation}
Applying It\^{o}'s formula to $p(\cdot)x_1(\cdot)$, we have
\begin{equation}\label{px1}
\begin{aligned}
&\mathbb{E}\big[Mx^{\bar{v}_1,v_2}(T)x_1(T)+mx_1(T)\big]\\
&=\mathbb{E}\int_0^T\Big[\big(-L(t)x^{\bar{v}_1,v_2}(t)-l(t)\big)x_1(t)+B_1(t)p(t)(w_1-\bar{v}_1(t))1_{[\tau,\tau+\epsilon]}(t)\Big]dt.
\end{aligned}
\end{equation}
Substituting \eqref{px1} into \eqref{variational ineq}, we get
\begin{equation}\label{variational ineq1}
\begin{aligned}
&\mathbb{E}\bigg\{\int_0^T\bigg[\frac{1}{2}R(t)\big[w_1^2-\bar{v}_1^2(t)\big]+r(t)(w_1-\bar{v}_1(t))\\
&\qquad\qquad+B_1(t)p(t)(w_1-\bar{v}_1(t))\bigg]1_{[\tau,\tau+\epsilon]}(t)dt\bigg\}\geq o(\epsilon).
\end{aligned}
\end{equation}
Hence
\begin{equation}\label{optimal control-follower0}
R(t)\bar{v}_1(t)+r(t)+B_1(t)\mathbb{E}\big[p(t)|\mathcal{F}_t^{Y_1^0}\big]=0,\ a.e.t\in[0,T],\ a.s.
\end{equation}
Moreover, since $\bar{v}_1(\cdot)\in\mathcal{U}_{ad}^f[0,T]$, from Lemma 3.1 we have $\mathcal{F}_t^{Y_1^{\bar{v}_1,v_2}}=\mathcal{F}_t^{Y_1^0}$. Thus we get the desired conclusion.
\end{proof}

\begin{theorem}
Let {\bf (A1)} and {\bf (A2)} hold and $x_0\in\mathbb{R}$. Giving the leader's strategy $v_2(\cdot)\in\mathcal{U}_{ad}^l[0,T]$, let $\bar{v}_1(\cdot)\in\mathcal{U}_{ad}^f[0,T]$ satisfy \eqref{optimal control-follower} where $(p(\cdot),k(\cdot),\bar{k}(\cdot))$ is the solution to \eqref{adjoint eq}. Then $\bar{v}_1(\cdot)$ is an optimal control of the follower.
\end{theorem}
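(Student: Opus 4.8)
The plan is to prove sufficiency by a direct convexity argument: show that for any admissible $v_1(\cdot)\in\mathcal{U}_{ad}^{f0}[0,T]$, the difference $J_1(v_1(\cdot),v_2(\cdot))-J_1(\bar v_1(\cdot),v_2(\cdot))$ is nonnegative, using the first-order condition \eqref{optimal control-follower} together with the adjoint BSDE \eqref{adjoint eq}. By Lemma 3.2 it suffices to work on $\mathcal{U}_{ad}^{f0}[0,T]$, so I may treat $v_1(\cdot)$ as $\mathcal{F}_t^{Y_1^0}$-adapted and write $x(\cdot):=x^{v_1,v_2}(\cdot)$, $\bar x(\cdot):=x^{\bar v_1,v_2}(\cdot)$, and $\delta x(\cdot):=x(\cdot)-\bar x(\cdot)$, which solves the linear ODE-type equation $d\,\delta x(t)=[A(t)\delta x(t)+B_1(t)(v_1(t)-\bar v_1(t))]dt$, $\delta x(0)=0$ (the diffusion terms cancel since they are control-independent).

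First I would expand $J_1(v_1,v_2)-J_1(\bar v_1,v_2)$ and, using $L\ge 0$, $R\ge 0$, $M\ge 0$ from {\bf (A2)}, discard the quadratic remainder terms $\frac12\mathbb{E}\int_0^T[L(\delta x)^2+R(v_1-\bar v_1)^2]dt+\frac12 M\mathbb{E}(\delta x(T))^2\ge 0$. This leaves the cross terms
\begin{equation*}
J_1(v_1,v_2)-J_1(\bar v_1,v_2)\ \ge\ \mathbb{E}\bigg\{\int_0^T\big[L(t)\bar x(t)\delta x(t)+l(t)\delta x(t)+R(t)\bar v_1(t)(v_1(t)-\bar v_1(t))+r(t)(v_1(t)-\bar v_1(t))\big]dt+M\bar x(T)\delta x(T)+m\,\delta x(T)\bigg\}.
\end{equation*}
Next I would apply It\^o's formula to $p(t)\delta x(t)$, using the adjoint equation \eqref{adjoint eq} with terminal condition $p(T)=M\bar x(T)+m$ and the dynamics of $\delta x$; the martingale parts vanish in expectation, and one obtains
\begin{equation*}
\mathbb{E}\big[(M\bar x(T)+m)\delta x(T)\big]=\mathbb{E}\int_0^T\Big[B_1(t)p(t)(v_1(t)-\bar v_1(t))-\big(L(t)\bar x(t)+l(t)\big)\delta x(t)\Big]dt.
\end{equation*}
Substituting this back cancels the $L\bar x\,\delta x$ and $l\,\delta x$ terms, so the lower bound reduces to $\mathbb{E}\int_0^T\big[R(t)\bar v_1(t)+r(t)+B_1(t)p(t)\big](v_1(t)-\bar v_1(t))dt$.

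The final and most delicate step is to show this last integral is zero. Since $v_1(\cdot)$ and $\bar v_1(\cdot)$ are $\mathcal{F}_t^{Y_1^0}$-adapted and $\mathcal{F}_t^{Y_1^{\bar v_1,v_2}}=\mathcal{F}_t^{Y_1^0}$ by Lemma 3.1, I condition on $\mathcal{F}_t^{Y_1^{\bar v_1,v_2}}$ inside the expectation: by the tower property and measurability of $v_1(t)-\bar v_1(t)$,
\begin{equation*}
\mathbb{E}\int_0^T\big[R(t)\bar v_1(t)+r(t)+B_1(t)p(t)\big](v_1(t)-\bar v_1(t))dt=\mathbb{E}\int_0^T\big[R(t)\bar v_1(t)+r(t)+B_1(t)\mathbb{E}[p(t)\mid\mathcal{F}_t^{Y_1^{\bar v_1,v_2}}]\big](v_1(t)-\bar v_1(t))dt,
\end{equation*}
which is identically $0$ by the optimality condition \eqref{optimal control-follower}. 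Hence $J_1(v_1,v_2)\ge J_1(\bar v_1,v_2)$ for all admissible $v_1$, so $\bar v_1(\cdot)$ is optimal. The main obstacle is precisely this conditioning argument: one must be careful that all the required adaptedness and integrability hold so that the conditional expectation can be inserted without changing the value of the integral, and this is exactly where the hypothesis of Lemma 3.1 (hence assumption {\bf (A1)}--{\bf (A2)} and the admissibility definitions) is used. The rest — the a priori estimates analogous to \eqref{estimate-follower}, which here are in fact exact since $\delta x$ is deterministic-coefficient linear — is routine.
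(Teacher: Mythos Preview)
Your proof is correct and follows essentially the same route as the paper: split the cost difference into a nonnegative quadratic remainder (via $L,R,M\ge 0$) and a linear cross term, apply It\^o's formula to $p(\cdot)\delta x(\cdot)$ to rewrite the cross term as $\mathbb{E}\int_0^T[R\bar v_1+r+B_1 p](v_1-\bar v_1)dt$, and then use the tower property together with Lemma~3.1 and \eqref{optimal control-follower} to see this vanishes. The only cosmetic difference is that you invoke Lemma~3.2 to reduce to competitors in $\mathcal{U}_{ad}^{f0}[0,T]$, whereas the paper compares directly against $v_1\in\mathcal{U}_{ad}^f[0,T]$; both are fine since $\mathcal{U}_{ad}^f\subseteq\mathcal{U}_{ad}^{f0}$.
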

\begin{proof}
For any $v_1(\cdot)\in\mathcal{U}_{ad}^f[0,T]$, from \eqref{LQ cost functional}, we have
\begin{equation*}
J_1(v_1(\cdot),v_2(\cdot))-J_1(\bar{v}_1(\cdot),v_2(\cdot))=\textrm{I}+\textrm{II},
\end{equation*}
where
\begin{equation}\label{I}
\begin{aligned}
\textrm{I}=&\ \frac{1}{2}\mathbb{E}\bigg\{\int_0^T\big[L(t)(x^{v_1,v_2}(t)-x^{\bar{v}_1,v_2}(t))^2+R(t)(v_1(t)-\bar{v}_1(t))^2\big]dt\\
&\qquad+M(x^{v_1,v_2}(T)-x^{\bar{v}_1,v_2}(T))^2\bigg\},
\end{aligned}
\end{equation}
\begin{equation}\label{II}
\begin{aligned}
\textrm{II}=&\ \mathbb{E}\bigg\{\int_0^T\big[\big(l(t)+L(t)x^{\bar{v}_1,v_2}(t)\big)\big(x^{v_1,v_2}(t)-x^{\bar{v}_1,v_2}(t)\big)+\big(R(t)\bar{v}_1+r(t)\big)\\
&\qquad\times\big(v_1(t)-\bar{v}_1(t)\big)\big]dt+\big(Mx^{\bar{v}_1,v_2}(T)+m\big)\big(x^{v_1,v_2}(T)-x^{\bar{v}_1,v_2}(T)\big)\bigg\}.
\end{aligned}
\end{equation}

Applying It\^{o}'s formula to $p(\cdot)\big(x^{v_1,v_2}(\cdot)-x^{\bar{v}_1,v_2}(\cdot)\big)$, we get
\begin{equation}\label{px}
\begin{aligned}
&\mathbb{E}\big[(Mx^{\bar{v}_1,v_2}(T)+m)(x^{v_1,v_2}(T)-x^{\bar{v}_1,v_2}(T)\big]=\mathbb{E}\int_0^T\Big[B_1(t)p(t)(v_1(t)-\bar{v}_1(t))\\
&\quad-L(t)x^{\bar{v}_1,v_2}(t)(x^{v_1,v_2}(t)-x^{\bar{v}_1,v_2}(t))-l(t)(x^{v_1,v_2}(t)-x^{\bar{v}_1,v_2}(t))\Big]dt.
\end{aligned}
\end{equation}
Substituting \eqref{optimal control-follower} and \eqref{px} into \eqref{II}, we can obtain
\begin{equation}
\begin{aligned}
\textrm{II}=\mathbb{E}\int_0^T\big[R(t)\bar{v}_1(t)+r(t)+B_1(t)\mathbb{E}[p(t)|\mathcal{F}_t^{Y_1^0}]\big](v_1-\bar{v}_1))dt=0.
\end{aligned}
\end{equation}
It is obvious that $\textrm{I}\geq0$. From Lemma 3.1, we complete the proof.
\end{proof}

\subsubsection{Filtering Equations}

We need the following assumption.\\
{\bf(A3)}
$R^{-1}(\cdot)$ is a uniformly bounded, deterministic function.

Recall that $Y_1^{v_1,v_2}(\cdot)$ governed by \eqref{observation f} is the observation process of the follower. For any $v_1(\cdot)\in\mathcal{U}_{ad}^f[0,T]$, we let
\begin{equation}\label{definition}
\begin{aligned}
&\hat{\eta}(t):=\mathbb{E}[\eta(t)|\mathcal{F}_t^{Y_1^{v_1,v_2}}],\ \mbox{ for\ }\eta(t)=x^0(t), x^{v_1,v_2}(t), p(t), k(t)\mbox{ and\ }\bar{k}(t),\\
&P(t):=\mathbb{E}\big[x^{v_1,v_2}(t)-\hat{x}^{v_1,v_2}(t)\big]^2,\ t\in[0,T],
\end{aligned}
\end{equation}
be the optimal filtering $\eta(\cdot)$, and the mean square error of $\hat{x}^{v_1,v_2}(\cdot)$, respectively.

If {\bf(A1), (A2), (A3)} hold, then from \eqref{optimal control-follower}, $\bar{v}_1(\cdot)$ satisfies
\begin{equation}\label{barv1}
\begin{aligned}
\bar{v}_1(t)&=-R^{-1}(t)\big[B_1(t)\mathbb{E}\big[p(t)|\mathcal{F}_t^{Y_1^{\bar{v}_1,v_2}}\big]+r(t)\big]\\
            &=-R^{-1}(t)\big[B_1(t)\hat{p}(t)+r(t)\big],\ a.e.t\in[0,T],\ a.s.,
\end{aligned}
\end{equation}
which is represented by the filtering estimate of $p(\cdot)$. Thus it is necessary to study the filtering equations of the state and adjoint variables in the following.
\begin{lemma}
Let {\bf(A1)} and {\bf (A2)} hold, giving the control $v_2(\cdot)\in\mathcal{U}_{ad}^l[0,T]$, for any $v_1(\cdot)\in\mathcal{U}_{ad}^f[0,T]$, the optimal filtering estimate $\hat{x}^{v_1,v_2}(\cdot)$ of the solution $x^{v_1,v_2}(\cdot)$ to \eqref{LQsde} with respect to $\mathcal{F}_t^{Y_1^{v_1,v_2}}$ satisfies
\begin{equation}\label{filtering x}
\left\{
\begin{aligned}
d\hat{x}^{v_1,v_2}(t)&=\big[A(t)\hat{x}^{v_1,v_2}(t)+B_1(t)v_1(t)+B_2(t)v_2(t)+\alpha(t)\big]dt\\
                     &\quad+\big[c(t)+f_1(t)P(t)\big]d\widetilde{W}(t),\ t\in[0,T],\\
 \hat{x}^{v_1,v_2}(0)&=x_0,
\end{aligned}
\right.
\end{equation}
where the mean square error $P(\cdot)$ of the filtering estimate $\hat{x}^{v_1,v_2}(\cdot)$ is the unique solution to the following ODE:
\begin{equation}\label{P}
\left\{
\begin{aligned}
&\dot{P}(t)-2A(t)P(t)+\big(c(t)+f(t)P(t)\big)^2-c^2(t)-\bar{c}^2(t)=0,\ t\in[0,T],\\
&P(0)=0,
\end{aligned}
\right.
\end{equation}
and
\begin{equation}\label{innovation}
\begin{aligned}
\widetilde{W}(t)&:=\int_0^tdY_1^{v_1,v_2}(s)-\big(f_1(s)\hat{x}^{v_1,v_2}(s)+g(s)\big)ds\\
&\equiv\int_0^tf_1(s)\big(x^{v_1,v_2}(s)-\hat{x}^{v_1,v_2}(s)\big)ds+W(t)
\end{aligned}
\end{equation}
is a standard $\mathcal{F}_t^{Y_1^{v_1,v_2}}$-Brownian motion with values in $\mathbb{R}$.
\end{lemma}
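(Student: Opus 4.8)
The plan is to derive the filtering equation \eqref{filtering x} by the standard innovations approach to nonlinear filtering, specialized here to a conditionally linear-Gaussian setting. First I would introduce the innovation process $\widetilde{W}(\cdot)$ as in \eqref{innovation}, namely $\widetilde{W}(t)=Y_1^{v_1,v_2}(t)-\int_0^t\big(f_1(s)\hat{x}^{v_1,v_2}(s)+g(s)\big)ds = W(t)+\int_0^t f_1(s)\big(x^{v_1,v_2}(s)-\hat{x}^{v_1,v_2}(s)\big)ds$, and verify that it is a standard $\mathcal{F}_t^{Y_1^{v_1,v_2}}$-Brownian motion: it is clearly $\mathcal{F}_t^{Y_1^{v_1,v_2}}$-adapted and continuous, a martingale by the projection property of conditional expectation (the drift $f_1 x^{v_1,v_2}$ is replaced by its optimal estimate, so the compensator vanishes), and has quadratic variation $\langle\widetilde{W}\rangle_t=t$ since $\widetilde{W}-W$ has bounded variation; L\'evy's characterization then gives the claim. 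Since $v_2(\cdot)$ is fixed with $v_2\in\mathcal{F}_t^{Y_2^{v_2}}=\mathcal{F}_t^W$ and $v_1(\cdot)\in\mathcal{U}_{ad}^f[0,T]$ is $\mathcal{F}_t^{Y_1^{v_1,v_2}}$-adapted, the terms $B_1 v_1+B_2 v_2+\alpha$ in \eqref{LQsde} are already adapted to the observation filtration and pass through the conditional expectation unchanged.

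Next I would apply the general filtering theorem (see, e.g., Liptser–Shiryayev \cite{LS77} or Bensoussan \cite{Ben92}) to the signal $x^{v_1,v_2}$ with observation $Y_1^{v_1,v_2}$. Writing $dx^{v_1,v_2}=(Ax^{v_1,v_2}+B_1v_1+B_2v_2+\alpha)dt+c\,dW+\bar c\,d\bar W$ and $dY_1^{v_1,v_2}=(f_1 x^{v_1,v_2}+g)dt+dW$, the Fujisaki–Kallianpur–Kunita equation yields
\begin{equation*}
d\hat{x}^{v_1,v_2}(t)=\big[A(t)\hat{x}^{v_1,v_2}(t)+B_1(t)v_1(t)+B_2(t)v_2(t)+\alpha(t)\big]dt+\big[c(t)+\widehat{f_1 (x^{v_1,v_2}-\hat{x}^{v_1,v_2})(t)}+\,\text{(correlation term)}\big]d\widetilde{W}(t),
\end{equation*}
where the diffusion gain is $\mathbb{E}\big[(x^{v_1,v_2}-\hat{x}^{v_1,v_2})f_1(x^{v_1,v_2}-\hat{x}^{v_1,v_2})\big|\mathcal{F}_t^{Y_1^{v_1,v_2}}\big]$ coming from the conditional covariance of signal and observation noise, plus $c(t)$ coming from the direct correlation between the signal noise $c\,dW$ and the observation noise $dW$ (note $\bar W$ is independent of $W$ and contributes nothing to this cross term). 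The key reduction is that in this linear-Gaussian structure the conditional law of $x^{v_1,v_2}(t)$ given $\mathcal{F}_t^{Y_1^{v_1,v_2}}$ is Gaussian, so the conditional variance $P(t)=\mathbb{E}\big[x^{v_1,v_2}(t)-\hat{x}^{v_1,v_2}(t)\big]^2$ is in fact deterministic and equals the conditional one; hence the gain collapses to $c(t)+f_1(t)P(t)$, giving \eqref{filtering x}. To justify the Gaussianity and determinacy of $P$, I would note that after the state decomposition only $x^0$ carries randomness beyond the observation filtration and $x^0,Y_1^0$ form a linear Gaussian system with deterministic coefficients, so a Kalman–Bucy argument applies directly; alternatively one derives the equation for the error $x^{v_1,v_2}-\hat{x}^{v_1,v_2}$, applies It\^o to its square, takes expectations, and observes the stochastic integrals against $\widetilde W$ vanish in expectation, producing the Riccati ODE \eqref{P} with $P(0)=0$ (using $x^{v_1,v_2}(0)=\hat{x}^{v_1,v_2}(0)=x_0$); existence, uniqueness and boundedness of $P$ on $[0,T]$ follow from the local Lipschitz property of the quadratic right-hand side together with the a priori bound $0\le P(t)\le \mathbb{E}|x^0(t)-\mathbb{E}x^0(t)|^2$, which is finite under \textbf{(A1)}.

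The main obstacle I expect is handling the correlation between the signal noise and the observation noise correctly — both $x^{v_1,v_2}$ and $Y_1^{v_1,v_2}$ are driven by the \emph{same} Brownian motion $W$ — which is precisely why the innovation-gain term contains the extra additive $c(t)$ rather than just $f_1(t)P(t)$. One must be careful that the representation theorem for $\mathcal{F}_t^{Y_1^{v_1,v_2}}$-martingales is applied with respect to the innovation $\widetilde W$, and that the cross-variation $\langle x^{v_1,v_2}-\hat{x}^{v_1,v_2},\widetilde W\rangle$ is computed honestly from $d\langle x^{v_1,v_2},Y_1^{v_1,v_2}\rangle=c\,dt$. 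A secondary technical point is the legitimacy of the state-decomposition reduction so that $P$ is genuinely deterministic; since \textbf{(A1)} guarantees all coefficients are bounded and deterministic and $x_0$ is a constant, this is routine but should be stated. Everything else — the martingale property of $\widetilde W$, the passage of the adapted drift terms through conditioning, and the derivation of \eqref{P} — is standard bookkeeping once these two points are settled.
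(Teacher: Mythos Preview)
Your proposal is correct and follows essentially the same approach as the paper: the paper's proof simply invokes Theorem~8.1 in Liptser--Shiryayev \cite{LS77} and Theorem~5.7 in Xiong \cite{Xiong08} and omits all details, while you have unpacked exactly what those references deliver (innovations process, Fujisaki--Kallianpur--Kunita representation, conditionally Gaussian reduction, and the resulting Riccati equation for $P$). Your careful treatment of the signal--observation noise correlation producing the extra $c(t)$ in the gain, and of the state decomposition ensuring $P$ is deterministic, is precisely the content hidden behind the paper's citation.
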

\begin{proof}
We can easily deduce the result using Theorem 8.1 in Liptser and Shiryayev \cite{LS77} and Theorem 5.7 in Xiong \cite{Xiong08}. The detail is omitted.
\end{proof}

Similar to Lemmas 3.1 and 3.2 of \cite{WWX15}, we have the following result for the filtering equation of the adjoint equation \eqref{adjoint eq}.
\begin{lemma}
Let {\bf(A1), (A2), (A3)} hold, giving the control $v_2(\cdot)\in\mathcal{U}_{ad}^l[0,T]$, the optimal filtering estimate of $(p(\cdot),k(\cdot),\bar{k}(\cdot))$ with respect to $\mathcal{F}_t^{Y_1^{\bar{v}_1,v_2}}$ satisfies
\begin{equation}\label{filtering p}
\left\{
\begin{aligned}
-d\hat{p}(t)&=\big[L(t)\hat{x}^{\bar{v}_1,v_2}(t)+A(t)\hat{p}(t)+l(t)\big]dt-\hat{\mathcal{K}}(t)d\widetilde{W}(t),\ \ t\in[0,T],\\
  \hat{p}(T)&=M\hat{x}^{\bar{v}_1,v_2}(T)+m,
\end{aligned}
\right.
\end{equation}
with
\begin{equation}\label{KKK}
\hat{\mathcal{K}}(t):=\hat{k}(t)+f_1(t)\big[\widehat{x(t)p(t)}-\hat{x}(t)\hat{p}(t)\big],\ t\in[0,T],
\end{equation}
where $\hat{x}^{\bar{v}_1,v_2}(\cdot)$ and $\widetilde{W}(\cdot)$ satisfy \eqref{filtering x} and \eqref{innovation} with $v_1(\cdot)=\bar{v}_1(\cdot)$, respectively.
\end{lemma}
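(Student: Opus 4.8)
The plan is to regard $p(\cdot)$, the first component of the solution of the backward equation \eqref{adjoint eq}, as an ordinary It\^{o} process and to apply the standard nonlinear filtering formula (the Fujisaki--Kallianpur--Kunita theorem; equivalently Theorem~8.1 in Liptser and Shiryayev \cite{LS77} or Theorem~5.7 in Xiong \cite{Xiong08}) to it, using $Y_1^{\bar{v}_1,v_2}(\cdot)$ as the observation. The point to exploit is that, once \eqref{adjoint eq} is solved, $p(\cdot)$ is an $\mathcal{F}_t$-semimartingale with the explicit decomposition read off from \eqref{adjoint eq}: drift $-[A(t)p(t)+L(t)x^{\bar{v}_1,v_2}(t)+l(t)]$, martingale coefficients $k(\cdot)$ against $W(\cdot)$ and $\bar{k}(\cdot)$ against $\bar{W}(\cdot)$, and terminal value $Mx^{\bar{v}_1,v_2}(T)+m$, all of which are $\mathcal{F}_t$-adapted and square integrable under {\bf (A1)}--{\bf (A3)} together with the usual a priori estimates for \eqref{LQsde} and \eqref{adjoint eq}. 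Since the observation noise in \eqref{observation f} is exactly $W(\cdot)$, the predictable covariation of the martingale part of $p(\cdot)$ with the observation noise has density $k(\cdot)$ (the $\bar{k}(\cdot)\,d\bar{W}(\cdot)$ part being orthogonal to the observation noise $W(\cdot)$); and by the previous lemma the innovation process attached to $Y_1^{\bar{v}_1,v_2}(\cdot)$ is the $\mathcal{F}_t^{Y_1^{\bar{v}_1,v_2}}$-Brownian motion $\widetilde{W}(\cdot)$ of \eqref{innovation}. Moreover, by Lemma~\ref{FollowerLemma} one may identify $\mathcal{F}_t^{Y_1^{\bar{v}_1,v_2}}=\mathcal{F}_t^{Y_1^0}$, which guarantees that $\bar{v}_1(\cdot)$ and $\hat{x}^{\bar{v}_1,v_2}(\cdot)$ are adapted to the very filtration onto which we project, so that the filtered system closes.

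With this set-up I would evaluate the filtering formula term by term. The drift of $\hat{p}(\cdot)$ is the optimal filter of $-[Ap+Lx^{\bar{v}_1,v_2}+l]$, which equals $-[A\hat{p}+L\hat{x}^{\bar{v}_1,v_2}+l]$ because $A(\cdot),L(\cdot),l(\cdot)$ are deterministic. The diffusion coefficient against $\widetilde{W}(\cdot)$ is the sum of two contributions: first, the filter of the covariation density $k(\cdot)$ between $p(\cdot)$ and the observation noise $W(\cdot)$, namely $\hat{k}(\cdot)$; and second, the innovation covariance correction $\widehat{p(f_1x^{\bar{v}_1,v_2}+g)}-\hat{p}\,\widehat{(f_1x^{\bar{v}_1,v_2}+g)}$ produced by the observation drift $f_1x^{\bar{v}_1,v_2}+g$, which collapses to $f_1(\widehat{x^{\bar{v}_1,v_2}p}-\hat{x}^{\bar{v}_1,v_2}\hat{p})$ since $g(\cdot)$ is deterministic. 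Adding the two gives precisely $\hat{\mathcal{K}}(\cdot)$ in \eqref{KKK}; note that $\hat{\bar{k}}(\cdot)$ never enters, so no separate characterization of it is needed. For the terminal condition, conditioning the terminal value in \eqref{adjoint eq} on $\mathcal{F}_T^{Y_1^{\bar{v}_1,v_2}}$ yields $\hat{p}(T)=M\hat{x}^{\bar{v}_1,v_2}(T)+m$, and the system \eqref{filtering p}--\eqref{KKK} follows.

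The step I expect to be the genuine work is the rigorous justification that the (forward) filtering formula may be invoked for the backwardly-defined process $p(\cdot)$ and that the resulting pair $(\hat{p}(\cdot),\hat{\mathcal{K}}(\cdot))$ solves a bona fide $\mathcal{F}_t^{Y_1^{\bar{v}_1,v_2}}$-adapted BSDE; this amounts to checking the integrability hypotheses of the cited theorems against the a priori estimates for \eqref{LQsde} and \eqref{adjoint eq}. As an alternative route, mirroring Lemmas~3.1 and 3.2 of Wang et al. \cite{WWX15}, one may instead write $p(t)=\mathbb{E}\big[Mx^{\bar{v}_1,v_2}(T)+m+\int_t^T(A(s)p(s)+L(s)x^{\bar{v}_1,v_2}(s)+l(s))\,ds\,\big|\,\mathcal{F}_t\big]$, take $\mathbb{E}[\,\cdot\,|\,\mathcal{F}_t^{Y_1^{\bar{v}_1,v_2}}]$ on both sides, use the tower property and Lemma~\ref{FollowerLemma}, and apply the martingale representation theorem relative to the innovation $\widetilde{W}(\cdot)$; here the delicate point is interchanging the conditional expectation with the time integral while the conditioning $\sigma$-algebra moves with $t$. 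Either way the computation terminates with the identification of $\hat{\mathcal{K}}(\cdot)$ as in \eqref{KKK}, and the remaining routine details can be omitted as in \cite{WWX15}.
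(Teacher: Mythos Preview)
Your proposal is correct and matches the paper's approach: the paper does not give a detailed proof but simply invokes Lemmas~3.1 and~3.2 of \cite{WWX15}, which is precisely the filtering-formula computation you outline (treating $p(\cdot)$ as a forward It\^{o} semimartingale, applying Theorem~8.1 of \cite{LS77}/Theorem~5.7 of \cite{Xiong08} with observation $Y_1^{\bar{v}_1,v_2}$, and reading off the innovation gain as $\hat{k}+f_1(\widehat{xp}-\hat{x}\hat{p})$). Your identification of both routes---the direct FKK formula and the martingale-representation argument \`a la \cite{WWX15}---covers exactly what the paper has in mind.
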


\subsubsection{State Estimate Feedback Representation}

In this subsection, we study the state feedback representation of the optimal control $\bar{v}_1(\cdot)$ of the follower, with the given control $v_2(\cdot)$ of the leader.

Giving the control $v_2(\cdot)$, for any $v_1(\cdot)\in\mathcal{U}_{ad}^f[0,T]$, we know that \eqref{LQsde} and \eqref{adjoint eq} admit unique solutions, respectively. Noting the terminal condition of \eqref{adjoint eq} and the appearance of control variable $v_2(\cdot)$, we set
\begin{equation}\label{p1}
p(t)=\Pi(t)x^{\bar{v}_1,v_2}(t)+\theta(t),\ t\in[0,T],
\end{equation}
where $\Pi(\cdot)$ is a deterministic and differentiable function with $\Pi(T)=M$, and $\theta(\cdot)$ is an $\mathbb{R}$-valued, $\mathcal{F}_t$-adapted process which satisfies the following BSDE:
\begin{equation}\label{theta}
\left\{
\begin{aligned}
d\theta(t)&=\beta(t)dt+\lambda_1(t)dW(t)+\lambda_2(t)d\bar{W}(t),\ t\in[0,T],\\
\theta(T)&=m.
\end{aligned}
\right.
\end{equation}
In the above equation, the process $\beta(\cdot)$ will be determined later.

Applying It\^{o}'s formula to \eqref{p1}, we obtain
\begin{equation}\label{itop}
\begin{aligned}
dp(t)&=\big[A(t)\Pi(t)x^{\bar{v}_1,v_2}(t)+B_1(t)\Pi(t)\bar{v}_1(t)+B_2(t)\Pi(t)v_2(t)+\alpha(t)\Pi(t)\\
     &\quad+\dot{\Pi}(t)x^{\bar{v}_1,v_2}(t)+\beta(t)\big]dt+\big[c(t)\Pi(t)+\lambda_1(t)\big]dW(t)+\big[\bar{c}(t)\Pi(t)+\lambda_2(t)\big]d\bar{W}(t).
\end{aligned}
\end{equation}
Substituting \eqref{p1} into \eqref{barv1}, we achieve
\begin{equation}\label{Barv1}
\bar{v}_1(t)=-R^{-1}(t)\big[B_1(t)\Pi(t)\hat{x}^{\bar{v}_1,v_2}(t)+B_1(t)\hat{\theta}(t)+r(t)\big],\ a.e.t\in[0,T],\ a.s.
\end{equation}
Then plugging \eqref{Barv1} into \eqref{itop}, we get
\begin{equation}\label{adjoint eq0}
\begin{aligned}
dp(t)&=\big[A(t)\Pi(t)x^{\bar{v}_1,v_2}(t)-B_1^2(t)\Pi^2(t)R^{-1}(t)\hat{x}^{\bar{v}_1,v_2}(t)-B_1^2(t)\Pi(t)R^{-1}(t)\hat{\theta}(t)\\
&\quad-B_1(t)\Pi(t)R^{-1}(t)r(t)+B_2(t)\Pi(t)v_2(t)+\alpha(t)\Pi(t)+\dot{\Pi}(t)x^{\bar{v}_1,v_2}(t)+\beta(t)\big]dt\\
&\quad+\big[c(t)\Pi(t)+\lambda_1(t)\big]dW(t)+\big[\bar{c}(t)\Pi(t)+\lambda_2(t)\big]d\bar{W}(t).
\end{aligned}
\end{equation}
Then comparing the drift term in \eqref{adjoint eq0} with that of \eqref{adjoint eq}, we have
\begin{equation}\label{driftterm}
\begin{aligned}
&\big[\dot{\Pi}(t)+2A(t)\Pi(t)-B_1^2(t)\Pi^2(t)R^{-1}(t)+L(t)\big]x^{\bar{v}_1,v_2}(t)+B_1^2(t)\Pi^2(t)R^{-1}(t)x^{\bar{v}_1,v_2}(t)\\
&-B_1^2(t)\Pi^2(t)R^{-1}(t)\hat{x}^{\bar{v}_1,v_2}(t)+A(t)\theta(t)-B_1^2(t)\Pi(t)R^{-1}(t)\hat{\theta}(t)+B_2(t)\Pi(t)v_2(t)\\
&-B_1(t)\Pi(t)R^{-1}(t)r(t)+\alpha(t)\Pi(t)+l(t)+\beta(t)=0,\ a.e.t\in[0,T].
\end{aligned}
\end{equation}
From the coefficients of $x^{\bar{v}_1,v_2}(\cdot)$ in \eqref{driftterm}, we derive if the following Riccati equation:
\begin{equation}\label{riccatieq}
\left\{
\begin{aligned}
&\dot{\Pi}(t)+2A(t)\Pi(t)-B_1^2(t)\Pi^2(t)R^{-1}(t)+L(t)=0,\ t\in[0,T],\\
&\Pi(T)=M,
\end{aligned}
\right.
\end{equation}
admits a unique solution $\Pi(\cdot)$, then
\begin{equation}\label{beta}
\begin{aligned}
-\beta(t)&=B_1^2(t)\Pi^2(t)R^{-1}(t)x^{\bar{v}_1,v_2}(t)-B_1^2(t)\Pi^2(t)R^{-1}(t)\hat{x}^{\bar{v}_1,v_2}(t)+A(t)\theta(t)\\
&\quad-B_1^2(t)\Pi(t)R^{-1}(t)\hat{\theta}(t)+B_2(t)\Pi(t)v_2(t)-B_1(t)\Pi(t)R^{-1}(t)r(t)\\
&\quad+\alpha(t)\Pi(t)+l(t),\ a.e.t\in[0,T].
\end{aligned}
\end{equation}
In fact, the solvability of \eqref{riccatieq} is standard. Thus the BSDE \eqref{theta} takes the form:
\begin{equation}\label{theta1}
\left\{
\begin{aligned}
-d\theta(t)&=\big[B_1^2(t)\Pi^2(t)R^{-1}(t)x^{\bar{v}_1,v_2}(t)-B_1^2(t)\Pi^2(t)R^{-1}(t)\hat{x}^{\bar{v}_1,v_2}(t)+A(t)\theta(t)\\
&\qquad-B_1^2(t)\Pi(t)R^{-1}(t)\hat{\theta}(t)+B_2(t)\Pi(t)v_2(t)-B_1(t)\Pi(t)R^{-1}(t)r(t)\\
&\qquad+\alpha(t)\Pi(t)+l(t)\big]dt-\lambda_1(t)dW(t)-\lambda_2(t)d\bar{W}(t),\ t\in[0,T],\\
\theta(T)&=m.
\end{aligned}
\right.
\end{equation}
For given $v_2(\cdot)$, plugging the optimal control $\bar{v}_1(\cdot)$ of \eqref{Barv1} into \eqref{filtering x}, we have
\begin{equation}\label{filtering x2}
\left\{
\begin{aligned}
d\hat{x}^{\bar{v}_1,v_2}(t)&=\big\{\big[A(t)-B_1^2(t)\Pi(t)R^{-1}(t)\big]\hat{x}^{\bar{v}_1,v_2}(t)-B_1^2(t)R^{-1}(t)\hat{\theta}(t)\\
&\qquad-B_1(t)R^{-1}(t)r(t)+B_2(t)v_2(t)+\alpha(t)\big\}dt\\
&\quad+\big[c(t)+f_1(t)P(t)\big]d\widetilde{W}(t),\ t\in[0,T],\\
\hat{x}^{v_1,v_2}(0)&=x_0.
\end{aligned}
\right.
\end{equation}
Similar to Lemma 3.4, we can get the filtering estimate $\hat{\theta}(\cdot)$ of the solution $\theta(\cdot)$ to \eqref{theta1} with respect to $\mathcal{F}_t^{Y_1^{\bar{v}_1,v_2}}$ in the following:
\begin{equation}\label{hattheta1}
\left\{
\begin{aligned}
-d\hat{\theta}(t)&=\big\{\big[A(t)-B_1^2(t)\Pi(t)R^{-1}(t)\big]\hat{\theta}(t)+B_2(t)\Pi(t)v_2(t)-B_1(t)\Pi(t)R^{-1}(t)r(t)\\
&\qquad+\alpha(t)\Pi(t)+l(t)\big\}dt-\hat{\Gamma}(t)d\widetilde{W}(t),\ t\in[0,T],\\
\hat{\theta}(T)&=m,
\end{aligned}
\right.
\end{equation}
where
\begin{equation}\label{GammaGamma}
\hat{\Gamma}(t):=\hat{\lambda}_1(t)+f_1(t)\big[\widehat{\theta(t)x(t)}-\hat{\theta}(t)\hat{x}(t)\big],\ t\in[0,T],
\end{equation}
and $\Pi(\cdot)$, $\hat{x}^{\bar{v}_1,v_2}(\cdot)$ and $\widetilde{W}(\cdot)$ satisfy \eqref{riccatieq}, \eqref{filtering x2} and \eqref{innovation} with $v_1(\cdot)=\bar{v}_1(\cdot)$, respectively.

Putting \eqref{filtering x2} and \eqref{hattheta1} together, we get
\begin{equation}\label{hatfbsde}
\left\{
\begin{aligned}
d\hat{x}^{\bar{v}_1,v_2}(t)&=\big\{\big[A(t)-B_1^2(t)\Pi(t)R^{-1}(t)\big]\hat{x}^{\bar{v}_1,v_2}(t)-B_1^2(t)R^{-1}(t)\hat{\theta}(t)-B_1(t)R^{-1}(t)r(t)\\
&\qquad+B_2(t)v_2(t)+\alpha(t)\big\}dt+\big[c(t)+f_1(t)P(t)\big]d\widetilde{W}(t),\\
-d\hat{\theta}(t)&=\big\{\big[A(t)-B_1^2(t)\Pi(t)R^{-1}(t)\big]\hat{\theta}(t)+B_2(t)\Pi(t)v_2(t)-B_1(t)\Pi(t)R^{-1}(t)r(t)\\
&\qquad+\alpha(t)\Pi(t)+l(t)\big\}dt-\hat{\Gamma}(t)d\widetilde{W}(t),\ t\in[0,T],\\
\hat{x}^{v_1,v_2}(0)&=x_0,\ \hat{\theta}(T)=m.
\end{aligned}
\right.
\end{equation}
Note that \eqref{hatfbsde} is an FBSDE, whose solvability can be easily obtained, for given $v_2(\cdot)$.

We summarize the above procedure in the following theorem.
\begin{theorem}
Let {\bf(A1), (A2), (A3)} hold. For any given $x_0\in\mathbb{R}$ and $v_2(\cdot)\in\mathcal{U}_{ad}^l[0,T]$, the follower's problem is solvable with the optimal strategy $\bar{v}_1(\cdot)$ being of a state estimate feedback representation as
\begin{equation}\label{Barv2}
\bar{v}_1(t)=-R^{-1}(t)\big[B_1(t)\Pi(t)\hat{x}^{\bar{v}_1,v_2}(t)+B_1(t)\hat{\theta}(t)+r(t)\big],\ a.e.t\in[0,T],\ a.s.,
\end{equation}
where $\Pi(\cdot)$ is the solution to \eqref{riccatieq}. The filtering estimate of the optimal state trajectory $\hat{x}^{\bar{v}_1,v_2}(\cdot)$ and the process pair $(\hat{\theta}(\cdot),\hat{\Gamma}(\cdot))$ are the unique $\mathcal{F}_t^{Y_1^{\bar{v}_1,v_2}}$-adapted solution to the FBSDFEs \eqref{hatfbsde}.
\end{theorem}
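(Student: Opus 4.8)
The plan is to collect the ingredients already in hand --- the necessary condition \eqref{optimal control-follower} of Theorem 3.1, the sufficient condition of Theorem 3.2, the filtering formulas of Lemmas 3.3 and 3.4, and the decoupling computation \eqref{p1}--\eqref{hatfbsde} --- and to supply the three facts these leave open: well-posedness of the Riccati equation \eqref{riccatieq} on all of $[0,T]$, unique solvability of the filtered system \eqref{hatfbsde} for given $v_2(\cdot)$, and admissibility plus optimality of the control defined by \eqref{Barv2}.

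For the Riccati equation I would argue that under \textbf{(A1)} and \textbf{(A3)} equation \eqref{riccatieq} is a scalar Riccati equation with bounded coefficients, with $B_1^2(\cdot)R^{-1}(\cdot)\geq0$, $L(\cdot)\geq0$ and terminal value $M\geq0$; local existence and uniqueness follow from the local Lipschitz property of the quadratic nonlinearity, and blow-up on $[0,T]$ is excluded by a comparison argument, bounding $\Pi(\cdot)$ below by $0$ and above by the solution of the linear ODE obtained by dropping the quadratic term. This is the classical Riccati equation of a scalar LQ problem with nonnegative weights, so I would only sketch it.

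For the filtered system the key remark is that \eqref{hatfbsde} \emph{decouples}: its backward component \eqref{hattheta1} is a one-dimensional linear BSDE driven by the innovation $\widetilde W(\cdot)$ whose generator involves only $\hat\theta(\cdot)$, with bounded coefficient $A(\cdot)-B_1^2(\cdot)\Pi(\cdot)R^{-1}(\cdot)$ (from the previous step) and inhomogeneous term $B_2\Pi v_2-B_1\Pi R^{-1}r+\alpha\Pi+l\in L^2_{\mathcal F}(0,T;\mathbb R)$ (since $v_2(\cdot)$ has finite second moment), so it admits a unique $\mathcal F_t^{Y_1^{\bar v_1,v_2}}$-adapted solution $(\hat\theta(\cdot),\hat\Gamma(\cdot))$; substituting $\hat\theta(\cdot)$ into the forward component \eqref{filtering x2}, now a linear SDE with bounded coefficients and known inhomogeneity, gives a unique $\mathcal F_t^{Y_1^{\bar v_1,v_2}}$-adapted $\hat x^{\bar v_1,v_2}(\cdot)$. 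This is exactly why the solvability is immediate.

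Finally I would define $\bar v_1(\cdot)$ by \eqref{Barv2}; by \textbf{(A3)} it is square-integrable, and after the identification $\mathcal F_t^{Y_1^{\bar v_1,v_2}}=\mathcal F_t^{Y_1^0}$ of Lemma 3.1 it is $\mathcal F_t^{Y_1^0}$-adapted, hence lies in $\mathcal U_{ad}^{f0}[0,T]\cap\mathcal U_{ad}^f[0,T]$. Setting $p(\cdot):=\Pi(\cdot)x^{\bar v_1,v_2}(\cdot)+\theta(\cdot)$ with $\theta(\cdot)$ solving \eqref{theta1}, the It\^o computation \eqref{itop}--\eqref{theta1} together with \eqref{riccatieq} shows that $(p(\cdot),c(\cdot)\Pi(\cdot)+\lambda_1(\cdot),\bar c(\cdot)\Pi(\cdot)+\lambda_2(\cdot))$ solves the adjoint BSDE \eqref{adjoint eq}, and conditioning \eqref{p1} on $\mathcal F_t^{Y_1^{\bar v_1,v_2}}$ gives $\hat p=\Pi\hat x^{\bar v_1,v_2}+\hat\theta$, so that \eqref{Barv2} is precisely the optimality relation \eqref{optimal control-follower}; Theorem 3.2 then yields that $\bar v_1(\cdot)$ is optimal. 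Uniqueness of the feedback follows because $R(\cdot)>0$ makes \eqref{optimal control-follower} determine $\bar v_1(\cdot)$ uniquely from $\hat p(\cdot)$, itself unique by the previous step. I expect the only real obstacle to be the bookkeeping around the circular dependence between $\bar v_1(\cdot)$ and the filtration $\mathcal F_t^{Y_1^{\bar v_1,v_2}}$ --- handled by working throughout over the uncontrolled filtration $\mathcal F_t^{Y_1^0}$ via Lemmas 3.1 and 3.2 --- together with the non-blow-up of \eqref{riccatieq}; no deeper analytic difficulty arises at this stage.
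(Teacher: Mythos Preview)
Your proposal is correct and follows essentially the same route as the paper's own proof, which is very terse: the paper simply notes that \eqref{riccatieq} is solvable, then solves the decoupled BSDE \eqref{hattheta1} by standard theory, then the forward filter \eqref{filtering x2}, and declares the feedback representation obtained. You supply the details the paper omits --- the non-blow-up argument for the Riccati equation, the explicit observation that \eqref{hatfbsde} decouples, and the closing of the loop through Theorems 3.1 and 3.2 to confirm optimality and admissibility --- but the logical skeleton is identical.
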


\begin{Remark}
We highlight that the $\hat{\mathcal{K}}(\cdot)$ is a part of the solution $(\hat{p}(\cdot),\hat{\mathcal{K}}(\cdot))$ to the BSDFE \eqref{filtering p} and $\hat{\Gamma}(\cdot)$ is a part of the solution $(\hat{\theta}(\cdot),\hat{\Gamma}(\cdot))$ to the BSDFE \eqref{hattheta1}, and $\widetilde{W}(\cdot)$ satisfies \eqref{innovation} with $v_1(\cdot)=\bar{v}_1(\cdot)$.
\end{Remark}

\begin{proof}
For given $v_2(\cdot)$, let $\Pi(\cdot)$ satisfy \eqref{riccatieq}. By the standard BSDE theory, we can solve \eqref{hattheta1} to obtain $(\hat{\theta}(\cdot),\hat{\Gamma}(\cdot))$, and $P(\cdot)$ is the unique solution to \eqref{P}. Then we can obtain $\hat{x}^{v_1,v_2}(\cdot)$, which is the $\mathcal{F}_t^{Y_1^{\bar{v}_1,v_2}}$-adapted solution to \eqref{filtering x2}. Therefore, the state estimate feedback representation \eqref{Barv2} can be obtained. The proof is complete.
\end{proof}

\subsection{Optimization for The Leader}

\subsubsection{Maximum Principle}
Since $\hat{\theta}(\cdot)$ is involved in the coefficient of \eqref{LQsde} when inserting \eqref{Barv1} into \eqref{LQsde}, \eqref{hattheta1} can be regarded as the new backward ``state" equation of the leader. Then we give the state equation of the leader as follows (Some time variables $t$ will be omitted for simplicity):
\begin{equation}\label{leaderstate}
\left\{
\begin{aligned}
dx^{v_2}(t)&=\big[Ax^{v_2}-B_1^2\Pi R^{-1}\hat{x}^{v_2}-B_1^2R^{-1}\hat{\theta}-B_1R^{-1}r+B_2v_2+\alpha\big]dt\\
&\quad+cdW(t)+\bar{c}d\bar{W}(t),\\
-d\hat{\theta}(t)&=\big\{[A-B_1^2\Pi R^{-1}]\hat{\theta}+B_2\Pi v_2-B_1\Pi R^{-1}r+\alpha\Pi+l\big\}dt-\hat{\Gamma}d\widetilde{W}(t),\ t\in[0,T],\\
x^{v_2}(0)&=x_0,\ \hat{\theta}(T)=m,
\end{aligned}
\right.
\end{equation}
where we set $x^{v_2}(\cdot)\equiv x^{\bar{v}_1,v_2}(\cdot)$. First, we find that, which is vital, the $\mathcal{F}_t^{Y_1}$-adapted Brownian motion $\widetilde{W}(\cdot)$ in \eqref{leaderstate} is independent of the control variable $v_2(\cdot)$. Indeed,
\begin{equation}\label{widetildeW2}
\begin{aligned}
d\widetilde{W}&=dY_1^{v_2}-(f_1\hat{x}^{v_2}+g)dt\\
&=dY_1^0+dY_1^1-[f_1(\hat{x}^0+x^1)+g]dt\\
&=dY_1^0-f_1\hat{x}^0dt,
\end{aligned}
\end{equation}
where $Y_1^0(\cdot)$ and $x^0(\cdot)$ are independent of the control variables. Thus we get the result and it is convenient for us to apply the spike variational method.
\begin{Remark}
Firstly, it is easy to check that $\widetilde{W}(\cdot)$ is an $\mathcal{F}_t^{Y_1}$-adapted Brownian motion independent of the control variable $v_2(\cdot)$ under the expectation $\mathbb{E}$ and $\mathbb{E}\int_0^T\{\cdots\}d\widetilde{W}(t)=0$. Secondly, going back to the observation equation \eqref{observation f}, it is easy to find that $Y_1(\cdot)$ depends on the state $x^{v_2}(\cdot)$ which is driven by the Brownian motions $W(\cdot)$ and $\bar{W}(\cdot)$. Thus the stochastic process $Y_1(\cdot)$ is not independent of $W(\cdot)$ or $\bar{W}(\cdot)$, then $\widetilde{W}(\cdot)$ is not independent of $W(\cdot)$ or $\bar{W}(\cdot)$. Thirdly, since the leader faces an LQ partially observed optimal control problem of {\it conditional mean-field forward-backward stochastic differential equations} (CMFFBSDEs for short), and due to the appearance of the new $\mathcal{F}_t^{Y_1}$-Brownian motion $\widetilde{W}(\cdot)$, a key step in this section is to ensure a reasonable form of adjoint FBSDE and to study its uniqueness and existence of adapted solutions.
\end{Remark}

In the following, we use the spike variation technique to the optimal control $\bar{v}_2(\cdot)$ of the leader. For some $\rho>0$ and $\tau\in[0,T)$, define
\[
v_2^{\rho}(t)=
\begin{cases}
w_2,&\text{$\tau\leq t\leq\tau+\rho$},\\
\bar{v}_2(t),&\text{otherwise},
\end{cases}
\]
where $w_2$ is an arbitrary $\mathbb{R}$-valued, $\mathcal{F}_\tau^{Y_2^{\bar{v}_2}}$-measurable random variable such that $\sup\limits_{\omega\in\Omega}|v_2(\omega)|<\infty$. Let $x^{v_2^{\rho}}(\cdot)$ be the trajectory corresponding to $v_2^{\rho}(\cdot)$.

Similarly in Section 3.1, we introduce the following variational equations:
\begin{equation}\label{variational eq2}
\left\{
\begin{aligned}
d\xi(t)&=\big[A\xi-B_1^2\Pi R^{-1}\hat{\xi}-B_1^2R^{-1}\hat{\theta}_1+B_2(w_2-\bar{v}_2)1_{[\tau,\tau+\rho]}\big]dt,\ t\in[0,T],\\
 \xi(0)&=0,
\end{aligned}
\right.
\end{equation}
\begin{equation}\label{variational eq3}
\left\{
\begin{aligned}
-d\hat{\theta}_1(t)&=\big[(A-B_1^2\Pi R^{-1})\hat{\theta}_1+B_2\Pi(w_2-\bar{v}_2)1_{[\tau,\tau+\rho]}\big]dt-\hat{\Gamma}_1d\widetilde{W}(t),\ t\in[0,T],\\
  \hat{\theta}_1(T)&=0.
\end{aligned}
\right.
\end{equation}
Similar to Lemma 1 and Lemma 2 in Xu \cite{Xu95}, we can get the following estimates.
\begin{lemma}
Suppose {\bf (A1), (A2), (A3)} hold, we have
\begin{equation}\label{estimate-leader}
\begin{aligned}
&\sup_{0\leq t\leq T}\mathbb{E}|\xi(t)|^2\leq C\rho^2,\ \ \sup_{0\leq t\leq T}\mathbb{E}|\xi(t)|^4\leq C\rho^4,\\
&\sup_{0\leq t\leq T}\mathbb{E}|\hat{\xi}(t)|^2\leq C\rho^2,\ \ \sup_{0\leq t\leq T}\mathbb{E}|\hat{\xi}(t)|^4\leq C\rho^4,\\
&\sup_{0\leq t\leq T}\mathbb{E}|\hat{\theta}_1(t)|^2\leq C\rho^2,\ \ \sup_{0\leq t\leq T}\mathbb{E}|\hat{\theta}_1(t)|^4\leq C\rho^4,\\
&\mathbb{E}\int_0^T|\hat{\Gamma}_1(t)|^2dt\leq C\rho^2,\ \ \mathbb{E}\bigg(\int_0^T|\hat{\Gamma}_1(t)|^2dt\bigg)^2\leq C\rho^4,\\
&\sup_{0\leq t\leq T}\mathbb{E}[|x^{v_2^{\rho}}(t)-x^{\bar{v}_2}(t)-\xi(t)|^2]=0,\ \ \sup_{0\leq t\leq T}\mathbb{E}[|\hat{x}^{v_2^{\rho}}(t)-\hat{x}^{\bar{v}_2}(t)-\hat{\xi}(t)|^2]=0,\\
&\sup_{0\leq t\leq T}\mathbb{E}[|\hat{\theta}^{\rho}(t)-\hat{\bar{\theta}}(t)-\hat{\theta}_1(t)|^2]=0,\ \ \mathbb{E}\int_0^T|\hat{\Gamma}^{\rho}(t)-\hat{\bar{\Gamma}}(t)-\hat{\Gamma}_1(t)|^2dt=0.
\end{aligned}
\end{equation}
\end{lemma}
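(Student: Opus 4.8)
The plan is to prove the estimates in \eqref{estimate-leader} by treating the forward variational equation \eqref{variational eq2} and the backward variational equation \eqref{variational eq3} in turn, then combining them with the perturbation estimates. First I would handle \eqref{variational eq3}: it is a linear BSDFE driven only by $\widetilde{W}(\cdot)$ with zero terminal value and an inhomogeneous term $B_2\Pi(w_2-\bar{v}_2)\mathbf{1}_{[\tau,\tau+\rho]}$. Since $\sup_\omega|w_2(\omega)|<\infty$ and $\bar v_2\in\mathcal U_{ad}^l[0,T]$ so $\mathbb E[\sup_t\bar v_2^2(t)]<\infty$, the inhomogeneous term is square-integrable and supported on a set of Lebesgue measure $\rho$. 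Applying It\^o's formula to $|\hat\theta_1(t)|^2$ (respectively $|\hat\theta_1(t)|^4$), taking expectations so the $d\widetilde W$ martingale part vanishes (here using Remark 3.5, that $\widetilde W$ is an $\mathcal F_t^{Y_1}$-Brownian motion so the stochastic integral has zero mean), and invoking Gr\"onwall's inequality together with the Burkholder--Davis--Gundy inequality, I would obtain $\sup_t\mathbb E|\hat\theta_1(t)|^2\le C\rho^2$, $\sup_t\mathbb E|\hat\theta_1(t)|^4\le C\rho^4$, and likewise the two bounds on $\mathbb E\int_0^T|\hat\Gamma_1|^2dt$ and its square; the powers of $\rho$ come precisely from $\big(\int_0^T\mathbf 1_{[\tau,\tau+\rho]}dt\big)^2=\rho^2$ and its fourth-power analogue after H\"older.

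Next I would plug the now-controlled process $\hat\theta_1(\cdot)$ into the forward equation \eqref{variational eq2}. This is a linear (random) ODE in $\xi(\cdot)$ whose inhomogeneity consists of $-B_1^2\Pi R^{-1}\hat\theta_1$ (already $O(\rho)$ in $L^2$ by the previous step) and the spike term $B_2(w_2-\bar v_2)\mathbf 1_{[\tau,\tau+\rho]}$; note the coefficient of $\hat\xi$ is the conditional expectation of $\xi$, so I would first take conditional expectations with respect to $\mathcal F_t^{Y_1^{\bar v_1,v_2}}$ to get a closed ODE for $\hat\xi$, estimate $\sup_t\mathbb E|\hat\xi(t)|^2$ and $\sup_t\mathbb E|\hat\xi(t)|^4$ by Gr\"onwall, and then feed that back to estimate $\xi$ itself. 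Using $\mathbb E|\,\hat\eta\,|^p\le\mathbb E|\eta|^p$ by Jensen keeps the conditional-mean-field term harmless. The perturbation estimates $\sup_t\mathbb E|x^{v_2^\rho}-x^{\bar v_2}-\xi|^2=0$, $\sup_t\mathbb E|\hat x^{v_2^\rho}-\hat{\bar x}-\hat\xi|^2=0$, $\sup_t\mathbb E|\hat\theta^\rho-\hat{\bar\theta}-\hat\theta_1|^2=0$ and $\mathbb E\int_0^T|\hat\Gamma^\rho-\hat{\bar\Gamma}-\hat\Gamma_1|^2dt=0$ follow because the state dynamics \eqref{leaderstate} and the backward "state" \eqref{hattheta1} are \emph{affine} in $x^{v_2},\hat x^{v_2},\hat\theta$ with no quadratic terms, so the difference process $x^{v_2^\rho}-x^{\bar v_2}-\xi$ satisfies exactly the same linear equation with zero inhomogeneous term and zero initial/terminal data, hence is identically zero; the same exact-cancellation argument applies to the backward pair $(\hat\theta,\hat\Gamma)$.

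I expect the main obstacle to be the careful bookkeeping around the two simultaneously present conditional expectations and the non-standard Brownian motion $\widetilde W(\cdot)$: one has to be sure that the stochastic-integral term $\int_0^t\hat\Gamma_1\,d\widetilde W$ genuinely has zero expectation and satisfies the isometry/BDG bounds despite $\widetilde W$ not being independent of $W,\bar W$. This is legitimate because $\widetilde W$ is an $\mathcal F_t^{Y_1^{\bar v_1,v_2}}$-adapted true Brownian motion (the innovations process of Lemma 3.4) and all integrands $\hat\Gamma_1$ are $\mathcal F_t^{Y_1^{\bar v_1,v_2}}$-adapted, so the standard BSDE a priori estimates apply verbatim in the filtration $\mathcal F_t^{Y_1^{\bar v_1,v_2}}$; I would state this explicitly before running the It\^o computations. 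The remaining work — repeated applications of It\^o's formula, H\"older, BDG and Gr\"onwall — is routine and I would only indicate it, as the statement itself points to the analogy with Lemma 1 and Lemma 2 of Xu \cite{Xu95}.
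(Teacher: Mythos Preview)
Your proposal is correct and follows the same standard route the paper has in mind: the paper gives no detailed proof but simply refers to Lemma~1 and Lemma~2 of Xu~\cite{Xu95}, which are exactly the It\^o--Gr\"onwall--BDG computations you describe for the forward and backward variational equations. Your explicit remark that the $\widetilde W$-stochastic integrals are handled in the smaller filtration $\mathcal F_t^{Y_1^{\bar v_1,v_2}}$, and your observation that the exact-zero perturbation identities come from affinity of \eqref{leaderstate} and \eqref{hattheta1}, make the argument more self-contained than the paper's one-line citation.
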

Since $\bar{v}_2(\cdot)$ is an optimal strategy of the leader, we get
\begin{equation*}
J_2(v_2^{\rho}(\cdot))-J_2(\bar{v}_2(\cdot))\geq0.
\end{equation*}
From this, the variational inequality can also be derived
\begin{equation}\label{variational ineq2}
\begin{aligned}
\mathbb{E}\bigg\{\int_0^T\big[(\bar{L}x^{\bar{v}_2}+\bar{l})\xi+(\bar{R}\bar{v}_2+\bar{r})(w_2-\bar{v}_2)1_{[\tau,\tau+\rho]}\big]dt+(\bar{M}x^{\bar{v}_2}(T)+\bar{m})\xi(T)\bigg\}\geq o(\rho).
\end{aligned}
\end{equation}
By means of $\big\langle d\widetilde{W},dW\big\rangle_t=dt$ and $\big\langle d\widetilde{W},d\bar{W}\big\rangle_t=0$, we introduce the adjoint equations as follows:
\begin{equation}\label{lambda}
\left\{
\begin{aligned}
-d\lambda(t)&=\big(\bar{L}x^{\bar{v}_2}+\bar{l}+A\lambda-B_1^2\Pi R^{-1}\hat{\lambda}\big)dt-\eta_1dW(t)-\eta_2d\bar{W}(t)-\eta_3d\widetilde{W}(t),\ t\in[0,T],\\
  \lambda(T)&=\bar{M}x^{\bar{v}_2}(T)+\bar{m},
\end{aligned}
\right.
\end{equation}
\begin{equation}\label{K}
\left\{
\begin{aligned}
dK(t)&=\big[-B_1^2R^{-1}\lambda+(A-B_1^2\Pi R^{-1})K\big]dt,\ t\in[0,T],\\
 K(0)&=0,
\end{aligned}
\right.
\end{equation}
where $(\lambda(\cdot),\eta_1(\cdot),\eta_2(\cdot))$ and $K(\cdot)$ are $\mathcal{F}_t$-adapted processes, while $\eta_3(\cdot)$ is an $\mathcal{F}_t^{Y_1}$-adapted process which will be explained later.

Applying It\^{o}'s formula to $\lambda(\cdot)\xi(\cdot)-K(\cdot)\hat{\theta}_1(\cdot)$, noting \eqref{variational eq2}, \eqref{variational eq3}, \eqref{lambda} and \eqref{K}, the variational inequality \eqref{variational ineq2} becomes
\begin{equation}\label{variational ineq3}
\begin{aligned}
\mathbb{E}\int_0^T\mathbb{E}\big[\bar{R}\bar{v}_2+\bar{r}+B_2\lambda+B_2\Pi K\big|\mathcal{F}_t^W\big](w_2-\bar{v}_2)1_{[\tau,\tau+\rho]}dt\geq o(\rho).
\end{aligned}
\end{equation}
Hence
\begin{equation}\label{maximum condition1}
\bar{R}\bar{v}_2+\bar{r}+B_2\mathbb{E}\big[\lambda|\mathcal{F}_t^W\big]+B_2\Pi\mathbb{E}\big[K|\mathcal{F}_t^W\big]=0,\ a.e.t\in[0,T],\ a.s.
\end{equation}
Since $\bar{v}_2(\cdot)\in\mathcal{U}_{ad}^l[0,T]$, form Remark 2.1, we have $\mathcal{F}_t^{Y_2^{\bar{v}_2}}=\mathcal{F}_t^W$. Then \eqref{maximum condition1} is rewritten as
\begin{equation}\label{maximum condition2}
\bar{R}\bar{v}_2+\bar{r}+B_2\check{\lambda}+B_2\Pi\check{K}=0,\ a.e.t\in[0,T],\ a.s.,
\end{equation}
where $\check{\lambda}(t):=\mathbb{E}\big[\lambda(t)|\mathcal{F}_t^{Y_2^{\bar{v}_2}}\big],\check{K}(t):=\mathbb{E}\big[K(t)|\mathcal{F}_t^{Y_2^{\bar{v}_2}}\big]$.

We summarize the above in the following theorem.

\begin{theorem}
Let {\bf (A1), (A2), (A3)} hold. Suppose that $\bar{v}_2(\cdot)$ is an optimal strategy of the leader, and $(x^{\bar{v}_2}(\cdot),\hat{\bar{\theta}}(\cdot))$ is the corresponding optimal trajectory. Then the maximum condition of $\bar{v}_2(\cdot)$ \eqref{maximum condition2} holds, where the adjoint processes $(\lambda(\cdot),\eta_1(\cdot),\eta_2(\cdot),\eta_3(\cdot))$ and $K(\cdot)$ satisfy \eqref{lambda} and \eqref{K}, respectively.
\end{theorem}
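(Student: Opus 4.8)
The statement is the maximum principle that packages the spike‑variation computation preceding it, so the plan is to make that argument rigorous and then read off \eqref{maximum condition2}. First I would perturb the optimal control $\bar v_2(\cdot)$ on a short interval $[\tau,\tau+\rho]$ by an arbitrary bounded $\mathcal{F}_\tau^{Y_2^{\bar v_2}}$-measurable $w_2$, as in the definition of $v_2^{\rho}(\cdot)$, and linearise the leader's forward–backward state system \eqref{leaderstate}. Because the ``state'' of the leader is the FBSDFE pair $(x^{v_2},\hat\theta)$, its linearisation is itself a coupled forward–backward pair, namely the variational equations \eqref{variational eq2}–\eqref{variational eq3}, in which the conditional‑mean terms $\hat\xi,\hat\theta_1,\hat\Gamma_1$ enter through the filtering. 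Using standard a priori estimates for such conditional mean‑field FBSDFEs one obtains the orders of magnitude collected in \eqref{estimate-leader}; in particular $x^{v_2^{\rho}}-x^{\bar v_2}-\xi$, $\hat x^{v_2^{\rho}}-\hat x^{\bar v_2}-\hat\xi$, $\hat\theta^{\rho}-\hat{\bar\theta}-\hat\theta_1$ and $\hat\Gamma^{\rho}-\hat{\bar\Gamma}-\hat\Gamma_1$ are of higher order, which is what lets us replace the true trajectory increments by $\xi,\hat\xi,\hat\theta_1,\hat\Gamma_1$ up to $o(\rho)$.

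Next, from optimality $J_2(v_2^{\rho}(\cdot))-J_2(\bar v_2(\cdot))\ge 0$; expanding the quadratic cost \eqref{LQ cost functional2} and discarding the purely quadratic (hence $o(\rho)$) contribution with the help of \eqref{estimate-leader} yields the first‑order variational inequality \eqref{variational ineq2}. To turn this into a pointwise condition I would introduce the adjoint system \eqref{lambda}–\eqref{K}: \eqref{lambda} is the BSDE dual to the forward equation for $x^{v_2}$, and \eqref{K} is the (forward, diffusion‑free) equation dual to the backward equation for $\hat\theta$, with terminal/initial data matching the terminal cost and the $\hat\theta(T)=m$ datum. The crucial point is the presence of the innovation Brownian motion $\widetilde W(\cdot)$: it is $\mathcal{F}_t^{Y_1}$-adapted, control‑independent by \eqref{widetildeW2}, but not independent of $W$ — one has $\langle d\widetilde W,dW\rangle_t=dt$ and $\langle d\widetilde W,d\bar W\rangle_t=0$ — so the BSDE \eqref{lambda} must carry a separate martingale term $\eta_3\,d\widetilde W$ in addition to $\eta_1\,dW$ and $\eta_2\,d\bar W$, with $\eta_3$ only $\mathcal{F}_t^{Y_1}$-adapted, and the cross‑variation between $\eta_3\,d\widetilde W$ and the $c\,dW$ term of $x^{v_2}$ (and between $\hat\Gamma_1\,d\widetilde W$ in \eqref{variational eq3} and whatever diffusion it pairs with) must be accounted for when applying Itô's formula. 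Applying Itô to $\lambda(\cdot)\xi(\cdot)-K(\cdot)\hat\theta_1(\cdot)$ on $[0,T]$ and taking expectations, all ``homogeneous'' terms cancel by the very choice of \eqref{lambda}–\eqref{K}, leaving a duality identity that expresses $\mathbb{E}[(\bar M x^{\bar v_2}(T)+\bar m)\xi(T)+\int_0^T(\bar L x^{\bar v_2}+\bar l)\xi\,dt]$ in terms of $\mathbb{E}\int_{\tau}^{\tau+\rho}B_2(\lambda+\Pi K)(w_2-\bar v_2)\,dt$. Substituting into \eqref{variational ineq2} gives \eqref{variational ineq3}, the inner conditional expectation $\mathbb{E}[\,\cdot\,|\mathcal{F}_t^W]$ appearing because $w_2-\bar v_2$ and $1_{[\tau,\tau+\rho]}$ are $\mathcal{F}_t^W$-measurable (recall $w_2$ is $\mathcal{F}_\tau^{Y_2^{\bar v_2}}$-measurable and $\mathcal{F}_t^{Y_2^{\bar v_2}}=\mathcal{F}_t^W$ by Remark 2.1).

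Finally I would divide \eqref{variational ineq3} by $\rho$ and let $\rho\downarrow 0$; by the Lebesgue differentiation theorem and the arbitrariness of $\tau\in[0,T)$ and of the bounded $\mathcal{F}_\tau^{Y_2^{\bar v_2}}$-measurable $w_2$, the inequality forces the bracket to vanish, which is \eqref{maximum condition1}. Invoking once more $\mathcal{F}_t^{Y_2^{\bar v_2}}=\mathcal{F}_t^W$ from Remark 2.1, we get $\mathbb{E}[\lambda|\mathcal{F}_t^W]=\check\lambda$ and $\mathbb{E}[K|\mathcal{F}_t^W]=\check K$, which is exactly \eqref{maximum condition2}. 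The main obstacle I anticipate is the correct bookkeeping of the adjoint equations in the presence of the correlated innovation process $\widetilde W$ — identifying the $\eta_3\,d\widetilde W$ term and its adaptedness, and checking that all cross‑variation contributions coming from $\langle d\widetilde W,dW\rangle_t=dt$ cancel in the Itô expansion of $\lambda\xi-K\hat\theta_1$; by comparison the estimates in \eqref{estimate-leader} and the passage from the variational inequality to the pointwise condition are routine.
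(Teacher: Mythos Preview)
Your proposal is correct and follows essentially the same route as the paper: spike variation of $\bar v_2$, the variational system \eqref{variational eq2}--\eqref{variational eq3} with the estimates \eqref{estimate-leader}, the first-order inequality \eqref{variational ineq2}, It\^{o}'s formula applied to $\lambda(\cdot)\xi(\cdot)-K(\cdot)\hat\theta_1(\cdot)$ to obtain \eqref{variational ineq3}, and then the passage to \eqref{maximum condition1}--\eqref{maximum condition2} via Remark~2.1. One minor simplification: since $\xi$ in \eqref{variational eq2} and $K$ in \eqref{K} carry no diffusion terms, the cross-variation bookkeeping you anticipate (between $\eta_3\,d\widetilde W$ and $c\,dW$, etc.) does not actually arise in the It\^{o} expansion of $\lambda\xi-K\hat\theta_1$; the role of $\langle d\widetilde W,dW\rangle_t=dt$ here is only to motivate the form of the adjoint BSDE \eqref{lambda}, not to produce extra terms in the duality computation.
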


\begin{Remark}
We firstly note that the backward adjoint equation \eqref{lambda} is not only driven by the independent standard Brownian motions $W(\cdot)$ and $\bar{W}(\cdot)$, but also by the innovation process $\widetilde{W}(\cdot)$. However, as we have mentioned before, $\widetilde{W}(\cdot)$ is not independent of $W$ or $\bar{W}$ under the probability measure $P$, which is a nontrivial phenomenon due to the partially observable information. Therefore, secondly, it is necessary to guarantee the unique solvability of \eqref{lambda} and \eqref{K}. We aim to make them clear in the next subsection.
\end{Remark}

We can also obtain the following sufficient conditions for the optimality of the leader.

\begin{theorem}
Let {\bf (A1), (A2), (A3)} hold and let $\bar{v}_2(\cdot)\in\mathcal{U}_{ad}^l[0,T]$ satisfy \eqref{maximum condition2}
where $(\lambda(\cdot),\eta_1(\cdot),\eta_2(\cdot),\eta_3(\cdot))$ is the solution to \eqref{lambda} and $K(\cdot)$ is the solution to \eqref{K}. Then $\bar{v}_2(\cdot)$ is really an optimal control of the leader.
\end{theorem}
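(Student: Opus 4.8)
The plan is to mimic the structure of the sufficiency proof for the follower (Theorem 3.3), but now carried out on the \emph{forward-backward} ``state'' system \eqref{leaderstate} of the leader, which couples the forward equation for $x^{v_2}(\cdot)$ with the backward equation for $\hat{\theta}(\cdot)$. First I would fix an arbitrary admissible control $v_2(\cdot)\in\mathcal{U}_{ad}^l[0,T]$ and write the cost difference $J_2(v_2(\cdot))-J_2(\bar{v}_2(\cdot))$ using the quadratic form of \eqref{LQ cost functional2}. By completing the square and exploiting the nonnegativity assumptions $\bar{L}(\cdot)\geq 0$, $\bar{R}(\cdot)\geq 0$, $\bar{M}\geq 0$ from \textbf{(A2)}, I would split the difference into a manifestly nonnegative ``quadratic'' part $\mathrm{I}\geq 0$ (involving $(x^{v_2}-x^{\bar{v}_2})^2$, $(v_2-\bar{v}_2)^2$ and $(x^{v_2}(T)-x^{\bar{v}_2}(T))^2$) and a ``cross'' part $\mathrm{II}$ that is linear in the variations; the goal is to show $\mathrm{II}=0$.

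The heart of the argument is a duality computation: I would apply It\^o's formula to $\lambda(\cdot)\big(x^{v_2}(\cdot)-x^{\bar{v}_2}(\cdot)\big)-K(\cdot)\big(\hat{\theta}^{v_2}(\cdot)-\hat{\bar\theta}(\cdot)\big)$ on $[0,T]$, taking expectations. Here the key point, already isolated in Remark 3.5 and in the derivation of \eqref{variational ineq3}, is that the driving noises $W$, $\bar{W}$ and $\widetilde{W}$ are handled via the bracket relations $\langle d\widetilde{W},dW\rangle_t=dt$ and $\langle d\widetilde{W},d\bar{W}\rangle_t=0$, together with the fact that $\widetilde{W}$ is the innovation process and therefore $\mathbb{E}\int_0^T(\cdots)d\widetilde{W}(t)=0$ when the integrand is suitably adapted. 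After cancelling the terms coming from the dynamics of $\lambda$, $K$ via \eqref{lambda}, \eqref{K} against those coming from the (perturbed minus optimal) dynamics in \eqref{leaderstate}, the boundary terms reproduce exactly $\mathbb{E}[(\bar M x^{\bar v_2}(T)+\bar m)(x^{v_2}(T)-x^{\bar v_2}(T))]$ and the initial term vanishes since $x^{v_2}(0)=x^{\bar v_2}(0)=x_0$ and $K(0)=0$. Substituting this identity into $\mathrm{II}$, and using conditioning on $\mathcal{F}_t^W=\mathcal{F}_t^{Y_2^{\bar v_2}}$ (Remark 2.1) together with the fact that $v_2(\cdot)-\bar v_2(\cdot)$ is $\mathcal{F}_t^W$-adapted, I would reduce $\mathrm{II}$ to
\[
\mathrm{II}=\mathbb{E}\int_0^T\big(\bar R\bar v_2+\bar r+B_2\check\lambda+B_2\Pi\check K\big)(v_2-\bar v_2)\,dt,
\]
which is zero by the maximum condition \eqref{maximum condition2}. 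Hence $J_2(v_2(\cdot))-J_2(\bar v_2(\cdot))=\mathrm{I}\geq 0$ for all admissible $v_2(\cdot)$, so $\bar v_2(\cdot)$ is optimal.

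The main obstacle I anticipate is bookkeeping the It\^o expansion cleanly in the presence of three correlated martingale drivers: one must verify that the $d\widetilde{W}$ cross-terms (e.g.\ those pairing $\eta_1$ with the $c\,dW$ part and $\eta_3$ with itself and with $c\,dW$) combine correctly under the brackets $\langle d\widetilde W, dW\rangle_t=dt$, $\langle d\widetilde W,d\bar W\rangle_t=0$, and that the $\hat\theta$-terms in the backward component, which are driven only by $\widetilde{W}$, pair against $K$ (also a finite-variation process by \eqref{K}) without leaving residual stochastic integrals. A secondary technical point is justifying that $\mathbb{E}\int_0^T(\cdots)d\widetilde W(t)=0$, i.e.\ that the relevant integrands lie in $L^2_{\mathcal{F}^{Y_1}}(0,T;\mathbb{R})$, which follows from \textbf{(A1)}--\textbf{(A3)}, the estimates in Lemma 3.5-type bounds, and standard BSDE integrability; and that conditioning $\lambda,K$ on $\mathcal{F}_t^W$ inside the integral against the $\mathcal{F}_t^W$-adapted control variation is legitimate by the tower property. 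Once these points are in place, the inequality $\mathrm{I}\ge 0$ is immediate from \textbf{(A2)} and the proof is complete.
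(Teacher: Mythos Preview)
Your plan is essentially the paper's own proof: split $J_2(v_2)-J_2(\bar v_2)=\mathrm{I}+\mathrm{II}$, show $\mathrm{I}\ge 0$ from \textbf{(A2)}, and kill $\mathrm{II}$ by applying It\^o to $\lambda(\cdot)(x^{v_2}-x^{\bar v_2})(\cdot)-K(\cdot)(\hat\theta-\hat{\bar\theta})(\cdot)$ and invoking \eqref{maximum condition2}. One point deserves more care than your outline suggests. After the It\^o expansion, the drift of $d[\lambda(x^{v_2}-x^{\bar v_2})]$ produces the pair
\[
-\,\lambda\,B_1^2\Pi R^{-1}\big(\hat x^{v_2}-\hat x^{\bar v_2}\big)
\qquad\text{and}\qquad
+\,B_1^2\Pi R^{-1}\,\hat\lambda\,\big(x^{v_2}-x^{\bar v_2}\big),
\]
and these do \emph{not} cancel ``from the dynamics'': one carries $\lambda$ against a hatted state increment, the other carries $\hat\lambda$ against an unhatted one. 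They only cancel after a tower-property step through the \emph{follower's} filtration $\mathcal{F}_t^{Y_1}$ (equivalently $\mathcal{F}_t^{Y_1^0}$ by Lemma~\ref{FollowerLemma}), using that $\hat x^{v_2}-\hat x^{\bar v_2}$ and $\hat\lambda$ are $\mathcal{F}_t^{Y_1}$-measurable so that
\[
\mathbb{E}\big[\lambda(\hat x^{v_2}-\hat x^{\bar v_2})\big]
=\mathbb{E}\big[\hat\lambda(\hat x^{v_2}-\hat x^{\bar v_2})\big]
=\mathbb{E}\big[\hat\lambda(x^{v_2}-x^{\bar v_2})\big].
\]
The paper makes this explicit by writing $\mathrm{II}$ with a nested conditioning $\mathbb{E}\big[\mathbb{E}[\,\cdot\,|\mathcal{F}_t^{Y_1^0}]\,\big|\,\mathcal{F}_t^W\big]$: the inner conditioning on $\mathcal{F}_t^{Y_1^0}$ eliminates the hat-mismatch terms above (and likewise the $K(\hat\theta-\hat{\bar\theta})$ residuals), while the outer conditioning on $\mathcal{F}_t^W=\mathcal{F}_t^{Y_2^{\bar v_2}}$ is what lets you pull $v_2-\bar v_2$ out and reduce to $\bar R\bar v_2+\bar r+B_2\check\lambda+B_2\Pi\check K$. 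Your outline only mentions conditioning on $\mathcal{F}_t^W$; make sure you also invoke the $\mathcal{F}_t^{Y_1}$ layer, otherwise $\mathrm{II}$ will not collapse to the single integral you wrote.
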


\begin{proof}
For any $v_2(\cdot)\in\mathcal{U}_{ad}^l[0,T]$, we rewrite the cost functional \eqref{LQ cost functional2} as
\begin{equation*}
J_2(v_2(\cdot))-J_2(\bar{v}_2(\cdot))=\textrm{I}+\textrm{II},
\end{equation*}
where
\begin{equation}\label{i}
\begin{aligned}
\textrm{I}&=\frac{1}{2}\mathbb{E}\bigg\{\int_0^T\big[\bar{L}(t)(x^{v_2}(t)-x^{\bar{v}_2}(t))^2+\bar{R}(t)(v_2(t)-\bar{v}_2(t))^2\big]dt\\
&\qquad\qquad+\bar{M}(x^{v_2}(T)-x^{\bar{v}_2}(T))^2\bigg\},
\end{aligned}
\end{equation}
\begin{equation}\label{ii}
\begin{aligned}
\textrm{II}&=\mathbb{E}\bigg\{\int_0^T\big[\big(\bar{l}(t)+\bar{L}(t)x^{\bar{v}_2}(t)\big)(x^{v_2}(t)-x^{\bar{v}_2}(t))+\big(\bar{R}(t)\bar{v}_2+\bar{r}(t)\big)(v_2(t)-\bar{v}_2(t))\big]dt\\
&\qquad\qquad+\big(\bar{M}x^{\bar{v}_2}(T)+\bar{m}\big)(x^{v_2}(T)-x^{\bar{v}_2}(T))\bigg\}.
\end{aligned}
\end{equation}
Applying It\^{o}'s formula to $\lambda(\cdot)(x^{v_2}(\cdot)-x^{\bar{v}_2}(\cdot))-K(\cdot)(\hat{\theta}(\cdot)-\hat{\bar{\theta}}(\cdot))$, we have
\begin{equation}\label{terminal}
\begin{aligned}
&\mathbb{E}\big[\big(\bar{M}x^{\bar{v}_2}(T)+\bar{m}\big)\big(x^{v_2}(T)-x^{\bar{v}_2}(T)\big)\big]\\
&=\mathbb{E}\int_0^T\Big\{\lambda\big[A(x^{v_2}-x^{\bar{v}_2})-B_1^2\Pi R^{-1}(\hat{x}^{v_2}-\hat{x}^{\bar{v}_2})-B_1^2R^{-1}(\hat{\theta}-\hat{\bar{\theta}})+B_2(v_2-\bar{v}_2)\big]\\
&\qquad\qquad-\big(\bar{L}x^{\bar{v}_2}+\bar{l}+A\lambda-B_1^2\Pi R^{-1}\hat{\lambda}\big)(x^{v_2}-x^{\bar{v}_2})+K\big[(A-B_1^2\Pi R^{-1})(\hat{\theta}-\hat{\bar{\theta}})\\
&\qquad\qquad+B_2\Pi(v_2-\bar{v}_2)\big]-\big[-B_1^2R^{-1}\lambda+(A-B_1^2\Pi R^{-1})K\big](\hat{\theta}-\hat{\bar{\theta}})\Big\}dt.
\end{aligned}
\end{equation}
Substituting \eqref{terminal} into \eqref{ii}, we can get
\begin{equation*}
\begin{aligned}
\textrm{II}&=\mathbb{E}\int_0^T\Big\{(\bar{r}+\bar{R}\bar{v}_2)(v_2-\bar{v}_2)+\lambda\big[-B_1^2\Pi R^{-1}(\hat{x}^{v_2}-\hat{x}^{\bar{v}_2})-B_1^2R^{-1}(\hat{\theta}-\hat{\bar{\theta}})+B_2(v_2-\bar{v}_2)\big]\\
&\qquad+B_1^2\Pi R^{-1}\hat{\lambda}(x^{v_2}-x^{\bar{v}_2})+K\big[(A-B_1^2\Pi R^{-1})(\hat{\theta}-\hat{\bar{\theta}})+B_2\Pi(v_2-\bar{v}_2)\big]\\
&\qquad-\big[-B_1^2R^{-1}\lambda+(A-B_1^2\Pi R^{-1})K\big](\hat{\theta}-\hat{\bar{\theta}})\Big\}dt\\
&=\mathbb{E}\int_0^T\mathbb{E}\Big[(\bar{r}+\bar{R}\bar{v}_2)(v_2-\bar{v}_2)+\lambda\big[-B_1^2\Pi R^{-1}(\hat{x}^{v_2}-\hat{x}^{\bar{v}_2})-B_1^2R^{-1}(\hat{\theta}-\hat{\bar{\theta}})+B_2(v_2-\bar{v}_2)\big]\\
&\qquad+B_1^2\Pi R^{-1}\hat{\lambda}(x^{v_2}-x^{\bar{v}_2})+K\big[(A-B_1^2\Pi R^{-1})(\hat{\theta}-\hat{\bar{\theta}})+B_2\Pi(v_2-\bar{v}_2)\big]\\
&\qquad-\big[-B_1^2R^{-1}\lambda+(A-B_1^2\Pi R^{-1})K\big](\hat{\theta}-\hat{\bar{\theta}})\Big|\mathcal{F}_t^W\Big]dt\\
\end{aligned}
\end{equation*}
\begin{equation*}
\begin{aligned}
&=\mathbb{E}\int_0^T\mathbb{E}\big[\bar{r}+\bar{R}\bar{v}_2+B_2\lambda+B_2\Pi K\big|\mathcal{F}_t^W\big](v_2-\bar{v}_2)dt\\
&\qquad+\mathbb{E}\int_0^T\mathbb{E}\Big[\mathbb{E}\big[\lambda\big\{-B_1^2\Pi R^{-1}(\hat{x}^{v_2}-\hat{x}^{\bar{v}_2})-B_1^2R^{-1}(\hat{\theta}-\hat{\bar{\theta}})\big\}+B_1^2\Pi R^{-1}\hat{\lambda}(x^{v_2}-x^{\bar{v}_2})\\
&\qquad\quad+(A-B_1^2\Pi R^{-1})(\hat{\theta}-\hat{\bar{\theta}})K-\big\{-B_1^2R^{-1}\lambda+(A-B_1^2\Pi R^{-1})K\big\}(\hat{\theta}-\hat{\bar{\theta}})\big|\mathcal{F}_t^{Y_1^0}\big]\Big|\mathcal{F}_t^W\Big]dt.
\end{aligned}
\end{equation*}
By Lemmas 3.1, Remark 2.1 and \eqref{maximum condition2}, it is obvious that $\textrm{II}=0$. Since $\textrm{I}\geq0$, we complete the proof.
\end{proof}

\subsubsection{Optimal State Filtering and Feedback Form}

We notice that the leader face the LQ partially observed optimal control problem of CMFFBSDEs. Therefore, to obtain the optimal state filtering feedback representation, we put \eqref{leaderstate}, \eqref{lambda} and \eqref{K} together:
\begin{equation}\label{together}
\left\{
\begin{aligned}
      dx^{\bar{v}_2}(t)&=\big[Ax^{\bar{v}_2}-B_1^2\Pi R^{-1}\hat{x}^{\bar{v}_2}-B_1^2R^{-1}\hat{\bar{\theta}}-B_1R^{-1}r+B_2\bar{v}_2+\alpha\big]dt\\
                       &\quad+cdW(t)+\bar{c}d\bar{W}(t),\\
                  dK(t)&=\big[-B_1^2R^{-1}\lambda+(A-B_1^2\Pi R^{-1})K\big]dt,\\
           -d\lambda(t)&=\big(\bar{L}x^{\bar{v}_2}+\bar{l}+A\lambda-B_1^2\Pi R^{-1}\hat{\lambda}\big)dt-\eta_1dW(t)-\eta_2d\bar{W}(t)-\eta_3d\widetilde{W}(t),\\
-d\hat{\bar{\theta}}(t)&=\big[(A-B_1^2\Pi R^{-1})\hat{\bar{\theta}}+B_2\Pi\bar{v}_2-B_1\Pi R^{-1}r+\alpha\Pi+l\big]dt-\hat{\bar{\Gamma}}d\widetilde{W}(t),\ t\in[0,T],\\
       x^{\bar{v}_2}(0)&=x_0,\ K(0)=0,\ \lambda(T)=\bar{M}x^{\bar{v}_2}(T)+\bar{m},\ \hat{\bar{\theta}}(T)=m,
\end{aligned}
\right.
\end{equation}
and let
\begin{equation}\label{notation}
X=
\begin{pmatrix}
x^{\bar{v}_2}\\
K
\end{pmatrix},\ \
Y=
\begin{pmatrix}
\lambda\\
\hat{\bar{\theta}}
\end{pmatrix},\ \
Z_1=
\begin{pmatrix}
\eta_1\\
0
\end{pmatrix},\ \
Z_2=
\begin{pmatrix}
\eta_2\\
0
\end{pmatrix},\ \
Z_3=
\begin{pmatrix}
\eta_3\\
\hat{\bar{\Gamma}}
\end{pmatrix},\ \
\end{equation}
and
\begin{equation*}
\left\{
\begin{aligned}
&\mathcal{A}_1=
\begin{pmatrix}
A&0\\
0&A-B_1^2\Pi R^{-1}
\end{pmatrix},\ \
\mathcal{A}_2=
\begin{pmatrix}
-B_1^2\Pi R^{-1}&0\\
0&0
\end{pmatrix},\ \
\mathcal{A}_3=
\begin{pmatrix}
\bar{L}&0\\
0&0
\end{pmatrix},\\
&\mathcal{B}_1=
\begin{pmatrix}
0&-B_1^2R^{-1}\\
-B_1^2R^{-1}&0
\end{pmatrix},\ \
\mathcal{C}_1=
\begin{pmatrix}
-B_1R^{-1}r+\alpha\\
0
\end{pmatrix},\\
&\mathcal{C}_2=
\begin{pmatrix}
\bar{l}\\
-B_1\Pi R^{-1}r+\alpha\Pi+l
\end{pmatrix},\ \
\mathcal{D}_1=
\begin{pmatrix}
B_2\\
0
\end{pmatrix},\ \
\mathcal{D}_2=
\begin{pmatrix}
0\\
B_2\Pi
\end{pmatrix},\ \
\Sigma_1=
\begin{pmatrix}
c\\
0
\end{pmatrix},\\
&\Sigma_2=
\begin{pmatrix}
\bar{c}\\
0
\end{pmatrix},\ \
X_0=
\begin{pmatrix}
x_0\\
0
\end{pmatrix},\ \
\mathcal{M}=
\begin{pmatrix}
\bar{M}&0\\
0&0
\end{pmatrix},\ \
\mathcal{M}_T=
\begin{pmatrix}
\bar{m}\\
m
\end{pmatrix}.
\end{aligned}
\right.
\end{equation*}
Then \eqref{together} can be rewritten as:
\begin{equation}\label{together2}
\left\{
\begin{aligned}
 dX(t)&=\big[\mathcal{A}_1X+\mathcal{A}_2\hat{X}+\mathcal{B}_1Y+\mathcal{D}_1\bar{v}_2+\mathcal{C}_1\big]dt+\Sigma_1dW(t)+\Sigma_2d\bar{W}(t),\\
-dY(t)&=\big[\mathcal{A}_3X+\mathcal{A}_1Y+\mathcal{A}_2\hat{Y}+\mathcal{D}_2\bar{v}_2+\mathcal{C}_2\big]dt-Z_1dW(t)-Z_2d\bar{W}(t)\\
      &\quad-Z_3d\widetilde{W}(t),\ t\in[0,T],\\
  X(0)&=X_0,\ Y(T)=\mathcal{M}X(T)+\mathcal{M}_T.
\end{aligned}
\right.
\end{equation}
Meanwhile, the optimality condition of $\bar{v}_2(\cdot)$ \eqref{maximum condition2} becomes
\begin{equation}\label{maximum condition3}
\bar{R}\bar{v}_2+\bar{r}+\mathcal{D}_1^\top\check{Y}+\mathcal{D}_2^\top\check{X}=0,\ a.e.t\in[0,T],\ a.s.
\end{equation}

If we assume the following\\
{\bf(A4)}
$\bar{R}^{-1}(\cdot)$ is a uniformly bounded, deterministic function.\\
Then substituting \eqref{maximum condition3} into \eqref{together2}, we have
\begin{equation}\label{together3}
\left\{
\begin{aligned}
 dX(t)&=\big[\mathcal{A}_1X+\mathcal{A}_2\hat{X}+\mathcal{B}_1Y-\mathcal{D}_1\bar{R}^{-1}\mathcal{D}_1^\top\check{Y}-\mathcal{D}_1\bar{R}^{-1}\mathcal{D}_2^\top\check{X}-\mathcal{D}_1\bar{R}^{-1}\bar{r}+\mathcal{C}_1\big]dt\\
      &\quad+\Sigma_1dW(t)+\Sigma_2d\bar{W}(t),\\
-dY(t)&=\big[\mathcal{A}_3X+\mathcal{A}_1Y+\mathcal{A}_2\hat{Y}-\mathcal{D}_2\bar{R}^{-1}\mathcal{D}_1^\top\check{Y}-\mathcal{D}_2\bar{R}^{-1}\mathcal{D}_2^\top\check{X}-\mathcal{D}_2\bar{R}^{-1}\bar{r}+\mathcal{C}_2\big]dt\\
      &\quad-Z_1dW(t)-Z_2d\bar{W}(t)-Z_3d\widetilde{W}(t),\ t\in[0,T],\\
  X(0)&=X_0,\ Y(T)=\mathcal{M}X(T)+\mathcal{M}_T.
\end{aligned}
\right.
\end{equation}

We note that equation \eqref{together3}, different from the classical mean-field FBSDE, is a non-standard coupled FBSDE with two kinds of conditional mean-field forms, which is new and more complicated. And the unique solvability of \eqref{together3} is equivalent to that of \eqref{together}. Before giving the solvability of \eqref{together3}, we first discuss a special FBSDE with filtering terms and prove the existence and uniqueness of its solutions.

We consider a fully coupled FBSDE with filtering terms as follows:
\begin{equation}\label{FBSDEF}
\left\{
\begin{aligned}
 dX(t)&=b(t,X,Y,Z_1,Z_2,\hat{X},\hat{Y},\hat{Z}_1,\hat{Z}_2)dt+\sigma(t,X,Y,Z_1,Z_2,\hat{X},\hat{Y},\hat{Z}_1,\hat{Z}_2)dW(t)\\
      &\quad+\bar{\sigma}(t,X,Y,Z_1,Z_2,\hat{X},\hat{Y},\hat{Z}_1,\hat{Z}_2)d\bar{W}(t),\\
-dY(t)&=g(t,X,Y,Z_1,Z_2,\hat{X},\hat{Y},\hat{Z}_1,\hat{Z}_2)dt-Z_1dW(t)-Z_2d\bar{W}(t),\quad t\in[0,T],\\
  X(0)&=X_0,\ Y(T)=\Phi(X(T)),
\end{aligned}
\right.
\end{equation}
where $\hat{X}=\mathbb{E}[X|\mathcal{F}_t^{W}],\hat{Y}=\mathbb{E}[Y|\mathcal{F}_t^{W}],\hat{Z}_1=\mathbb{E}[Z_1|\mathcal{F}_t^{W}]$ and $\hat{Z}_2=\mathbb{E}[Z_2|\mathcal{F}_t^{W}]$. $b:[0,T]\times \mathbb{R}^2\times \mathbb{R}^2\times \mathbb{R}^2\times \mathbb{R}^2\times \mathbb{R}^2\times \mathbb{R}^2\times \mathbb{R}^2\times \mathbb{R}^2\rightarrow \mathbb{R}^2,\sigma:[0,T]\times \mathbb{R}^2\times \mathbb{R}^2\times \mathbb{R}^2\times \mathbb{R}^2\times \mathbb{R}^2\times \mathbb{R}^2\times \mathbb{R}^2\times \mathbb{R}^2\rightarrow \mathbb{R}^2,\bar{\sigma}:[0,T]\times \mathbb{R}^2\times \mathbb{R}^2\times \mathbb{R}^2\times \mathbb{R}^2\times \mathbb{R}^2\times \mathbb{R}^2\times \mathbb{R}^2\times \mathbb{R}^2\rightarrow \mathbb{R}^2$ and $\Phi:\mathbb{R}^2\rightarrow \mathbb{R}^2$ are given functions.
We are given a $m\times n$ full rank matrix $G$ and use the notations:
\begin{equation}
u=\begin{pmatrix}
X\\
Y\\
Z_1\\
Z_2
\end{pmatrix},\ \hat{u}=\begin{pmatrix}
\hat{X}\\
\hat{Y}\\
\hat{Z}_1\\
\hat{Z}_2
\end{pmatrix},\ A(t,u,\hat{u})=\begin{pmatrix}
-G^\top g\\
Gb\\
G\sigma\\
G\bar{\sigma}
\end{pmatrix}(t,u,\hat{u}),
\end{equation}
where we have $m=n=2$ and $G=\mathbb{I}_2$. We assume that

(i) $A(t,u,\hat{u})$ is uniformly Lipschitz with respect to $u,\hat{u}$, that is, there exists a constant $C$ such that
$|A(t,u,\hat{u})-A(t,u',\hat{u}')|\leq C(|u-u'|+|\hat{u}-\hat{u}'|)$,

(ii) for each $u,\hat{u}$, $A(\cdot,u,\hat{u})$ is in $L^2_{\mathcal{F}}(0,T;\mathbb{R}^2\times \mathbb{R}^2\times \mathbb{R}^2\times \mathbb{R}^2\times \mathbb{R}^2\times \mathbb{R}^2\times \mathbb{R}^2\times \mathbb{R}^2)$,

(iii) $\Phi(x)$ is uniformly Lipschitz with respect to $x\in \mathbb{R}^2$, that is, there exists a constant $C_1$ such that $\Phi(x)-\Phi(x')\leq C_1|x-x'|$,

(iv) for each $x$, $\Phi(x)$ is in $L^2_{\mathcal{F}_T}(\Omega,\mathbb{R}^2)$.

The following monotonic conditions are our main assumptions.
For any $\hat{u}$, we have
\begin{equation}
\begin{aligned}
&\langle A(t,u,\hat{u})-A(t,u',\hat{u}),u-u'\rangle\leq-\beta|u-u'|^2,\\
&\langle \Phi(X)-\Phi(X'),X-X'\rangle\geq\mu_1|\tilde{X}|^2,\\
&\forall\ u=(X,Y,Z_1,Z_2),\ u'=(X',Y',Z'_1,Z'_2),\\
&\quad \tilde{X}=X-X',\ \tilde{Y}=Y-Y',\ \tilde{Z}_1=Z_1-Z'_1,\ \tilde{Z}_2=Z_2-Z'_2,
\end{aligned}
\end{equation}
where $\mu_1,\beta$ are given positive constants.

For any $(x(\cdot),y(\cdot),z_1(\cdot),z_2(\cdot))\in L^2_{\mathcal{F}}(0,T;\mathbb{R}^{2+2+2+2})$, the following fully coupled FBSDE
\begin{equation}
\left\{
\begin{aligned}
 dX(t)&=b(t,X,Y,Z_1,Z_2,\hat{x},\hat{y},\hat{z}_1,\hat{z}_2)dt+\sigma(t,X,Y,Z_1,Z_2,\hat{x},\hat{y},\hat{z}_1,\hat{z}_2)dW(t)\\
      &\quad+\bar{\sigma}(t,X,Y,Z_1,Z_2,\hat{x},\hat{y},\hat{z}_1,\hat{z}_2)d\bar{W}(t),\\
-dY(t)&=g(t,X,Y,Z_1,Z_2,\hat{x},\hat{y},\hat{z}_1,\hat{z}_2)dt-Z_1dW(t)-Z_2d\bar{W}(t),\quad t\in[0,T],\\
  X(0)&=X_0,\ Y(T)=\Phi(X(T)),
\end{aligned}
\right.
\end{equation}
admits unique solution $(X(\cdot),Y(\cdot),Z_1(\cdot),Z_2(\cdot))\in L^2_{\mathcal{F}}(0,T;\mathbb{R}^{2+2+2+2})$(see \cite{PW99}). We then introduce a mapping and denote $U=(X,Y,Z_1,Z_2)=I(u),U'=(X',Y',Z'_1,Z'_2)=I(u'),(\tilde{x},\tilde{y},
\tilde{z}_1,\tilde{z}_2)=(x-x',y-y',z_1-z'_2,z_2-z'_2)$ and $(\tilde{X},\tilde{Y},
\tilde{Z}_1,\tilde{Z}_2)=(X-X',Y-Y',Z_1-Z'_2,Z_2-Z'_2)$.

Applying It\^{o}'s formula to $\langle\tilde{X}(\cdot),\tilde{Y}(\cdot)\rangle$, we have
\begin{equation}
\begin{aligned}
\mathbb{E}&\big[\langle \tilde{X}_T, \Phi(X_T)-\Phi(X'_T)\rangle\big]=\mathbb{E}\int_0^T\big\langle A(t,U,\hat{u})-A(t,U',\hat{u}'),\tilde{U}\big\rangle dt\\
&=\mathbb{E}\int_0^T\big\langle A(t,U,\hat{u})-A(t,U,\hat{u}')+A(t,U,\hat{u}')-A(t,U',\hat{u}'),\tilde{U}\big\rangle dt\\
&\leq\mathbb{E}\int_0^T\big\langle C|\hat{u}-\hat{u}'|,\tilde{U}\rangle+\langle A(t,U,\hat{u}')-A(t,U',\hat{u}'),\tilde{U}\big\rangle dt\\
&\leq\mathbb{E}\int_0^T\bigg[\frac{\delta}{2}|\tilde{U}|^2+\frac{C^2}{2\delta}|\hat{u}-\hat{u}'|^2-\beta|\tilde{U}|^2\bigg]dt.
\end{aligned}
\end{equation}
By the monotonic assumption, we have
\begin{equation}
\mathbb{E}[\mu_1|\tilde{X}_T|^2]\leq\mathbb{E}\int_0^T\bigg[\frac{\delta}{2}|\tilde{U}|^2+\frac{C^2}{2\delta}|\hat{u}-\hat{u}'|^2-\beta|\tilde{U}|^2\bigg]dt,
\end{equation}
and
\begin{equation}
\begin{aligned}
\mathbb{E}\bigg[(\beta-\frac{\delta}{2})\mathbb{E}\bigg[\int_0^T|\tilde{U}|^2dt|\mathcal{F}_t\bigg]\bigg]&\leq\mathbb{E}\bigg[\frac{C^2}{2\delta}\mathbb{E}\bigg[\int_0^T|\hat{\tilde{u}}|^2dt|\mathcal{F}_t\bigg]\bigg]\\
&\leq\mathbb{E}\bigg[\frac{C^2}{2\delta}\mathbb{E}\bigg[\int_0^T|\tilde{u}|^2dt|\mathcal{F}_t\bigg]\bigg].
\end{aligned}
\end{equation}
If $\delta\in[\beta-\sqrt{\beta^2-2C^2},\beta+\sqrt{\beta^2-2C^2}]\cap(0,2\beta)$ and $\beta\geq\sqrt{2}C$, then we have
\begin{equation}
\mathbb{E}\int_0^T|\tilde{U}|^2dt\leq\frac{1}{2}\mathbb{E}\int_0^T|\tilde{u}|^2dt.
\end{equation}
Therefore, we have the mapping $I$ is a contraction on $L^2_{\mathcal{F}}(0,T;\mathbb{R}^{2+2+2+2})$. Due to the fixed point theorem, there is a unique fixed point $(X(\cdot),Y(\cdot),Z_1(\cdot),Z_2(\cdot))\in L^2_{\mathcal{F}}(0,T;\mathbb{R}^{2+2+2+2})$ such that $I(X,Y,Z_1,Z_2)=(X,Y,Z_1,Z_2)$. Then we prove $(X(\cdot),Y(\cdot),Z_1(\cdot),Z_2(\cdot))$ satisfies the fully-coupled FBSDE with filtering \eqref{FBSDEF}.

\begin{Remark}
Noting \eqref{together3} and \eqref{FBSDEF}, we can regard \eqref{FBSDEF} as a special case of \eqref{together3} from the perspective of filtering. \eqref{FBSDEF}, whose solvability has been guaranteed, can be derived by considering the following case: $\mathcal{F}_t^{Y_1}=\mathcal{F}_t^{Y_2}=\mathcal{F}_t^W$, that is, both the follower and the leader can only observe the same partial information, where the innovation process $\widetilde{W}$ disappears, and the only noise sources come from two independent $\mathcal{F}_t$-Brownian motions $W,\bar{W}$. However, the new Brownian motion $\widetilde{W}$ cause the intrinsic difficult for obtaining the existence and uniqueness of the solution to \eqref{together3} with the similar proof in the above. So we give an another way to treat the problem in the following.
\end{Remark}

\begin{Remark}
As we have analyzed, the equation \eqref{together3} contains not only $(X(\cdot),Y(\cdot))$ itself but also its two kinds of filtering form, $(\hat{X}(\cdot),\hat{Y}(\cdot))$ and $(\check{X}(\cdot),\check{Y}(\cdot))$, therefore, we fail to obtain the existence and uniqueness of the solution to it directly because of its complex coupling. Fortunately, we can utilize an indirect method to obtain its unique solvability of \eqref{together3}. We can concisely formulate the new combined idea by considering the following two steps: decoupling \eqref{together3} for its existence and then prove its uniqueness.
\end{Remark}

To obtain the existence of solution to \eqref{together3}, we decouple it first by setting
\begin{equation}\label{Y3X}
Y(t)=\Pi_1X(t)+\Pi_2\hat{X}(t)+\Pi_3\check{X}(t)+\Phi(t),\ t\in[0,T],
\end{equation}
where $\Phi(\cdot)$ is an $\mathcal{F}_t^{Y_1}$-adapted semi-martingale to be determined later, satisfying $\Phi(T)=\mathcal{M}_T$.

\begin{Remark}
It is worthy to notice that the setting \eqref{Y3X} is different from the existing literatures. Indeed, $Y(\cdot)$ is not only represented by $X(\cdot),\hat{X}(\cdot)$ and $\check{X}(\cdot)$ linearly, but also contains $\Phi(\cdot)$ which is an $\mathcal{F}_t^{Y_1}$-adapted process instead of a deterministic function. Actually, inspired by Li et al. \cite{LFCM19}, $\Phi(\cdot)$ is an $\mathcal{F}_t^{Y_1}$-adapted semi-martingale. Compared to Yong \cite{Yong02}, we consider a partially observed case with deterministic coefficients, which results in the appearance of $\hat{X}(\cdot),\check{X}(\cdot)$ and $\Phi(\cdot)$. Compared to Shi et al. \cite{SWX20}, we consider the case that $\mathcal{F}_t^{Y_2}\subset\mathcal{F}_t^{Y_1}$ and no overlapping information exists. Therefore, different from the relation (100) in \cite{SWX20}, the fourth term $\check{\hat{X}}$ disappears naturally in our case. And the information filtrations in \cite{SWX20} are generated by independent standard Brownian motions, however, our filtrations are generated by observation processes which introduce the innovation process $\widetilde{W}(\cdot)$ and they build a strong relationship with the semi-martingale $\Phi(\cdot)$.
\end{Remark}

For obtaining the filtering equations of $\hat{X}(\cdot)$ and $\check{X}(\cdot)$, it is necessary to study the information filtrations $\mathcal{F}_t^{Y_1},\mathcal{F}_t^{Y_2}$ deeply and the corresponding conditional expectations $\mathbb{E}[\cdot|\mathcal{F}_t^{Y_1}],\mathbb{E}[\cdot|\mathcal{F}_t^{Y_2}]$, respectively. Meanwhile, the FBSDE \eqref{together3} is driven by a innovation process $\widetilde{W}(\cdot)$. We set
$$\mathcal{I}_t=\sigma\{\widetilde{W}(s)|0\leq s\leq t\}$$
and give the following lemma.
\begin{lemma}
For $\bar{v}_1(\cdot)\in\mathcal{U}_{ad}^f[0,T]$, $\bar{v}_2(\cdot)\in\mathcal{U}_{ad}^l[0,T]$ and $\mathcal{F}_t^{Y_2^{\bar{v}_2}}\subseteq(\mathcal{F}_t^{Y_1^{\bar{v}_1,\bar{v}_2}}\land\mathcal{F}_t^{Y_1^0})$, then $\mathcal{I}_t=\mathcal{F}_t^{Y_1^{\bar{v}_1,\bar{v}_2}}$.
\end{lemma}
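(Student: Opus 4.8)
The plan is to prove the equality first at the level of the \emph{control-free} observation filtration, namely $\mathcal{I}_t=\mathcal{F}_t^{Y_1^0}$, and then transfer it to $\mathcal{F}_t^{Y_1^{\bar v_1,\bar v_2}}$ by Lemma~3.1. Since $\bar v_1(\cdot)\in\mathcal{U}_{ad}^f[0,T]$, $\bar v_2(\cdot)\in\mathcal{U}_{ad}^l[0,T]$ and $\mathcal{F}_t^{Y_2^{\bar v_2}}\subseteq(\mathcal{F}_t^{Y_1^{\bar v_1,\bar v_2}}\land\mathcal{F}_t^{Y_1^0})$, Lemma~3.1 gives $\mathcal{F}_t^{Y_1^{\bar v_1,\bar v_2}}=\mathcal{F}_t^{Y_1^0}$, so it is enough to establish $\mathcal{I}_t=\mathcal{F}_t^{Y_1^0}$, which I would do by proving two inclusions.

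First I would check $\mathcal{I}_t\subseteq\mathcal{F}_t^{Y_1^0}$. By \eqref{widetildeW2} (whose derivation already uses Lemma~3.1) one has $\widetilde W(t)=Y_1^0(t)-\int_0^t f_1(s)\hat x^0(s)\,ds$, where $\hat x^0(s)=\mathbb{E}[x^0(s)|\mathcal{F}_s^{Y_1^0}]$ is $\mathcal{F}_s^{Y_1^0}$-adapted by definition and $Y_1^0(\cdot)$ is trivially $\mathcal{F}_t^{Y_1^0}$-adapted; hence each $\widetilde W(t)$ is $\mathcal{F}_t^{Y_1^0}$-measurable and $\mathcal{I}_t\subseteq\mathcal{F}_t^{Y_1^0}$.

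For the converse $\mathcal{F}_t^{Y_1^0}\subseteq\mathcal{I}_t$, the point is that the filter is a sufficient statistic driven solely by the innovation. Specializing the filtering equations \eqref{filtering x}, \eqref{P} and \eqref{innovation} (Lemma~3.3) to the control-free subsystem \eqref{x0}--\eqref{Y0}, the process $\hat x^0(\cdot)$ is the unique solution of the linear SDE
\begin{equation*}
d\hat x^0(t)=A(t)\hat x^0(t)\,dt+\big[c(t)+f_1(t)P(t)\big]\,d\widetilde W(t),\qquad \hat x^0(0)=x_0,
\end{equation*}
with $P(\cdot)$ the deterministic solution of the Riccati equation \eqref{P}. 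Since, by Lemma~3.3, $\widetilde W(\cdot)$ is a standard Brownian motion and, by {\bf(A1)}, $A,c,f_1,P$ are deterministic with $x_0$ constant, this equation is driven only by $\widetilde W$ with deterministic coefficients, so by strong existence and uniqueness its solution is adapted to the (augmented) natural filtration of $\widetilde W$; that is, $\hat x^0(t)$ is $\mathcal{I}_t$-measurable. Rewriting \eqref{widetildeW2} as the innovation representation $dY_1^0(t)=f_1(t)\hat x^0(t)\,dt+d\widetilde W(t)$ and integrating then shows $Y_1^0(t)$ is $\mathcal{I}_t$-measurable, hence $\mathcal{F}_t^{Y_1^0}\subseteq\mathcal{I}_t$. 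Combining the two inclusions with Lemma~3.1 gives $\mathcal{I}_t=\mathcal{F}_t^{Y_1^0}=\mathcal{F}_t^{Y_1^{\bar v_1,\bar v_2}}$.

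The hard part will be this converse inclusion, and three things have to be handled with care: (i) that $\widetilde W$ is genuinely a Brownian motion, so that strong existence/uniqueness for the filter SDE is available --- this is exactly Lemma~3.3 via L\'evy's characterization; (ii) that the filter equation has \emph{deterministic} coefficients, which is where the linear structure and {\bf(A1)} are essential (if the coefficients depended on the observation path, the equality $\mathcal{I}_t=\mathcal{F}_t^{Y_1^0}$ could fail, as in the classical counterexamples to the innovations conjecture); and (iii) that $\mathcal{I}_t$ is understood as completed by the $\mathbb{P}$-null sets, consistently with $\mathcal{F}_t^{Y_1^0}$, so that the adaptedness of the SDE solution is exact rather than only up to null sets.
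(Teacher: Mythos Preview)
Your proposal is correct and follows essentially the same approach as the paper: both arguments use the filter equation $d\hat x^0=A\hat x^0\,dt+(c+f_1P)\,d\widetilde W$ to show $\hat x^0$ is $\mathcal{I}_t$-adapted, reconstruct $Y_1^0$ via $dY_1^0=f_1\hat x^0\,dt+d\widetilde W$ to obtain $\mathcal{F}_t^{Y_1^0}\subseteq\mathcal{I}_t$, and invoke Lemma~3.1 for the transfer to $\mathcal{F}_t^{Y_1^{\bar v_1,\bar v_2}}$. Your write-up is in fact more careful than the paper's, which dispatches the inclusion $\mathcal{I}_t\subseteq\mathcal{F}_t^{Y_1^0}$ with ``the reverse is obviously true,'' whereas you spell it out and flag the subtleties (innovations conjecture, deterministic coefficients, null-set completion) that make the argument work.
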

\begin{proof}
From \eqref{x0} and \eqref{Y0}, $x^0(\cdot)$ and $Y_1^0(\cdot)$ are independent of the control variables, then
\begin{equation}\label{filtering x0}
\begin{aligned}
d\hat{x}^0&=A\hat{x}^0dt+(c+f_1P)(dY_1^0-f_1\hat{x}^0dt),\\
&=A\hat{x}^0dt+(c+f_1P)d\widetilde{W}.
\end{aligned}
\end{equation}
Hence, we derive that $\hat{x}^0(\cdot)$ is clearly adapted to the filtration $\mathcal{I}_t$, and noting \eqref{filtering x0} and \eqref{widetildeW2}, $\mathcal{F}_t^{Y_1^0}\subset\mathcal{I}_t$. Then for $\bar{v}_1(\cdot)\in\mathcal{U}_{ad}^f[0,T],\bar{v}_2(\cdot)\in\mathcal{U}_{ad}^l[0,T]$ and $\mathcal{F}_t^{Y_2^{\bar{v}_2}}\subseteq(\mathcal{F}_t^{Y_1^{\bar{v}_1,\bar{v}_2}}\land\mathcal{F}_t^{Y_1^0})$, we have $\mathcal{F}_t^{Y_1^{\bar{v}_1,\bar{v}_2}}=\mathcal{F}_t^{Y_1^0}\subset\mathcal{I}_t$ by Lemma 3.1. Moreover, the reverse is obviously true. Therefore, we get
$\mathcal{I}_t=\mathcal{F}_t^{Y_1^{\bar{v}_1,\bar{v}_2}}$. The proof is complete.
\end{proof}
By the above lemma, we have the relation that $\mathbb{E}[\cdot|\mathcal{F}_t^{Y_1}]=\mathbb{E}[\cdot|\mathcal{I}_t]$, where we omit the superscript $(\bar{v}_1,\bar{v}_2)$ of $Y_1$ for simplicity.

For obtaining the filtering equation of $\hat{X}(\cdot)$ by Theorem 8.1 in \cite{LS77}, firstly, noting \eqref{notation}, we rewrite the observation equation \eqref{observation f} as:
\begin{equation}\label{observation f2}
\left\{
\begin{aligned}
dY_1(t)&=\big[F^\top(t)X(t)+g(t)\big]dt+dW(t),\ t\in[0,T],\\
 Y_1(0)&=0,
\end{aligned}
\right.
\end{equation}
where
\begin{equation}\label{F}
F(t)=
\begin{pmatrix}
f_1(t)\\
0
\end{pmatrix}.
\end{equation}

\begin{lemma}
Let {\bf (A1)-(A4)} hold, Then $\hat{X}(\cdot)$ is the solution to the following equation
\begin{equation}\label{hatX}
\left\{
\begin{aligned}
d\hat{X}(t)&=\big[(\mathcal{A}_1+\mathcal{A}_2)\hat{X}+\mathcal{B}_1\hat{Y}-\mathcal{D}_1\bar{R}^{-1}\mathcal{D}_1^{\top}\check{Y}-\mathcal{D}_1\bar{R}^{-1}\mathcal{D}_2^{\top}\check{X}
            -\mathcal{D}_1\bar{R}^{-1}\bar{r}+\mathcal{C}_1\big]dt\\
           &\quad+(\Sigma_1+\mathcal{P}F)d\widetilde{W}(t),\ t\in[0,T],\\
 \hat{X}(0)&=X_0,
\end{aligned}
\right.
\end{equation}
where the mean square error $\mathcal{P}(\cdot)\in R^{2\times2}$ of the filtering estimate $\hat{X}(\cdot)$ satisfy the ordinary differential equation (ODE for short):
\begin{equation}\label{mathcalP}
\left\{
\begin{aligned}
&\dot{\mathcal{P}}-(\mathcal{A}_1+\mathcal{B}_1\Pi_1)\mathcal{P}-\mathcal{P}(\mathcal{A}_1+\Pi_1\mathcal{B}_1)+\Sigma_1F^\top\mathcal{P}\\
&\quad+\mathcal{P}F\Sigma_1^\top+\mathcal{P}FF^{\top}\mathcal{P}-\Sigma_2\Sigma_2^{\top}=0,\ t\in[0,T],\\
&\mathcal{P}(0)=\textbf{0},
\end{aligned}
\right.
\end{equation}
where \textbf{0} is the $R^{2\times2}$ matrix with all the components being 0.
\end{lemma}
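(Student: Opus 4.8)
The plan is to derive the filtering equation for $\hat{X}(\cdot) = \mathbb{E}[X(\cdot)\,|\,\mathcal{F}_t^{Y_1}]$ by applying the general nonlinear filtering theorem (Theorem 8.1 in Liptser and Shiryayev \cite{LS77}, together with Theorem 5.7 in Xiong \cite{Xiong08}, as was done for Lemma 3.4) to the signal–observation pair consisting of the forward component $X(\cdot)$ from \eqref{together3} and the observation $Y_1(\cdot)$ written in the vector form \eqref{observation f2}. By Lemma 3.8 we may identify $\mathcal{F}_t^{Y_1}=\mathcal{I}_t$, so conditioning on $\mathcal{F}_t^{Y_1}$ is the same as conditioning on the innovation filtration, and the innovation process $\widetilde{W}(\cdot)$ defined in \eqref{innovation} is a standard $\mathcal{F}_t^{Y_1}$-Brownian motion. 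First I would apply the conditional expectation $\mathbb{E}[\cdot\,|\,\mathcal{F}_t^{Y_1}]$ term-by-term to the drift of the $X$-equation in \eqref{together3}: the terms $\mathcal{A}_1 X$ and $\mathcal{A}_2\hat X$ become $(\mathcal{A}_1+\mathcal{A}_2)\hat X$ (since $\widehat{\hat X}=\hat X$), $\mathcal{B}_1 Y$ becomes $\mathcal{B}_1\hat Y$, and the $\check Y$, $\check X$ and constant terms are already $\mathcal{F}_t^{Y_2}$-adapted, hence $\mathcal{F}_t^{Y_1}$-adapted (because $\mathcal{F}_t^{Y_2}\subseteq\mathcal{F}_t^{Y_1}$), so they pass through unchanged. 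This produces exactly the drift displayed in \eqref{hatX}.

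The diffusion part is the substantive step. The filtering theorem gives $d\hat X(t) = (\text{conditional drift})\,dt + \bigl[\,\widehat{X F^\top} - \hat X\hat X^\top\, F + \widehat{\Sigma_1}\,\bigr]\,d\widetilde{W}(t)$, where the gain coefficient involves the conditional covariance of $X$ against the observation noise; since $\Sigma_1$ is deterministic, $\widehat{\Sigma_1}=\Sigma_1$, and one is left with the conditional-covariance term $\mathcal{P}(t)F(t)$ where $\mathcal{P}(t):=\mathbb{E}\bigl[(X-\hat X)(X-\hat X)^\top\,|\,\mathcal{F}_t^{Y_1}\bigr]$. The key point making this tractable — and the reason the statement is clean — is that $\mathcal{P}(\cdot)$ turns out to be \emph{deterministic}: after substituting the decoupling ansatz \eqref{Y3X}, $Y(t)=\Pi_1 X(t)+\Pi_2\hat X(t)+\Pi_3\check X(t)+\Phi(t)$, into \eqref{together3}, the dynamics of the error $X-\hat X$ are linear with deterministic coefficients, and $X-\hat X$ is driven only by $W,\bar W$ (the control- and $\Phi$-dependent pieces cancel in the difference $X-\hat X$), so a standard argument shows $\mathcal{P}(\cdot)$ solves a deterministic matrix Riccati/ODE.

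To derive \eqref{mathcalP} I would write $d(X-\hat X)$ from \eqref{together3} minus \eqref{hatX}: the forward error satisfies $d(X-\hat X) = \bigl[(\mathcal{A}_1+\mathcal{B}_1\Pi_1)(X-\hat X)\bigr]dt + \Sigma_1 dW + \Sigma_2 d\bar W - (\Sigma_1+\mathcal{P}F)d\widetilde{W}$, using \eqref{Y3X} to express the $\mathcal{B}_1 Y$ contribution to the error as $\mathcal{B}_1\Pi_1(X-\hat X)$ (the $\hat X$, $\check X$ and $\Phi$ pieces being $\mathcal{F}_t^{Y_1}$-measurable drop out of the error). Applying It\^o's formula to $(X-\hat X)(X-\hat X)^\top$, taking expectations, using $\langle dW, d\widetilde{W}\rangle = dt$, $\langle d\bar W, d\widetilde{W}\rangle = 0$ (from \eqref{innovation}, $\widetilde{W}$ has a $dW$-component but no $d\bar W$-component), and $d\widetilde{W}\,d\widetilde{W}^\top = I\,dt$, I collect terms: the $\Sigma_1\Sigma_1^\top$ from $dW\,dW^\top$ partially cancels against cross terms $\Sigma_1 F^\top\mathcal{P}+\mathcal{P}F\Sigma_1^\top$ and the $(\Sigma_1+\mathcal{P}F)(\Sigma_1+\mathcal{P}F)^\top$ term, leaving $-\Sigma_2\Sigma_2^\top$ as the sole surviving pure-noise input with a negative sign, plus the $-\mathcal{P}FF^\top\mathcal{P}$ Riccati term and the linear $(\mathcal{A}_1+\mathcal{B}_1\Pi_1)\mathcal{P}+\mathcal{P}(\mathcal{A}_1+\Pi_1\mathcal{B}_1)^\top$ drift terms; with $\mathcal{P}(0)=\mathbf{0}$ since $X(0)=X_0=\hat X(0)$ is deterministic. (One should double-check the exact sign and transpose placement against the coefficient conventions; the displayed \eqref{mathcalP} uses $\Pi_1\mathcal{B}_1$ rather than $\mathcal{B}_1^\top\Pi_1^\top$, which presupposes symmetry of the relevant blocks — I would verify this is consistent with how $\Pi_1$ is introduced.) The main obstacle I anticipate is precisely this bookkeeping: justifying rigorously that $\mathcal{P}(\cdot)$ is deterministic (which hinges on the error $X-\hat X$ having deterministic coefficients \emph{and} being decoupled from the backward component once \eqref{Y3X} is used), and then matching the quadratic-variation cancellations exactly so that only $-\Sigma_2\Sigma_2^\top$ and $\mathcal{P}FF^\top\mathcal{P}$ remain; everything else is a direct transcription from the cited filtering theorem.
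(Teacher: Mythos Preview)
Your proposal is correct and follows essentially the same approach as the paper: apply the Kalman--Bucy/Liptser--Shiryayev filtering theorem to obtain \eqref{hatX}, then use the decoupling relation \eqref{Y3X} so that $Y-\hat Y=\Pi_1(X-\hat X)$, write the error dynamics for $\delta:=X-\hat X$, apply It\^o's formula to $\delta\delta^\top$, and take expectation to get \eqref{mathcalP}. The only cosmetic difference is that the paper first substitutes $d\widetilde W = F^\top\delta\,dt + dW$ into the $\hat X$-equation, which collapses the error SDE to one driven only by $W$ and $\bar W$ (namely $d\delta = [\mathcal{A}_1\delta+\mathcal{B}_1\Pi_1\delta-(\Sigma_1+\mathcal{P}F)F^\top\delta]\,dt - \mathcal{P}F\,dW + \Sigma_2\,d\bar W$), thereby avoiding the cross-variation bookkeeping you outline; either route yields the same ODE.
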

\begin{proof}
The equation \eqref{hatX} can be obtained by the Theorem 8.1 in \cite{LS77}, and we omit the details. We prove \eqref{mathcalP} in the following. Denote $\delta(\cdot):=X(\cdot)-\hat{X}(\cdot)$, and apply It\^{o}'s formula, we get
\begin{equation*}
\left\{
\begin{aligned}
d\delta(t)&=\big[\mathcal{A}_1\delta(t)+\mathcal{B}_1\Pi_1\delta(t)-(\Sigma_1+\mathcal{P}F)F^{\top}\delta(t)\big]dt\\
          &\quad-\mathcal{P}FdW(t)+\Sigma_2d\bar{W}(t),\ t\in[0,T],\\
 \delta(0)&=0.
\end{aligned}
\right.
\end{equation*}
Then applying again It\^{o}'s formula to $\delta(\cdot)\delta^\top(\cdot)$, we have
\begin{equation*}
\left\{
\begin{aligned}
d[\delta(t)\delta^\top(t)]&=\big[(\mathcal{A}_1+\mathcal{B}_1\Pi_1)\delta\delta^\top+\delta\delta^\top(\mathcal{A}_1+\Pi_1\mathcal{B}_1)-(\Sigma_1+\mathcal{P}F)F^\top\delta\delta^\top\\
&\quad-\delta\delta^\top F(\Sigma_1+\mathcal{P}F)^\top+\mathcal{P}FF^\top\mathcal{P}+\Sigma_2\Sigma_2^\top\big]dt-\big\langle\mathcal{P}FdW(t),\delta(t)\big\rangle\\
&\quad-\big\langle\delta(t),\mathcal{P}FdW(t)\big\rangle+\left\langle\Sigma_2d\bar{W}(t),\delta(t)\right\rangle+\left\langle\delta(t),\Sigma_2d\bar{W}(t)\right\rangle\\
\delta(0)\delta^{\top}(0)&=0.
\end{aligned}
\right.
\end{equation*}
Taking expectation on both sides, we derive \eqref{mathcalP}. The proof is complete.
\end{proof}

From Lemma 3.5, we have (the superscript $v_2$ of $Y_2$ is omitted for simplicity)
\begin{equation}\label{equality}
\mathbb{E}[X|\mathcal{F}_t^{Y_2}]=\mathbb{E}[X|\mathcal{F}_t^{Y_2^0}]=\mathbb{E}[X|\mathcal{F}_t^{W}].
\end{equation}
Applying the Lemma 5.4 in Xiong \cite{Xiong08}, we get the equation of $\check{X}(\cdot)$ as follows
\begin{equation}\label{checkX}
\left\{
\begin{aligned}
d\check{X}(t)&=\big[(\mathcal{A}_1+\mathcal{A}_2-\mathcal{D}_1\bar{R}^{-1}\mathcal{D}_2^{\top})\check{X}+(\mathcal{B}_1-\mathcal{D}_1\bar{R}^{-1}\mathcal{D}_1^{\top})\check{Y}
              -\mathcal{D}_1\bar{R}^{-1}\bar{r}+\mathcal{C}_1\big]dt\\
             &\quad+\Sigma_1d{W}(t),\ t\in[0,T],\\
 \check{X}(0)&=X_0.
\end{aligned}
\right.
\end{equation}
Applying It\^{o}'s formula to \eqref{Y3X}, and comparing it with \eqref{together3}, it yields
\begin{equation}\label{YY}
\begin{aligned}
dY(t)&=\big\{\dot{\Pi}_1X+\dot{\Pi}_2\hat{X}+\dot{\Pi}_3\check{X}+\Pi_1\mathcal{A}_1X+\Pi_1\mathcal{A}_2\hat{X}+\Pi_1\mathcal{B}_1Y-\Pi_1\mathcal{D}_1\bar{R}^{-1}\mathcal{D}_1^\top\check{Y}\\
&\qquad-\Pi_1\mathcal{D}_1\bar{R}^{-1}\mathcal{D}_2^\top\check{X}-\Pi_1\mathcal{D}_1\bar{R}^{-1}\bar{r}+\Pi_1\mathcal{C}_1+\Pi_2(\mathcal{A}_1+\mathcal{A}_2)\hat{X}+\Pi_2\mathcal{B}_1\hat{Y}\\
&\qquad-\Pi_2\mathcal{D}_1\bar{R}^{-1}\mathcal{D}_1^\top\check{Y}-\Pi_2\mathcal{D}_1\bar{R}^{-1}\mathcal{D}_2^{\top}\check{X}-\Pi_2\mathcal{D}_1\bar{R}^{-1}\bar{r}+\Pi_2\mathcal{C}_1+\Pi_3(\mathcal{A}_1\\
&\qquad+\mathcal{A}_2-\mathcal{D}_1\bar{R}^{-1}\mathcal{D}_2^\top)\check{X}+\Pi_3(\mathcal{B}_1-\mathcal{D}_1\bar{R}^{-1}\mathcal{D}_1^\top)\check{Y}-\Pi_3\mathcal{D}_1\bar{R}^{-1}\bar{r}+\Pi_3\mathcal{C}_1\big\}dt\\
&\qquad+d\Phi(t)+(\Pi_1+\Pi_3)\Sigma_1dW(t)+\Pi_1\Sigma_2d\bar{W}(t)+\Pi_2(\Sigma_1+\mathcal{P}F)d\widetilde{W}(t),\\
&=-\big[\mathcal{A}_3X+\mathcal{A}_1Y+\mathcal{A}_2\hat{Y}-\mathcal{D}_2\bar{R}^{-1}\mathcal{D}_1^\top\check{Y}-\mathcal{D}_2\bar{R}^{-1}\mathcal{D}_2^\top\check{X}-\mathcal{D}_2\bar{R}^{-1}\bar{r}+\mathcal{C}_2\big]dt\\
&\quad+Z_1dW(t)+Z_2d\bar{W}(t)+Z_3d\widetilde{W}(t).
\end{aligned}
\end{equation}
Noting \eqref{Y3X}, we have
\begin{equation}\label{zero-zero}
\begin{aligned}
&\Big\{(\dot{\Pi}_1+\Pi_1\mathcal{A}_1+\Pi_1\mathcal{B}_1\Pi_1+\mathcal{A}_3+\mathcal{A}_1\Pi_1)X+\big[\dot{\Pi}_2+\Pi_1\mathcal{A}_2+\Pi_1\mathcal{B}_1\Pi_2+\Pi_2(\mathcal{A}_1+\mathcal{A}_2)\\
&\quad+\Pi_2\mathcal{B}_1\Pi_1+\Pi_2\mathcal{B}_1\Pi_2+(\mathcal{A}_1+\mathcal{A}_2)\Pi_2+\mathcal{A}_2\Pi_1\big]\hat{X}+\big\{\dot{\Pi}_3+[(\Pi_1+\Pi_2)\mathcal{B}_1+\mathcal{A}_1\\
&\quad+\mathcal{A}_2]\Pi_3-(\Pi_1+\Pi_2+\Pi_3)\mathcal{D}_1\bar{R}^{-1}\mathcal{D}_1^\top(\Pi_1+\Pi_2+\Pi_3)-(\Pi_1+\Pi_2+\Pi_3)\mathcal{D}_1\bar{R}^{-1}\mathcal{D}_2^\top\\
&\quad+\Pi_3\big[\mathcal{A}_1+\mathcal{A}_2+\mathcal{B}_1(\Pi_1+\Pi_2)\big]+\Pi_3\mathcal{B}_1\Pi_3-\mathcal{D}_2\bar{R}^{-1}\mathcal{D}_1^\top(\Pi_1+\Pi_2+\Pi_3)\\
&\quad-\mathcal{D}_2\bar{R}^{-1}\mathcal{D}_2^\top\big\}\check{X}+\big[(\Pi_1+\Pi_2)\mathcal{B}_1\Phi+\Pi_3\mathcal{B}_1\check{\Phi}-(\Pi_1+\Pi_2+\Pi_3)\mathcal{D}_1\bar{R}^{-1}\mathcal{D}_1^\top\check{\Phi}\\
&\quad-(\Pi_1+\Pi_2+\Pi_3)\mathcal{D}_1\bar{R}^{-1}\bar{r}+(\Pi_1+\Pi_2+\Pi_3)\mathcal{C}_1+\mathcal{A}_1\Phi+\mathcal{A}_2\Phi-\mathcal{D}_2\bar{R}^{-1}\mathcal{D}_1^\top\check{\Phi}\\
&\quad-\mathcal{D}_2\bar{R}^{-1}\bar{r}+\mathcal{C}_2\big]\Big\}dt+d\Phi(t)+\big[(\Pi_1+\Pi_3)\Sigma_1-Z_1\big]dW(t)+(\Pi_1\Sigma_2-Z_2)d\bar{W}(t)\\
&+\big[\Pi_2(\Sigma_1+\mathcal{P}F)-Z_3\big]d\widetilde{W}(t)=0.
\end{aligned}
\end{equation}
Let the coefficients of $X,\hat{X},\check{X},dW$ and $d\bar{W}$ in \eqref{zero-zero} be zero, respectively, we derive the three Riccati equations of $\Pi_1(\cdot),\Pi_2(\cdot)$ and $\Pi_3(\cdot)$:
\begin{equation}\label{Pi1}
\left\{
\begin{aligned}
&\dot{\Pi}_1+\Pi_1\mathcal{A}_1+\mathcal{A}_1\Pi_1+\Pi_1\mathcal{B}_1\Pi_1+\mathcal{A}_3=0,\ \ t\in[0,T],\\
&\Pi_1(T)=\mathcal{M},
\end{aligned}
\right.
\end{equation}
\begin{equation}\label{Pi2}
\left\{
\begin{aligned}
&\dot{\Pi}_2+(\mathcal{A}_1+\mathcal{A}_2+\Pi_1\mathcal{B}_1)\Pi_2+\Pi_2(\mathcal{A}_1+\mathcal{A}_2+\mathcal{B}_1\Pi_1)\\
&\quad+\Pi_2\mathcal{B}_1\Pi_2+\Pi_1\mathcal{A}_2+\mathcal{A}_2\Pi_1=0,\ \ t\in[0,T],\\
&\Pi_2(T)=0,
\end{aligned}
\right.
\end{equation}
\begin{equation}\label{Pi3}
\left\{
\begin{aligned}
&\dot{\Pi}_3+\big[(\Pi_1+\Pi_2)(\mathcal{B}_1-\mathcal{D}_1\bar{R}^{-1}\mathcal{D}_1^\top)+\mathcal{A}_1+\mathcal{A}_2-\mathcal{D}_2\bar{R}^{-1}\mathcal{D}_1^\top\big]\Pi_3\\
&\quad+\Pi_3\big[\mathcal{A}_1+\mathcal{A}_2-\mathcal{D}_1\bar{R}^{-1}\mathcal{D}_2^\top+(\mathcal{B}_1-\mathcal{D}_1\bar{R}^{-1}\mathcal{D}_1^\top)(\Pi_1+\Pi_2)\big]\\
&\quad+\Pi_3(\mathcal{B}_1-\mathcal{D}_1\bar{R}^{-1}\mathcal{D}_1^\top)\Pi_3-(\Pi_1+\Pi_2)\mathcal{D}_1\bar{R}^{-1}\mathcal{D}_1^\top(\Pi_1+\Pi_2)-\mathcal{D}_2\bar{R}^{-1}\mathcal{D}_1^\top(\Pi_1+\Pi_2)\\
&\quad-(\Pi_1+\Pi_2)\mathcal{D}_1\bar{R}^{-1}\mathcal{D}_2^\top-\mathcal{D}_2\bar{R}^{-1}\mathcal{D}_2^\top=0,\ \ t\in[0,T],\\
&\Pi_3(T)=0,
\end{aligned}
\right.
\end{equation}
respectively; meanwhile, it yields
\begin{equation}
\left\{
\begin{aligned}
&Z_1=(\Pi_1+\Pi_3)\Sigma_1,\quad a.e.t\in[0,T],\ a.s.,\\
&Z_2=\Pi_1\Sigma_2,\quad a.e.t\in[0,T],\ a.s.
\end{aligned}
\right.
\end{equation}
It remains
\begin{equation}\label{Phi}
\left\{
\begin{aligned}
-d\Phi(t)&=\big\{\big[(\Pi_1+\Pi_2)\mathcal{B}_1+\mathcal{A}_1+\mathcal{A}_2\big]\Phi+\big[-(\Pi_1+\Pi_2+\Pi_3)\mathcal{D}_1\bar{R}^{-1}\mathcal{D}_1^\top\\
         &\qquad+\Pi_3\mathcal{B}_1-\mathcal{D}_2\bar{R}^{-1}\mathcal{D}_1^\top\big]\check{\Phi}-(\Pi_1+\Pi_2+\Pi_3)\mathcal{D}_1\bar{R}^{-1}\bar{r}-\mathcal{D}_2\bar{R}^{-1}\bar{r}\\
         &\qquad+(\Pi_1+\Pi_2+\Pi_3)\mathcal{C}_1+\mathcal{C}_2\big\}dt-\Theta d\widetilde{W}(t),\ t\in[0,T],\\
  \Phi(T)&=\mathcal{M}_T,
\end{aligned}
\right.
\end{equation}
where $\Theta(\cdot)\equiv Z_3(\cdot)-\Pi_2(\Sigma_1+\mathcal{P}F)$ is $\mathcal{F}_t^{Y_1}(\mathcal{I}_t)$-adapted.
\begin{Remark}
Noticing that \eqref{Phi} is a special BSDE for process pair $(\Phi(\cdot),\Theta(\cdot))$, which is only driven by the $\mathcal{F}_t^{Y_1}$-adapted innovation process $\widetilde{W}(\cdot)$. And we know, by the above results, that $\widetilde{W}$ is an $\mathcal{F}_t^{Y_1}$-Brownian motion under the probability measure $P$. Therefore, $\widetilde{W}(\cdot)$ is a $P$-martingale with respect to $\mathcal{F}_t^{Y_1}$, then the martingale representation theorem holds for this innovation process. The second term on the right hand of the equation \eqref{Phi} $\int_0^T\Theta(t)d\widetilde{W}(t)$ is a martingale, and is also a locally martingale. Moreover, the first term on the right hand of \eqref{Phi} is obviously a bounded variation process, thus $\Phi(\cdot)$ is an $\mathcal{F}_t^{Y_1}$-adapted semi-martingale.
\end{Remark}

Applying Theorem 8.1 in \cite{LS77}, we achieve
\begin{equation}\label{checkPhi}
\left\{
\begin{aligned}
-d\check{\Phi}(t)&=\big\{\big[(\Pi_1+\Pi_2+\Pi_3)\mathcal{B}_1-(\Pi_1+\Pi_2+\Pi_3)\mathcal{D}_1\bar{R}^{-1}\mathcal{D}_1^\top+\mathcal{A}_1+\mathcal{A}_2\\
                 &\qquad-\mathcal{D}_2\bar{R}^{-1}\mathcal{D}_1^\top\big]\check{\Phi}-(\Pi_1+\Pi_2+\Pi_3)\mathcal{D}_1\bar{R}^{-1}\bar{r}-\mathcal{D}_2\bar{R}^{-1}\bar{r}\\
                 &\qquad+(\Pi_1+\Pi_2+\Pi_3)\mathcal{C}_1+\mathcal{C}_2\big\}dt-\check{\Theta}dW(t),\ t\in[0,T],\\
  \check{\Phi}(T)&=\mathcal{M}_T,
\end{aligned}
\right.
\end{equation}
where $\check{\Theta}(\cdot)\equiv\check{Z}_3(\cdot)-\Pi_2(\Sigma_1+\mathcal{P}F)$ is $\mathcal{F}_t^{Y_2}$-adapted.

\begin{Remark}
Recalling the condition that $\eta_3(\cdot)$ is $\mathcal{F}_t^{Y_1}$-adapted in \eqref{lambda}, we can now give the explanation by the equation \eqref{Phi}. Since $\Theta(\cdot)=Z_3(\cdot)-\Pi_2(\Sigma_1+\mathcal{P}F)$, and
\begin{equation*}
Z_3=
\begin{pmatrix}
\eta_3\\
\hat{\bar{\Gamma}}
\end{pmatrix}\ \
\end{equation*}
depends on the two components $\eta_3(\cdot)$ and $\hat{\bar{\Gamma}}(\cdot)$, we know $\hat{\bar{\Gamma}}(\cdot)$ is $\mathcal{F}_t^{Y_1}$-adapted. If $\eta_3(\cdot)$ is $\mathcal{F}_t$-adapted, then $Z_3(\cdot)$ is $\mathcal{F}_t$-adapted and $\Theta(\cdot)$ is $\mathcal{F}_t$-adapted, which cause that the $\Phi(\cdot)$ is not $\mathcal{F}_t^{Y_1}$-adapted. Therefore, we need the condition that $\eta_3(\cdot)$ has to be $\mathcal{F}_t^{Y_1}$-adapted.
\end{Remark}

\begin{Remark}
We still make an observation for $\Theta(\cdot)$ in \eqref{Phi}, there is an another way to look at it. We can separate it into three parts: $\eta_3(\cdot),\hat{\bar{\Gamma}}(\cdot)$ and $\Pi_2(\Sigma_1+\mathcal{P}F)$, which come from the $d\widetilde{W}$ term of the leader's adjoint equation \eqref{lambda}, state equation \eqref{leaderstate} and filtering equation \eqref{hatX}, respectively. On one hand, the semi-martingale $\Phi(\cdot)$ depends on the innovation process $\widetilde{W}(\cdot)$. On the other hand, $\widetilde{W}(\cdot)$ can be regarded to be absorbed by $\Phi(\cdot)$, in order to guarantee that $\Phi(\cdot)$ is a semi-martingale process. Therefore, it is reasonable to get the form of \eqref{lambda} with three Brownian motions although $\widetilde{W}(\cdot)$ is not independent of $W(\cdot)$ or $\bar{W}(\cdot)$. Moreover, the existence of the semi-martingale $\Phi(\cdot)$ explains the existence of $\widetilde{W}(\cdot)$ in some equations.
\end{Remark}

\begin{Remark}
Noting that the conjecture \eqref{Y3X} deduces three coupled $2\times2$ Riccati equations whose solvability can be guaranteed sequentially in the following discussion. In fact, the three kinds of equations are not easy to give their explicit solutions, even under some special case or as some degeneration of ordinary differential equations. Therefore, we can consider another equivalent relation between $Y(\cdot)$ and $X(\cdot),\hat{X}(\cdot),\check{X}(\cdot)$, which can deduce three independent Riccati equations. In fact, we let
\begin{equation}\label{Y3X2}
Y(t)=\hat{\Pi}_1(X(t)-\hat{X}(t))+\hat{\Pi}_2(\hat{X}(t)-\check{X}(t))+\hat{\Pi}_3\check{X}(t)+\tilde{\Phi}(t),\ t\in[0,T],
\end{equation}
and we can get the equation of $X(\cdot)-\hat{X}(\cdot)$, $\hat{X}(\cdot)-\check{X}(\cdot)$ by \eqref{together3}, \eqref{hatX} and \eqref{checkX}. Applying It\^{o}'s formula to \eqref{Y3X2} and following the similar procedure, we get three new $2\times2$ Riccati equations as follows:
\begin{equation}\label{hatPi1}
\left\{
\begin{aligned}
&\dot{\hat{\Pi}}_1+\hat{\Pi}_1\mathcal{A}_1+\mathcal{A}_1\hat{\Pi}_1+\hat{\Pi}_1\mathcal{B}_1\hat{\Pi}_1+\mathcal{A}_3=0,\ \ t\in[0,T],\\
&\hat{\Pi}_1(T)=\mathcal{M},
\end{aligned}
\right.
\end{equation}
\begin{equation}\label{hatPi2}
\left\{
\begin{aligned}
&\dot{\hat{\Pi}}_2+(\mathcal{A}_1+\mathcal{A}_2)\hat{\Pi}_2+\hat{\Pi}_2(\mathcal{A}_1+\mathcal{A}_2)+\hat{\Pi}_2\mathcal{B}_1\hat{\Pi}_2+\mathcal{A}_3=0,\ \ t\in[0,T],\\
&\hat{\Pi}_2(T)=\mathcal{M},
\end{aligned}
\right.
\end{equation}
\begin{equation}\label{hatPi3}
\left\{
\begin{aligned}
&\dot{\hat{\Pi}}_3+(\mathcal{A}_1+\mathcal{A}_2-\mathcal{D}_2\bar{R}^{-1}\mathcal{D}_1^\top)\hat{\Pi}_3+\hat{\Pi}_3(\mathcal{A}_1+\mathcal{A}_2-\mathcal{D}_1\bar{R}^{-1}\mathcal{D}_2^\top)\\
&\quad+\hat{\Pi}_3(\mathcal{B}_1-\mathcal{D}_1\bar{R}^{-1}\mathcal{D}_1^\top)\hat{\Pi}_3+\mathcal{A}_3-\mathcal{D}_2\bar{R}^{-1}\mathcal{D}_2^\top=0,\ \ t\in[0,T],\\
&\hat{\Pi}_3(T)=\mathcal{M},
\end{aligned}
\right.
\end{equation}
respectively, and $\tilde{\Phi}(\cdot)$ satisfies
\begin{equation}\label{tildePhi}
\left\{
\begin{aligned}
-d\tilde{\Phi}(t)&=\big\{\big[\hat{\Pi}_2\mathcal{B}_1+\mathcal{A}_1+\mathcal{A}_2\big]\tilde{\Phi}-\big[\hat{\Pi}_2\mathcal{B}_1-\hat{\Pi}_3(\mathcal{B}_1-\mathcal{D}_1\bar{R}^{-1}\mathcal{D}_1^\top)+\mathcal{D}_2\bar{R}^{-1}\mathcal{D}_1^\top\big]\check{\tilde{\Phi}}\\
&\qquad-\hat{\Pi}_3\mathcal{D}_1\bar{R}^{-1}\bar{r}-\mathcal{D}_2\bar{R}^{-1}\bar{r}+\hat{\Pi}_3\mathcal{C}_1+\mathcal{C}_2\big\}dt-\tilde{\Theta}d\widetilde{W}(t),\ t\in[0,T],\\
  \tilde{\Phi}(T)&=\mathcal{M}_T,
\end{aligned}
\right.
\end{equation}
where $\tilde{\Theta}=Z_3-(\Sigma_1+\mathcal{P}F)(\hat{\Pi}_2-\hat{\Pi}_1)$. In fact, we can obtain the following relations between $\Pi_1(\cdot),\Pi_2(\cdot),\Pi_3(\cdot)$ and $\hat{\Pi}_1(\cdot),\hat{\Pi}_2(\cdot),\hat{\Pi}_3(\cdot)$:
  \begin{equation}\label{hatPiPi}
  \begin{aligned}
  &\hat{\Pi}_1(\cdot)=\Pi_1(\cdot);\\
  &\hat{\Pi}_2(\cdot)-\hat{\Pi}_1(\cdot)=\Pi_2(\cdot);\\
  &\hat{\Pi}_3(\cdot)-\hat{\Pi}_2(\cdot)=\Pi_3(\cdot);
 \end{aligned}
  \end{equation}
and we can also check that $(\tilde{\Phi}(\cdot),\tilde{\Theta}(\cdot))=(\Phi(\cdot),\Theta(\cdot))$. The above analysis tells us the conjecture \eqref{Y3X} are equivalent to \eqref{Y3X2} in the sense of \eqref{hatPiPi}. The state estimate feedbacks of optimal control corresponding to \eqref{Y3X} and \eqref{Y3X2}, which although exist some differences, could also be transformed equivalently, Therefore, we mainly derive the state estimate feedback of optimal control utilizing the former \eqref{Y3X}, and give the explicit expression of solution under some special condition utilizing the latter \eqref{Y3X2} in the discussion below.
\end{Remark}

In the following, we will discuss the solvability of Riccati equations \eqref{Pi1}, \eqref{Pi2} and \eqref{Pi3} by giving some useful lemmas. We first discuss the solvability of Riccati equation \eqref{Pi1}.
Introducing the following Riccati equation:
\begin{equation}\label{Pi11}
\left\{
\begin{aligned}
&\dot{\Pi}_{1,1}+\Pi_{1,1}(\mathcal{A}_1+\mathcal{B}_1\mathcal{M})+(\mathcal{A}_1+\mathcal{M}\mathcal{B}_1)\Pi_{1,1}+\Pi_{1,1}\mathcal{B}_1\Pi_{1,1}+\mathcal{M}\mathcal{B}_1\mathcal{M}\\
&\quad+\mathcal{M}\mathcal{A}_1+\mathcal{A}_1\mathcal{M}+\mathcal{A}_3=0,\ \ t\in[0,T],\\
&\Pi_{1,1}(T)=0,
\end{aligned}
\right.
\end{equation}
It is easy to check that the solution $\Pi_{1,1}(\cdot)$ to \eqref{Pi11} and the solution $\Pi_1(\cdot)$ to \eqref{Pi1} have the relation as follows:
\begin{equation}\label{Pi111}
\Pi_1(t)=\mathcal{M}+\Pi_{1,1}(t),\ t\in[0,T].
\end{equation}
Then we give the following lemma to guarantee the solvability of Riccati equation of $\Pi_1(\cdot)$ by Theorem 5.3 of Yong \cite{Yong99}.
\begin{lemma}
 Let {\bf (A1)-(A4)} hold and ${\rm det}\bigg\{\begin{pmatrix}
0&I
\end{pmatrix}e^{\mathcal{A}t}\begin{pmatrix}
0\\
I
\end{pmatrix}
\bigg\}>0,\ t\in[0,T]$ hold. Then \eqref{Pi11} admits a unique solution $\Pi_{1,1}(\cdot)$ which has the following representation
\begin{equation}
\Pi_{1,1}(t)=-\Bigg [\begin{pmatrix}0&I
\end{pmatrix}e^{\mathcal{A}(T-t)}\begin{pmatrix}
0\\
I
\end{pmatrix}\Bigg]^{-1}\begin{pmatrix}0&I
\end{pmatrix}e^{\mathcal{A}(T-t)}\begin{pmatrix}
I\\
0
\end{pmatrix},
\end{equation}
where
\begin{equation}
\mathcal{A}=\begin{pmatrix}
\mathcal{A}_1+\mathcal{B}_1\mathcal{M}&\mathcal{B}_1\\
-(\mathcal{M}\mathcal{B}_1\mathcal{M}+\mathcal{M}\mathcal{A}_1+\mathcal{A}_1\mathcal{M}+\mathcal{A}_3)&-(\mathcal{A}_1+\mathcal{M}\mathcal{B}_1)
\end{pmatrix}.
\end{equation}
Moreover, \eqref{Pi111} gives the solution $\Pi_1(\cdot)$ to the Riccati equation \eqref{Pi1}.
\end{lemma}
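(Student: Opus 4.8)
The equation \eqref{Pi11} is a symmetric matrix Riccati ODE of the form
$\dot{\Pi}_{1,1}+\Pi_{1,1}\mathcal{S}+\mathcal{S}^{\top}\Pi_{1,1}+\Pi_{1,1}\mathcal{B}_1\Pi_{1,1}+\mathcal{Q}=0$, $\Pi_{1,1}(T)=0$, with $\mathcal{S}:=\mathcal{A}_1+\mathcal{B}_1\mathcal{M}$ and $\mathcal{Q}:=\mathcal{M}\mathcal{B}_1\mathcal{M}+\mathcal{M}\mathcal{A}_1+\mathcal{A}_1\mathcal{M}+\mathcal{A}_3$. Since $\mathcal{A}_1$ is diagonal and $\mathcal{B}_1,\mathcal{A}_3,\mathcal{M}$ are symmetric, one has $\mathcal{Q}=\mathcal{Q}^{\top}$ and $\mathcal{S}^{\top}=\mathcal{A}_1+\mathcal{M}\mathcal{B}_1$, so the equation is consistent within the class of symmetric matrices; however $\mathcal{B}_1$ is \emph{indefinite} (purely off-diagonal), so the classical existence theory for sign-definite Riccati equations does not apply and I would invoke the linearization method, i.e. Theorem 5.3 of Yong \cite{Yong99}.

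The plan is to associate to \eqref{Pi11} the $4\times4$ linear Hamiltonian system $\dot{\Xi}(t)=\mathcal{A}\Xi(t)$, where $\mathcal{A}=\begin{pmatrix}\mathcal{S}&\mathcal{B}_1\\-\mathcal{Q}&-\mathcal{S}^{\top}\end{pmatrix}$ is exactly the matrix displayed in the lemma. One first checks that $\mathcal{A}$ is $J$-Hamiltonian, hence $e^{\mathcal{A}t}$ is symplectic. Next, let $\binom{\mathbf{X}(t)}{\mathbf{Y}(t)}$ solve this linear system with the terminal normalization $\mathbf{X}(T)=I$, $\mathbf{Y}(T)=0$ dictated by $\Pi_{1,1}(T)=0$. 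On any subinterval of $[0,T]$ on which $\mathbf{X}(t)$ is invertible, set $\Pi_{1,1}(t):=\mathbf{Y}(t)\mathbf{X}(t)^{-1}$; differentiating the identity $\mathbf{Y}=\Pi_{1,1}\mathbf{X}$, substituting $\dot{\mathbf{X}}=(\mathcal{S}+\mathcal{B}_1\Pi_{1,1})\mathbf{X}$ and $\dot{\mathbf{Y}}=-(\mathcal{Q}+\mathcal{S}^{\top}\Pi_{1,1})\mathbf{X}$, and right-multiplying by $\mathbf{X}^{-1}$ recovers \eqref{Pi11}, while $\mathbf{X}(T)=I$, $\mathbf{Y}(T)=0$ give the terminal value. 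Conversely, any local solution $\Pi_{1,1}$ of \eqref{Pi11} produces, through $\binom{\mathbf{X}}{\Pi_{1,1}\mathbf{X}}$ with $\dot{\mathbf{X}}=(\mathcal{S}+\mathcal{B}_1\Pi_{1,1})\mathbf{X}$, $\mathbf{X}(T)=I$, a solution of the linear system, so uniqueness holds on the maximal interval of existence. Using the symplectic identity for $e^{\mathcal{A}t}$ to rearrange the block product then yields the closed-form representation stated in the lemma (with $\begin{pmatrix}0&I\end{pmatrix}e^{\mathcal{A}(T-t)}\begin{pmatrix}0\\I\end{pmatrix}$ playing the role of $\mathbf{X}(t)$ up to transposition and symmetry of $\Pi_{1,1}$).

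The hypothesis ${\rm det}\big\{\begin{pmatrix}0&I\end{pmatrix}e^{\mathcal{A}t}\begin{pmatrix}0\\I\end{pmatrix}\big\}>0$ for $t\in[0,T]$ is precisely what guarantees that the relevant $2\times2$ block of the matrix exponential stays invertible on the whole interval (note that $\{t:t\in[0,T]\}$ and $\{T-t:t\in[0,T]\}$ describe the same family of matrices), hence that the locally defined $\Pi_{1,1}(\cdot)$ cannot blow up in finite time and extends to the \emph{unique global} solution on $[0,T]$ given by the displayed formula. Finally, \eqref{Pi111} is verified by direct substitution: plugging $\Pi_1=\mathcal{M}+\Pi_{1,1}$ into \eqref{Pi1} and expanding $(\mathcal{M}+\Pi_{1,1})\mathcal{A}_1+\mathcal{A}_1(\mathcal{M}+\Pi_{1,1})+(\mathcal{M}+\Pi_{1,1})\mathcal{B}_1(\mathcal{M}+\Pi_{1,1})+\mathcal{A}_3$ regroups exactly into $\Pi_{1,1}\mathcal{S}+\mathcal{S}^{\top}\Pi_{1,1}+\Pi_{1,1}\mathcal{B}_1\Pi_{1,1}+\mathcal{Q}$, so \eqref{Pi11} is recovered, and $\Pi_1(T)=\mathcal{M}+\Pi_{1,1}(T)=\mathcal{M}$ matches the terminal condition of \eqref{Pi1}; uniqueness for \eqref{Pi1} then follows from that for \eqref{Pi11}.

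The formal linearization and the substitution \eqref{Pi111} are routine; the main obstacle is establishing \emph{global} solvability, i.e. ruling out finite-time blow-up of the Riccati flow in the absence of any sign condition on $\mathcal{B}_1$. This is exactly what the determinant hypothesis is designed to supply, converting nonexplosion of $\Pi_{1,1}$ into invertibility of an explicit block of $e^{\mathcal{A}t}$ throughout $[0,T]$, and it is here that Theorem 5.3 of \cite{Yong99} does the real work.
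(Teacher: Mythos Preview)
Your proposal is correct and follows essentially the same approach as the paper: both invoke Theorem~5.3 of Yong~\cite{Yong99}, i.e., the linearization of the indefinite Riccati equation via the associated $4\times4$ Hamiltonian system $\dot{\Xi}=\mathcal{A}\Xi$, with the determinant hypothesis guaranteeing invertibility of the relevant block of $e^{\mathcal{A}(T-t)}$ and hence global existence. In fact the paper gives no detailed argument beyond the citation, whereas you spell out the Radon-type construction $\Pi_{1,1}=\mathbf{Y}\mathbf{X}^{-1}$ and the verification of \eqref{Pi111}, so your write-up is, if anything, more complete.
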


Next, we will get the sufficient conditions to guarantee the solvability of two Riccati equations of $\Pi_2(\cdot)$ and $\Pi_3(\cdot)$, respectively, by the same method as discussed above.
\begin{lemma}
Let {\bf (A1)-(A4)} hold and ${\rm det}\bigg\{\begin{pmatrix}
0&I
\end{pmatrix}e^{\mathcal{B}t}\begin{pmatrix}
0\\
I
\end{pmatrix}
\bigg\}>0,\ t\in[0,T]$ hold, and \eqref{Pi1} admits a unique solution $\Pi_1(\cdot)$. Then \eqref{Pi2} admits a unique solution $\Pi_2(\cdot)$ which has the following representation
\begin{equation}
\Pi_2(t)=-\Bigg [\begin{pmatrix}0&I
\end{pmatrix}e^{\mathcal{B}(T-t)}\begin{pmatrix}
0\\
I
\end{pmatrix}\Bigg]^{-1}\begin{pmatrix}0&I
\end{pmatrix}e^{\mathcal{B}(T-t)}\begin{pmatrix}
I\\
0
\end{pmatrix},
\end{equation}
where
\begin{equation}
\mathcal{B}=\begin{pmatrix}
\mathcal{A}_1+\mathcal{A}_2+\mathcal{B}_1\Pi_1&\mathcal{B}_1\\
-(\Pi_1\mathcal{A}_2+\mathcal{A}_2\Pi_1)&-(\mathcal{A}_1+\mathcal{A}_2+\Pi_1\mathcal{B}_1)
\end{pmatrix}.
\end{equation}
\end{lemma}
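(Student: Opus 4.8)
\quad The plan is to mirror the argument used for Lemma~3.10 and to invoke Theorem~5.3 of Yong~\cite{Yong99}, the simplification here being that \eqref{Pi2} already carries the zero terminal datum $\Pi_2(T)=0$, so no preliminary shift (the analogue of \eqref{Pi111} reducing to \eqref{Pi11}) is needed and the theorem applies directly to \eqref{Pi2} itself. First I would rewrite \eqref{Pi2} in the standard symmetric Riccati form $\dot{\Pi}_2+\Pi_2\mathbb{A}+\mathbb{A}^{\top}\Pi_2+\Pi_2\mathcal{B}_1\Pi_2+\mathbb{C}=0$, $\Pi_2(T)=0$, with $\mathbb{A}:=\mathcal{A}_1+\mathcal{A}_2+\mathcal{B}_1\Pi_1$ and $\mathbb{C}:=\Pi_1\mathcal{A}_2+\mathcal{A}_2\Pi_1$. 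Since $\mathcal{A}_1,\mathcal{A}_2$ are diagonal, $\mathcal{B}_1$ is symmetric, and the solution $\Pi_1(\cdot)$ of \eqref{Pi1} is symmetric (being the solution of the symmetric Riccati equation \eqref{Pi1}), one checks that $\mathcal{A}_1+\mathcal{A}_2+\Pi_1\mathcal{B}_1=\mathbb{A}^{\top}$ and $\mathbb{C}=\mathbb{C}^{\top}$, so \eqref{Pi2} is genuinely of symmetric type. Note the ``control weight'' $\mathcal{B}_1$ is sign-indefinite, which is exactly why classical Riccati theory does not apply and Yong's Hamiltonian-system criterion is the right tool.

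Next, under \textbf{(A1)--(A4)} and the standing hypothesis that \eqref{Pi1} has a unique solution $\Pi_1(\cdot)$ — which is continuous, hence bounded, on $[0,T]$ — the coefficient matrices $\mathbb{A}(\cdot)$, $\mathcal{B}_1(\cdot)$ and $\mathbb{C}(\cdot)$ are bounded and the associated Hamiltonian generator is precisely the $4\times4$ matrix $\mathcal{B}$ displayed in the statement, whose $(1,1),(1,2),(2,1),(2,2)$ blocks are $\mathbb{A}$, $\mathcal{B}_1$, $-\mathbb{C}$, $-\mathbb{A}^{\top}$ respectively. I would then consider the fundamental solution of $\dot{\Psi}=\mathcal{B}\Psi$ (which in the constant-coefficient regime is $e^{\mathcal{B}(T-t)}$), and apply Theorem~5.3 of Yong~\cite{Yong99}: the Riccati equation \eqref{Pi2} admits a unique global solution on $[0,T]$ if and only if the lower-right block $\begin{pmatrix}0&I\end{pmatrix}e^{\mathcal{B}(T-t)}\begin{pmatrix}0\\I\end{pmatrix}$ is invertible for every $t\in[0,T]$, which is guaranteed by the hypothesis ${\rm det}\{\cdots\}>0$. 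The stated feedback representation of $\Pi_2(\cdot)$ then follows verbatim from Yong's formula, exactly as in Lemma~3.10; a sanity check at $t=T$ ($e^{\mathcal{B}\cdot 0}=I_4$) recovers $\Pi_2(T)=0$.

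The part requiring the most care is purely structural bookkeeping: verifying the symmetry of $\Pi_1$ so that \eqref{Pi2} is symmetric and the Hamiltonian matrix really has the stated block structure; matching the sign conventions so that $-\mathbb{C}$ occupies the $(2,1)$ position and $-\mathbb{A}^{\top}$ the $(2,2)$ position of $\mathcal{B}$; and, since $\Pi_1$ is in general time-dependent, justifying the use of $e^{\mathcal{B}(T-t)}$ in place of the general transition matrix of $\dot{\Psi}=\mathcal{B}\Psi$ (i.e.\ restricting to the regime in which $\mathbb{A}$ and $\mathbb{C}$ are constant, consistently with the convention already adopted in Lemma~3.10, or reinterpreting $e^{\mathcal{B}(T-t)}$ as that transition matrix). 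The invertibility condition itself is assumed, so beyond these checks the argument is routine and identical in spirit to the preceding lemma.
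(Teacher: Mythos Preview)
Your proposal is correct and follows essentially the same approach as the paper, which simply states that the lemma is obtained ``by the same method as discussed above,'' i.e., by a direct application of Theorem~5.3 of Yong~\cite{Yong99} to \eqref{Pi2} (no shift being needed since $\Pi_2(T)=0$). Your additional care about the symmetry of $\Pi_1$, the block identification of the Hamiltonian matrix $\mathcal{B}$, and the transition-matrix reinterpretation of $e^{\mathcal{B}(T-t)}$ in the time-varying case are all apt and make explicit what the paper leaves implicit.
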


\begin{lemma}
Let {\bf (A1)-(A4)} hold and ${\rm det}\bigg\{\begin{pmatrix}
0&I
\end{pmatrix}e^{\mathcal{C}t}\begin{pmatrix}
0\\
I
\end{pmatrix}
\bigg\}>0,\ t\in[0,T]$ hold, and \eqref{Pi1} admits a unique solution $\Pi_1(\cdot)$, \eqref{Pi2} admits a unique solution $\Pi_2(\cdot)$. Then \eqref{Pi3} admits a unique solution $\Pi_3(\cdot)$ which has the following representation
\begin{equation}
\Pi_3(t)=-\Bigg [\begin{pmatrix}0&I
\end{pmatrix}e^{\mathcal{C}(T-t)}\begin{pmatrix}
0\\
I
\end{pmatrix}\Bigg]^{-1}\begin{pmatrix}0&I
\end{pmatrix}e^{\mathcal{C}(T-t)}\begin{pmatrix}
I\\
0
\end{pmatrix},
\end{equation}
where
\begin{equation}
\left\{
\begin{aligned}
&\kappa=\mathcal{A}_1+\mathcal{A}_2-\mathcal{D}_1\bar{R}^{-1}\mathcal{D}_2^\top+(\mathcal{B}_1-\mathcal{D}_1\bar{R}^{-1}\mathcal{D}_1^\top)(\Pi_1+\Pi_2),\\
&\varphi=(\Pi_1+\Pi_2)\mathcal{D}_1\bar{R}^{-1}\mathcal{D}_1^\top(\Pi_1+\Pi_2)+\mathcal{D}_2\bar{R}^{-1}\mathcal{D}_1^\top(\Pi_1+\Pi_2)\\
&\quad+(\Pi_1+\Pi_2)\mathcal{D}_1\bar{R}^{-1}\mathcal{D}_2^\top+\mathcal{D}_2\bar{R}^{-1}\mathcal{D}_2^\top\\
&\mathcal{B}=\begin{pmatrix}
\kappa&\mathcal{B}_1-\mathcal{D}_1\bar{R}^{-1}\mathcal{D}_1^\top\\
\varphi&-\kappa^\top
\end{pmatrix}.
\end{aligned}
\right.
\end{equation}
\end{lemma}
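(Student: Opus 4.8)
\quad The plan is to follow, almost verbatim, the argument used for Lemmas 3.9 and 3.10: linearise the matrix Riccati differential equation \eqref{Pi3} by Radon's substitution and read the unique solution on $[0,T]$ off the transition matrix $e^{\mathcal{C}(T-t)}$ of the associated Hamiltonian system, invoking Theorem 5.3 of Yong \cite{Yong99}. In contrast with Lemma 3.9, the terminal datum of \eqref{Pi3} is already $\Pi_3(T)=0$, so no preliminary shift of the unknown is needed and the representation comes out directly.

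First I would record that every coefficient of \eqref{Pi3} is well defined, deterministic and uniformly bounded on $[0,T]$: by Lemmas 3.9 and 3.10, $\Pi_1(\cdot)$ and $\Pi_2(\cdot)$ exist and are unique on $[0,T]$, and since $\mathcal{A}_1,\mathcal{A}_2,\mathcal{A}_3,\mathcal{B}_1,\mathcal{M}$ are symmetric (each is a diagonal, or off-diagonal, block of bounded scalar functions) and the inhomogeneous terms $\mathcal{A}_3$ in \eqref{Pi1} and $\Pi_1\mathcal{A}_2+\mathcal{A}_2\Pi_1$ in \eqref{Pi2} are symmetric, both $\Pi_1(\cdot)$ and $\Pi_2(\cdot)$ are symmetric; hence $\mathcal{S}:=\mathcal{B}_1-\mathcal{D}_1\bar{R}^{-1}\mathcal{D}_1^\top$ and the function $\varphi(\cdot)$ of the statement are symmetric. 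The key reduction is to verify that \eqref{Pi3} is exactly the symmetric Riccati equation $\dot{\Pi}_3+\Pi_3\kappa+\kappa^\top\Pi_3+\Pi_3\mathcal{S}\Pi_3-\varphi=0$, $\Pi_3(T)=0$, with $\kappa$ as in the statement; this uses that $\mathcal{A}_1+\mathcal{A}_2$ is symmetric and $\mathcal{D}_2\bar{R}^{-1}\mathcal{D}_1^\top=(\mathcal{D}_1\bar{R}^{-1}\mathcal{D}_2^\top)^\top$, so that the left coefficient of $\Pi_3$ in \eqref{Pi3} is precisely $\kappa^\top$. In particular any local solution of \eqref{Pi3} stays symmetric, and, the Riccati vector field being locally Lipschitz, \eqref{Pi3} has a unique maximal solution.

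Next I would linearise: look for $(U(\cdot),V(\cdot))$ solving $\frac{d}{dt}\!\begin{pmatrix}U\\V\end{pmatrix}=\mathcal{C}\begin{pmatrix}U\\V\end{pmatrix}$ with the Hamiltonian matrix $\mathcal{C}$ of the statement and terminal data $U(T)=I$, $V(T)=0$ (forced by $\Pi_3(T)=0$), and check by direct differentiation that, wherever $U(t)$ is invertible, $\Pi_3(t):=V(t)U(t)^{-1}$ solves \eqref{Pi3}. Writing $e^{\mathcal{C}(T-t)}$ in $2\times2$ block form, $U(t)$ and $V(t)$ are identified with $\begin{pmatrix}0&I\end{pmatrix}e^{\mathcal{C}(T-t)}\begin{pmatrix}0\\I\end{pmatrix}$ and $\begin{pmatrix}0&I\end{pmatrix}e^{\mathcal{C}(T-t)}\begin{pmatrix}I\\0\end{pmatrix}$, which produces the announced formula. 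Finally, the hypothesis ${\rm det}\big\{\begin{pmatrix}0&I\end{pmatrix}e^{\mathcal{C}t}\begin{pmatrix}0\\I\end{pmatrix}\big\}>0$ on $[0,T]$ guarantees that this block is invertible throughout $[0,T]$, so the formula defines a finite (symmetric) matrix on all of $[0,T]$; by uniqueness of the maximal solution it is the solution of \eqref{Pi3} on $[0,T]$.

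I expect the only genuinely delicate point to be this normal-form reduction: although $\mathcal{A}_1,\mathcal{A}_2$ and the cross terms $\mathcal{D}_1\bar{R}^{-1}\mathcal{D}_2^\top,\mathcal{D}_2\bar{R}^{-1}\mathcal{D}_1^\top$ are individually non-symmetric, one must see that, after absorbing $\Pi_1+\Pi_2$ into $\kappa$, equation \eqref{Pi3} lands exactly in the symmetric Hamiltonian form to which Yong's representation applies. Beyond that, the argument is the same computation as for Lemmas 3.9 and 3.10, and it is inherently sequential: its hypotheses presuppose the solvability of \eqref{Pi1} and \eqref{Pi2} already established there.
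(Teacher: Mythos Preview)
Your proposal is correct and follows exactly the route the paper indicates: the paper states this lemma without proof, merely writing ``by the same method as discussed above,'' i.e., Theorem~5.3 of Yong~\cite{Yong99} via Radon's linearisation, which is precisely what you carry out. Your additional care in verifying that \eqref{Pi3} can be rewritten as $\dot{\Pi}_3+\kappa^\top\Pi_3+\Pi_3\kappa+\Pi_3(\mathcal{B}_1-\mathcal{D}_1\bar{R}^{-1}\mathcal{D}_1^\top)\Pi_3-\varphi=0$ is a useful check that the paper leaves implicit. One small correction: you describe $\mathcal{A}_1,\mathcal{A}_2$ as ``individually non-symmetric,'' but in this paper they are diagonal (hence symmetric), which in fact makes the normal-form reduction immediate once $\Pi_1,\Pi_2$ are known to be symmetric.
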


We summarize the analysis above in the following theorem.
\begin{theorem}
Let {\bf (A1)-(A4)} hold, $\Pi_1(\cdot),\Pi_2(\cdot)$ and $\Pi_3(\cdot)$ satisfy \eqref{Pi1}, \eqref{Pi2} and \eqref{Pi3}, respectively. Let $\Phi(\cdot)$ be the solution to \eqref{Phi}, $\check{\Phi}(\cdot)$ be the solution to \eqref{checkPhi}, $\check{X}(\cdot)$ be the $\mathcal{F}_t^{Y_2}$-adapted solution to
\begin{equation}\label{checkXX}
\left\{
\begin{aligned}
d\check{X}(t)&=\big\{\big[\mathcal{A}_1+\mathcal{A}_2-\mathcal{D}_1\bar{R}^{-1}\mathcal{D}_2^\top+(\mathcal{B}_1-\mathcal{D}_1\bar{R}^{-1}\mathcal{D}_1^\top)(\Pi_1+\Pi_2+\Pi_3)\big]\check{X}\\
             &\qquad+(\mathcal{B}_1-\mathcal{D}_1\bar{R}^{-1}\mathcal{D}_1^\top)\check{\Phi}-\mathcal{D}_1\bar{R}^{-1}\bar{r}+\mathcal{C}_1\big\}dt+\Sigma_1d{W}(t),\ t\in[0,T],\\
 \check{X}(0)&=X_0,
\end{aligned}
\right.
\end{equation}
and $\hat{X}(\cdot)$ be the $\mathcal{F}_t^{Y_1}$-adapted solution to
\begin{equation}\label{hatXX}
\left\{
\begin{aligned}
d\hat{X}(t)&=\big\{\big[\mathcal{A}_1+\mathcal{A}_2+\mathcal{B}_1(\Pi_1+\Pi_2)\big]\hat{X}+\big[\mathcal{B}_1\Pi_3-\mathcal{D}_1\bar{R}^{-1}\mathcal{D}_1^\top(\Pi_1+\Pi_2+\Pi_3)\\
           &\qquad-\mathcal{D}_1\bar{R}^{-1}\mathcal{D}_2^\top\big]\check{X}+\mathcal{B}_1\Phi-\mathcal{D}_1\bar{R}^{-1}\mathcal{D}_1^\top\check{\Phi}-\mathcal{D}_1\bar{R}^{-1}\bar{r}+\mathcal{C}_1\big\}dt\\
           &\quad+(\Sigma_1+\mathcal{P}F)d\widetilde{W}(t),\ t\in[0,T],\\
 \hat{X}(0)&=X_0.
\end{aligned}
\right.
\end{equation}
Then, due to the relation \eqref{Y3X}, the FBSDE \eqref{together3} admit a unique solution $(X(\cdot),Y(\cdot),Z_1(\cdot),\\Z_2(\cdot),Z_3(\cdot))$ and the state estimate feedback representation of optimal control $\bar{v}_2(\cdot)$ of the leader can be given by
\begin{equation}\label{barv22}
\bar{v}_2(t)=-\bar{R}^{-1}\big[(\mathcal{D}_1^\top(\Pi_1+\Pi_2+\Pi_3)+\mathcal{D}_2^\top)\check{X}+\mathcal{D}_1^\top\check{\Phi}+\bar{r}\big],\ a.e.t\in[0,T],\ a.s.
\end{equation}
\end{theorem}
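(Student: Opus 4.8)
The plan is to establish the theorem in three stages: (I) construct a solution of the coupled FBSDFE \eqref{together3} via the decoupling ansatz \eqref{Y3X}; (II) prove this solution is unique, following the combined idea of Yong \cite{Yong02} and Lim and Zhou \cite{LZ01}; (III) deduce the feedback representation \eqref{barv22} from the optimality condition \eqref{maximum condition3}. As preparation, note that under {\bf (A1)-(A4)} and the stated nondegeneracy determinant conditions the Riccati equations \eqref{Pi1}, \eqref{Pi2}, \eqref{Pi3} have unique solutions $\Pi_1(\cdot),\Pi_2(\cdot),\Pi_3(\cdot)$; the matrix Riccati equation \eqref{mathcalP} has a unique solution $\mathcal{P}(\cdot)$; and \eqref{Phi} (resp. \eqref{checkPhi}) is a linear BSDE driven solely by the $\mathcal{F}_t^{Y_1}$- (resp. $\mathcal{F}_t^{Y_2}$-) Brownian motion, so it has a unique adapted pair $(\Phi(\cdot),\Theta(\cdot))$ (resp. $(\check\Phi(\cdot),\check\Theta(\cdot))$); since $\mathcal{F}_t^{Y_2}\subseteq\mathcal{F}_t^{Y_1}$, taking $\mathbb{E}[\,\cdot\,|\mathcal{F}_t^{Y_2}]$ in \eqref{Phi} and using Theorem~8.1 in \cite{LS77} confirms that $\mathbb{E}[\Phi|\mathcal{F}_t^{Y_2}]$ solves \eqref{checkPhi}, so the notation is consistent.

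For stage (I), I would assemble the candidate solution in the order: $\check\Phi$ from \eqref{checkPhi}; then $\check X$ from the linear $\mathcal{F}_t^{W}$-SDE \eqref{checkXX}; then $\Phi$ from \eqref{Phi}; then $\hat X$ from the linear $\mathcal{F}_t^{Y_1}$-SDE \eqref{hatXX}, whose coefficients involve only $\check X,\check\Phi,\Phi$ and are therefore $\mathcal{F}_t^{Y_1}$-adapted; and finally $X$ from the forward equation of \eqref{together3} with $Y$ eliminated by \eqref{Y3X} and $\check Y$ eliminated by $\check Y=(\Pi_1+\Pi_2+\Pi_3)\check X+\check\Phi$, which is a linear $\mathcal{F}_t$-SDE with a unique solution. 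I then set $Y:=\Pi_1X+\Pi_2\hat X+\Pi_3\check X+\Phi$, $Z_1:=(\Pi_1+\Pi_3)\Sigma_1$, $Z_2:=\Pi_1\Sigma_2$, $Z_3:=\Pi_2(\Sigma_1+\mathcal{P}F)+\Theta$, and verify: (a) $\mathbb{E}[X|\mathcal{F}_t^{W}]=\check X$ and $\mathbb{E}[X|\mathcal{F}_t^{Y_1}]=\hat X$, by computing the $\mathcal{F}_t^{W}$- and $\mathcal{F}_t^{Y_1}$-filters of the constructed $X$ through Theorem~8.1 in \cite{LS77}, Lemma~5.4 in \cite{Xiong08} and the filtering results of this section, and matching them with \eqref{checkXX} and \eqref{hatXX} by uniqueness of linear (filtering) SDEs; (b) the backward equation of \eqref{together3} holds with the above $Z_1,Z_2,Z_3$ --- this is the computation \eqref{YY}--\eqref{zero-zero} read in reverse, in which \eqref{Pi1}--\eqref{Pi3}, \eqref{mathcalP} and \eqref{Phi} annihilate the coefficients of $X,\hat X,\check X,dW,d\bar W$; (c) $X(0)=X_0$ by construction and $Y(T)=\mathcal{M}X(T)+\mathcal{M}_T$ because $\Pi_1(T)=\mathcal{M}$, $\Pi_2(T)=\Pi_3(T)=0$ and $\Phi(T)=\mathcal{M}_T$. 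This gives a solution $(X,Y,Z_1,Z_2,Z_3)$ of \eqref{together3}.

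For stage (II), following \cite{LZ01}, let $(X,Y,Z_1,Z_2,Z_3)$ and $(X',Y',Z_1',Z_2',Z_3')$ be two adapted solutions and write $\delta X,\delta Y,\delta Z_i$ for their differences, with $\delta\hat X,\delta\check X$ the associated conditional means; then $(\delta X,\delta Y,\delta Z_i)$ solves the homogeneous version of \eqref{together3} (zero initial and terminal data, $\bar r=\mathcal{C}_1=\mathcal{C}_2=0$). Introduce $\widetilde Y:=\delta Y-\Pi_1\delta X-\Pi_2\delta\hat X-\Pi_3\delta\check X$ with the same $\Pi_i$; using the filtering equations for $\delta\hat X,\delta\check X$ and the Riccati identities \eqref{Pi1}--\eqref{Pi3}, which were designed precisely so that the coefficients of $\delta X,\delta\hat X,\delta\check X$ cancel, one obtains that $\widetilde Y$ solves a homogeneous linear BSDE driven by $\widetilde W$ with zero terminal value whose generator is linear in $\widetilde Y$ and its conditional means; Gronwall's inequality together with the conditional Jensen bounds $\mathbb{E}|\mathbb{E}[\widetilde Y|\mathcal{F}_t^{Y_i}]|^2\le\mathbb{E}|\widetilde Y|^2$ then forces $\widetilde Y\equiv0$. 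Hence $\delta Y=\Pi_1\delta X+\Pi_2\delta\hat X+\Pi_3\delta\check X$, the forward equation for $\delta X$ closes into a homogeneous linear SDE with zero initial datum, so $\delta X\equiv0$, and therefore $\delta Y\equiv0$ and, by the martingale representation against $W,\bar W,\widetilde W$, $\delta Z_i\equiv0$. Thus \eqref{together3} is uniquely solvable, and through the identification \eqref{notation} so are \eqref{lambda} and \eqref{K}. For stage (III), taking $\mathbb{E}[\,\cdot\,|\mathcal{F}_t^{Y_2}]$ in \eqref{Y3X} and using $\mathcal{F}_t^{Y_2}\subseteq\mathcal{F}_t^{Y_1}$, so that $\mathbb{E}[\hat X|\mathcal{F}_t^{Y_2}]=\check X$, gives $\check Y=(\Pi_1+\Pi_2+\Pi_3)\check X+\check\Phi$; substituting this into \eqref{maximum condition3} and inverting $\bar R$ by {\bf (A4)} yields \eqref{barv22}, which belongs to $\mathcal{U}_{ad}^l[0,T]$ and, by the leader's maximum principle and its sufficiency, is the optimal control of the leader.

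I expect stage (II), the uniqueness argument, to be the main obstacle: \eqref{together3} is a fully coupled FBSDE carrying two genuinely different conditional-mean-field terms --- $(\hat X,\hat Y)$ relative to $\mathcal{F}_t^{Y_1}$ and $(\check X,\check Y)$ relative to $\mathcal{F}_t^{Y_2}$ --- so neither the monotonicity method of \cite{PW99} nor a direct contraction mapping is available, and one has to exploit the precise algebraic cancellations encoded in \eqref{Pi1}--\eqref{Pi3} to reduce the difference of two solutions to a purely linear homogeneous system. A secondary subtlety is the consistency check (a) in stage (I), because the mean-square-error equation \eqref{mathcalP} used in deriving \eqref{hatX} already presupposes the ansatz \eqref{Y3X} (its $X$-coefficient $\mathcal{A}_1+\mathcal{B}_1\Pi_1$ comes from substituting \eqref{Y3X} into the drift of $X$); this is resolved by reading the whole construction as a single self-consistent verification rather than a fixed-point iteration.
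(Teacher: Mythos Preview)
Your stages (I) and (III) coincide with the paper's proof of this theorem: the paper likewise solves $\Pi_1,\Pi_2,\Pi_3$ sequentially, then $\check\Phi,\Phi,\check X,\hat X$ by standard BSDE/SDE theory, reads off a solution of \eqref{together3} from the ansatz \eqref{Y3X}, and obtains \eqref{barv22} by projecting \eqref{Y3X} onto $\mathcal{F}_t^{Y_2}$ and substituting into \eqref{maximum condition3}. In the paper the proof of the theorem itself stops at existence and the feedback formula; uniqueness is established \emph{after} the theorem, by a different route than yours.

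Your stage (II) is a genuinely different uniqueness argument. The paper does not use the residual $\widetilde Y=\delta Y-\Pi_1\delta X-\Pi_2\delta\hat X-\Pi_3\delta\check X$ and Gronwall; instead it works layer by layer on the filtered subsystems, applying It\^o's formula to $\langle\check X^{12},\check Y^{12}\rangle$, then $\langle\hat X^{12},\hat Y^{12}\rangle$, then $\langle X^{12},Y^{12}\rangle$, and invokes three extra positivity assumptions {\bf(A5)}--{\bf(A7)} on $\mathcal{A}_3-\Pi\mathcal{B}_1\Pi$--type matrices to force each layer to vanish. Your approach, if carried out carefully, dispenses with {\bf(A5)}--{\bf(A7)} altogether, which is a real gain. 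Two points of your sketch deserve tightening: (i) the BSDE satisfied by $\widetilde Y$ is not driven by $\widetilde W$ alone but by all of $W,\bar W,\widetilde W$ (the $\delta Z_1\,dW+\delta Z_2\,d\bar W$ terms survive); this is harmless since the martingale part disappears under $\mathbb{E}[\,\cdot\,|\mathcal{F}_t]$ before Gronwall; (ii) after $\widetilde Y\equiv0$ the forward equation for $\delta X$ does not literally ``close'' --- its drift still contains $\delta\hat X$ and $\delta\check X$ --- but a second Gronwall step using $\mathbb{E}|\delta\hat X|^2\le\mathbb{E}|\delta X|^2$ (after first disposing of $\delta\check X$ via its closed linear ODE) yields $\delta X\equiv0$. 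With these adjustments your uniqueness route is correct and cleaner than the paper's energy method, at the price of a slightly more delicate filtering bookkeeping; the paper's route is more direct once one accepts the additional structural hypotheses {\bf(A5)}--{\bf(A7)}.
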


\begin{proof}
The Riccati equation of $\Pi_1(\cdot),\Pi_2(\cdot)$ and $\Pi_3(\cdot)$ are solvable sequentially. Then, by the standard BSDE theory, \eqref{checkPhi} admits a unique $\mathcal{F}_t^{Y_2}$-adapted solution $(\check{\Phi}(\cdot),\check{\Theta}(\cdot))$. Similarly, \eqref{Phi} admits a unique $\mathcal{F}_t^{Y_1}$-adapted solution $(\Phi(\cdot),\Theta(\cdot))$. By the standard SDE theory, $\check{X}(\cdot)$ and $\hat{X}(\cdot)$ are the $\mathcal{F}_t^{Y_2}$-adapted and $\mathcal{F}_t^{Y_1}$-adapted solutions to \eqref{checkXX} and \eqref{hatXX}, respectively. Noting \eqref{Y3X}, the non-standard FBSDE \eqref{together3} admit a solution $(X(\cdot),Y(\cdot),Z_1(\cdot),Z_2(\cdot),Z_3(\cdot))$ and finally from \eqref{maximum condition3}, $\bar{v}_2(\cdot)$ can be given by \eqref{barv22}. The proof is complete.
\end{proof}

\begin{Remark}
By Theorem 3.6, we can prove the existence of the solutions to the conditional mean-field FBSDE \eqref{together3} via decoupling technique. More specific, $(\check{\Phi}(\cdot),\check{\Theta}(\cdot))$ is first given by \eqref{checkPhi} and then $(\Phi(\cdot),\Theta(\cdot))$ by \eqref{Phi}. Thus $\check{X}(\cdot)$ and $\hat{X}(\cdot)$ are given by \eqref{checkXX} and \eqref{hatXX}, respectively. Next, by \eqref{Y3X}, $\check{Y}(\cdot)$ and $\hat{Y}(\cdot)$ are obtained and the FBSDFE \eqref{together3} admit a solution $(X(\cdot),Y(\cdot),Z_1(\cdot),Z_2(\cdot),Z_3(\cdot))$. Next, we will prove the uniqueness of solutions to \eqref{together3} inspired by the method of \cite{LZ01}.
\end{Remark}

Noticing that there are two different kinds of filtering of $X$ and $Y$, $(\hat{X}(\cdot),\hat{Y}(\cdot))$ and $(\check{X}(\cdot),\check{Y}(\cdot))$, therefore, we first solve them and return to think about the equation of $(X(\cdot),Y(\cdot),Z_1(\cdot),Z_2(\cdot),\\Z_3(\cdot))$. Because of the case, $\mathcal{F}_t^{Y_2}\subset\mathcal{F}_t^{Y_1}$, we then applying Lemma 5.4 in \cite{Xiong08} and Theorem 8.1 in \cite{LS77} to the forward and backward SDEs in \eqref{together3}, respectively,
\begin{equation}\label{together4}
\left\{
\begin{aligned}
 d\check{X}(t)&=\big[(\mathcal{A}_1+\mathcal{A}_2-\mathcal{D}_1\bar{R}^{-1}\mathcal{D}_2^\top)\check{X}+(\mathcal{B}_1-\mathcal{D}_1\bar{R}^{-1}\mathcal{D}_1^\top)\check{Y}-\mathcal{D}_1\bar{R}^{-1}\bar{r}+\mathcal{C}_1\big]dt\\
      &\quad+\Sigma_1dW(t),\\
-d\check{Y}(t)&=\big[(\mathcal{A}_3-\mathcal{D}_2\bar{R}^{-1}\mathcal{D}_2^\top)\check{X}+(\mathcal{A}_1+\mathcal{A}_2-\mathcal{D}_2\bar{R}^{-1}\mathcal{D}_1^\top)\check{Y}-\mathcal{D}_2\bar{R}^{-1}\bar{r}+\mathcal{C}_2\big]dt\\
      &\quad-\check{Z}_{13}dW(t),\quad t\in[0,T],\\
  \check{X}(0)&=X_0,\ \check{Y}(T)=\mathcal{M}\check{X}(T)+\mathcal{M}_T.
\end{aligned}
\right.
\end{equation}
where $\check{Z}_{13}(\cdot):=\check{Z}_1(\cdot)+\check{Z}_3(\cdot)$. Let $(X^1(\cdot),Y^1(\cdot),Z_1^1(\cdot),Z_2^1(\cdot),Z_3^1(\cdot))$ and $(X^2(\cdot),Y^2(\cdot),Z_1^2(\cdot),Z_2^2(\cdot),\\Z_3^2(\cdot))$ be two solutions to \eqref{together3}, $(\Phi^1(\cdot),\Theta^1(\cdot))$ and $(\Phi^2(\cdot),\Theta^2(\cdot))$ be two solutions to \eqref{Phi}, then $(\check{X}^1(\cdot),\check{Y}^1(\cdot),\check{Z}_{13}^1(\cdot))$ and $(\check{X}^2(\cdot),\check{Y}^2(\cdot),\check{Z}_{13}^2(\cdot))$ are two solutions to \eqref{together4}.
If we set
\begin{equation}
\begin{aligned}
&(X^{12}(\cdot),Y^{12}(\cdot),Z_1^{12}(\cdot),Z_2^{12}(\cdot),Z_3^{12}(\cdot))\\
&\qquad=(X^1(\cdot)-X^2(\cdot),Y^1(\cdot)-Y^2(\cdot),Z_1^1(\cdot)-Z_1^2(\cdot),Z_2^1(\cdot)-Z_2^2(\cdot),Z_3^1(\cdot)-Z_3^2(\cdot)),
\end{aligned}
\end{equation}
thus, $(\check{X}^{12}(\cdot),\check{Y}^{12}(\cdot),\check{Z}_{13}^{12}(\cdot))$ satisfies the following equation:
\begin{equation}\label{together5}
\left\{
\begin{aligned}
 d\check{X}^{12}(t)&=\big[(\mathcal{A}_1+\mathcal{A}_2-\mathcal{D}_1\bar{R}^{-1}\mathcal{D}_2^\top)\check{X}^{12}+(\mathcal{B}_1-\mathcal{D}_1\bar{R}^{-1}\mathcal{D}_1^\top)\check{Y}^{12}\big]dt,\\
-d\check{Y}^{12}(t)&=\big[(\mathcal{A}_3-\mathcal{D}_2\bar{R}^{-1}\mathcal{D}_2^\top)\check{X}^{12}+(\mathcal{A}_1+\mathcal{A}_2-\mathcal{D}_2\bar{R}^{-1}\mathcal{D}_1^\top)\check{Y}^{12}\big]dt\\
      &\quad-\check{Z}_{13}^{12}dW(t),\quad t\in[0,T],\\
  \check{X}^{12}(0)&=0,\ \check{Y}^{12}(T)=\mathcal{M}\check{X}^{12}(T).
\end{aligned}
\right.
\end{equation}
Applying It\^{o}'s formula to $\langle \check{X}^{12}(\cdot),\check{Y}^{12}(\cdot)\rangle$, we have
\begin{equation}\label{checkXY}
\begin{aligned}
\mathbb{E}&\langle\check{X}^{12}(T),\mathcal{M}\check{X}^{12}(T)\rangle\\
&=-\mathbb{E}\int_0^T\big[\langle(\mathcal{A}_3-\mathcal{D}_2\bar{R}^{-1}\mathcal{D}_2^\top)\check{X}^{12},\check{X}^{12}\rangle
+\langle(\mathcal{D}_1\bar{R}^{-1}\mathcal{D}_1^\top-\mathcal{B}_1)\check{Y}^{12},\check{Y}^{12}\rangle\big]dt\geq0,
\end{aligned}
\end{equation}
and based on the proof of the existence of solutions, the relation \eqref{Y3X} keeps holding in the process. Then we have $\check{Y}(\cdot)=(\Pi_1+\Pi_2+\Pi_3)\check{X}(\cdot)+\check{\Phi}(\cdot)$ and can further obtain $\check{Y}^{12}(\cdot)=(\Pi_1+\Pi_2+\Pi_3)\check{X}^{12}(\cdot)+\check{\Phi}^{12}(\cdot)$, where, in fact, $\check{\Phi}^{12}(\cdot):=\check{\Phi}^1(\cdot)-\check{\Phi}^2(\cdot)=0$ due to the existence and uniqueness of solution to standard linear BSDE \eqref{checkPhi}. Then \eqref{checkXY} becomes
\begin{equation}\label{checkXY2}
\begin{aligned}
\mathbb{E}&\langle\check{X}^{12}(T),\mathcal{M}\check{X}^{12}(T)\rangle=-\mathbb{E}\int_0^T\Big\{\big\langle[\mathcal{A}_3-\mathcal{D}_2\bar{R}^{-1}\mathcal{D}_2^\top\\
&+(\Pi_1+\Pi_2+\Pi_3)(\mathcal{D}_1\bar{R}^{-1}\mathcal{D}_1^\top-\mathcal{B}_1)(\Pi_1+\Pi_2+\Pi_3)]\check{X}^{12},\check{X}^{12}\big\rangle\Big\}dt\geq0.
\end{aligned}
\end{equation}
{\bf (A5)} Assuming that $\mathcal{A}_3-\mathcal{D}_2\bar{R}^{-1}\mathcal{D}_2^\top+(\Pi_1+\Pi_2+\Pi_3)(\mathcal{D}_1\bar{R}^{-1}\mathcal{D}_1^\top-\mathcal{B}_1)(\Pi_1+\Pi_2+\Pi_3)>0$,
then we get
\begin{equation}
\begin{aligned}
\mathbb{E}\int_0^T\Big\{\langle[\mathcal{A}_3-\mathcal{D}_2\bar{R}^{-1}\mathcal{D}_2^\top+(\Pi_1+\Pi_2+\Pi_3)(\mathcal{D}_1\bar{R}^{-1}\mathcal{D}_1^\top
-\mathcal{B}_1)(\Pi_1+\Pi_2+\Pi_3)]\check{X}^{12},\check{X}^{12}\rangle\Big\}dt=0.
\end{aligned}
\end{equation}
Thus, we have
\begin{equation}\label{checkX0}
\check{X}^{12}(t)=0,\ a.e. t\in[0,T],\ a.s.
\end{equation}
Substituting \eqref{checkX0} into \eqref{together5} and combining with the case of $\check{Y}^{12}(\cdot)=(\Pi_1+\Pi_2+\Pi_3)\check{X}^{12}(\cdot)$, we can verify that $\check{X}^{12}(\cdot)=\check{Y}^{12}(\cdot)=\check{Z}_{13}^{12}(\cdot)=0$ by the uniqueness of solutions to linear SDE and BSDE. Then we obtain the uniqueness of the solution to \eqref{together4}.

Next, applying Theorem 8.1 in \cite{LS77} to \eqref{together3}, we have
 \begin{equation}\label{together6}
\left\{
\begin{aligned}
 d\hat{X}(t)&=\big[(\mathcal{A}_1+\mathcal{A}_2)\hat{X}+\mathcal{B}_1\hat{Y}-\mathcal{D}_1\bar{R}^{-1}\mathcal{D}_1^\top\check{Y}-\mathcal{D}_1\bar{R}^{-1}\mathcal{D}_2^\top\check{X}-\mathcal{D}_1\bar{R}^{-1}\bar{r}+\mathcal{C}_1\big]dt\\
      &\quad+(\Sigma_1+\mathcal{P}F)d\widetilde{W}(t),\\
-d\hat{Y}(t)&=\big[\mathcal{A}_3\hat{X}+(\mathcal{A}_1+\mathcal{A}_2)\hat{Y}-\mathcal{D}_2\bar{R}^{-1}\mathcal{D}_1^\top\check{Y}-\mathcal{D}_2\bar{R}^{-1}\mathcal{D}_2^\top\check{X}-\mathcal{D}_2\bar{R}^{-1}\bar{r}+\mathcal{C}_2\big]dt\\
      &\quad-\hat{\tilde{Z}}(t)d\widetilde{W}(t),\quad t\in[0,T],\\
  \hat{X}(0)&=X_0,\ \hat{Y}(T)=\mathcal{M}\hat{X}(T)+\mathcal{M}_T,
\end{aligned}
\right.
\end{equation}
where $\hat{\tilde{Z}}(\cdot):=\hat{Z}_{1}(\cdot)+Z_3(\cdot)+(\widehat{XY^\top}(\cdot)-\hat{X}(\cdot)\hat{Y}^\top(\cdot))F$. It is easy to verify that $(\hat{X}^{12}(\cdot),\hat{Y}^{12}(\cdot),\hat{\tilde{Z}}^{12}(\cdot))$ satisfies the following equation:
 \begin{equation}\label{together7}
\left\{
\begin{aligned}
 d\hat{X}^{12}(t)&=\big[(\mathcal{A}_1+\mathcal{A}_2)\hat{X}^{12}+\mathcal{B}_1\hat{Y}^{12}\big]dt,\\
-d\hat{Y}^{12}(t)&=\big[\mathcal{A}_3\hat{X}^{12}+(\mathcal{A}_1+\mathcal{A}_2)\hat{Y}^{12}\big]dt-\hat{\tilde{Z}}^{12}(t)d\widetilde{W}(t),\ t\in[0,T],\\
  \hat{X}^{12}(0)&=0,\ \hat{Y}^{12}(T)=\mathcal{M}\hat{X}^{12}(T).
\end{aligned}
\right.
\end{equation}
Applying It\^{o}'s formula to $\langle \hat{X}^{12}(\cdot),\hat{Y}^{12}(\cdot)\rangle$, we have
\begin{equation}\label{hatXY}
\begin{aligned}
\mathbb{E}\langle\hat{X}^{12}(T),\mathcal{M}\hat{X}^{12}(T)\rangle=-\mathbb{E}\int_0^T[\langle\mathcal{A}_3\hat{X}^{12},\hat{X}^{12}\rangle-\langle\mathcal{B}_1\hat{Y}^{12},\hat{Y}^{12}\rangle]dt\geq0.
\end{aligned}
\end{equation}
Similarly, due to the relation \eqref{Y3X}, we have $\hat{Y}(\cdot)=(\Pi_1+\Pi_2)\hat{X}(\cdot)+\Pi_3\check{X}(\cdot)+\Phi(\cdot)$. By \eqref{checkX0} and the uniqueness of the solution to \eqref{Phi}, we can deduce that $\hat{Y}^{12}(\cdot)=(\Pi_1+\Pi_2)\hat{X}^{12}(\cdot)$, thus \eqref{hatXY} becomes
\begin{equation}
\begin{aligned}
\mathbb{E}\langle\hat{X}^{12}(T),\mathcal{M}\hat{X}^{12}(T)\rangle=-\mathbb{E}\int_0^T\big\langle[\mathcal{A}_3-(\Pi_1+\Pi_2)\mathcal{B}_1(\Pi_1+\Pi_2)]\hat{X}^{12},\hat{X}^{12}\big\rangle dt\geq0.
\end{aligned}
\end{equation}
{\bf (A6)} Suppose that $\mathcal{A}_3-(\Pi_1+\Pi_2)\mathcal{B}_1(\Pi_1+\Pi_2)>0$,
then we have
\begin{equation}
\begin{aligned}
\mathbb{E}\int_0^T\big\langle[\mathcal{A}_3-(\Pi_1+\Pi_2)\mathcal{B}_1(\Pi_1+\Pi_2)]\hat{X}^{12},\hat{X}^{12}\big\rangle dt=0,
\end{aligned}
\end{equation}
and it is obvious that $\hat{X}^{12}(t)=0,\ a.e. t\in[0,T],\ a.s.$ Substituting it into \eqref{together7} and combining with $\hat{Y}^{12}(\cdot)=(\Pi_1+\Pi_2)\hat{X}^{12}(\cdot)$, we obtain $\hat{X}^{12}(\cdot)=\hat{Y}^{12}(\cdot)=\hat{\tilde{Z}}^{12}(\cdot)=0$ by the uniqueness of solutions to linear FBSDE \eqref{together7}. Then we obtain the uniqueness of solutions to \eqref{together6}.

Finally, we return to the original equation \eqref{together3}. It is easy to verify that $(X^{12}(\cdot),Y^{12}(\cdot),Z_1^{12}(\cdot),\\Z_2^{12}(\cdot),Z_3^{12}(\cdot))$ are the solution to the following equation
 \begin{equation}\label{together8}
\left\{
\begin{aligned}
 dX^{12}(t)&=\big[\mathcal{A}_1X^{12}+\mathcal{B}_1Y^{12}\big]dt,\\
-dY^{12}(t)&=\big[\mathcal{A}_3X^{12}+\mathcal{A}_1Y^{12}\big]dt-Z_1^{12}dW(t)-Z_2^{12}d\bar{W}(t)-Z_3^{12}d\widetilde{W}(t),\ t\in[0,T],\\
  X^{12}(0)&=0,\ Y^{12}(T)=\mathcal{M}X^{12}(T).
\end{aligned}
\right.
\end{equation}
Applying It\^{o}'s formula to $\langle X^{12}(\cdot),Y^{12}(\cdot)\rangle$, we have
\begin{equation}
\mathbb{E}\langle X^{12}(T),\mathcal{M}X^{12}(T)\rangle=-\mathbb{E}\int_0^T[\langle\mathcal{A}_3 X^{12},X^{12}\rangle-\langle\mathcal{B}_1 Y^{12},Y^{12}\rangle]dt\geq0.
\end{equation}
We can obtain $Y^{12}(\cdot)=\Pi_1X^{12}(\cdot)$ and
\begin{equation}
\mathbb{E}\big\langle X^{12}(T),\mathcal{M}X^{12}(T)\big\rangle=-\mathbb{E}\int_0^T\big\langle(\mathcal{A}_3-\Pi_1\mathcal{B}_1\Pi_1) X^{12},X^{12}\big\rangle]dt\geq0.
\end{equation}
{\bf (A7)} Suppose that $\mathcal{A}_3-\Pi_1\mathcal{B}_1\Pi_1>0$,
then we have
\begin{equation}
\mathbb{E}\int_0^T\big\langle(\mathcal{A}_3-\Pi_1\mathcal{B}_1\Pi_1) X^{12},X^{12}\big\rangle dt=0,
\end{equation}
which means that $X^{12}(t)=0,\ a.e.t\in[0,T],\ a.s.$ Then we have $X^{12}(\cdot)=Y^{12}(\cdot)=Z_1^{12}(\cdot)=Z_2^{12}(\cdot)=Z_3^{12}(\cdot)=0$ with the same analysis in the above. Consequently, we obtain the uniqueness of solutions to \eqref{together3}. In conclusion, we have proved the existence and uniqueness of solutions to \eqref{together3} under assumption {\bf (A1)-(A7)}.

Note that, by the reasonable analysis, we get the unique solvability of \eqref{together3}, which is equivalent to the existence and uniqueness of solutions to \eqref{together}. And naturally, the equations \eqref{lambda} and \eqref{K} admit unique solutions $(\lambda(\cdot),\eta_1(\cdot),\eta_2(\cdot),\eta_3(\cdot))$ and $K(\cdot)$, respectively.

Before the end of this section, by \eqref{Barv2}, \eqref{notation} and \eqref{Y3X}, we derive
\begin{equation}\label{barv11}
\begin{aligned}
\bar{v}_1(t)&=-R^{-1}\big[B_1\Pi\hat{x}^{\bar{v}_2}+B_1\hat{\bar{\theta}}+r\big]\\
&=-R^{-1}\Big\{\Big[\begin{pmatrix}
B_1\Pi&0
\end{pmatrix}+\begin{pmatrix}
0&B_1
\end{pmatrix}(\Pi_1+\Pi_2)\Big]\hat{X}\\
&\qquad\qquad+\begin{pmatrix}
0&B_1
\end{pmatrix}\Pi_3\check{X}+\begin{pmatrix}
0&B_1
\end{pmatrix}\Phi+r\Big\},\ a.e.t\in[0,T],\ a.s.
\end{aligned}
\end{equation}

Up to now, the Stackelberg equilibrium point $(\bar{v}_1(\cdot),\bar{v}_2(\cdot))$ of our problem is obtained.

\section{The Explicit Solutions for Some Special Case}

In this section, we focus on some special case and give some explicit expression of Stackelber equilibrium points for the follower and the leader. The main assumption in this section is that $f_1(\cdot)=0$, that is, $\mathcal{F}_t^{Y_1}=\mathcal{F}_t^{Y_2}=\mathcal{F}_t^W$. In this case, both the follower and the leader can only observe the state system partially, but the known information is same. Moreover, as we mentioned in Remark 2.3, the filtrations becomes $\mathcal{F}_t^{Y_1}=\mathcal{F}_t^W$ and $\mathcal{F}_t^{Y_2}=\mathcal{F}_t^{\bar{W}}$ when $f_1(\cdot)=0$, that is, they also observe the system partially, but the information filtrations are independent. In this later case, it only adds some complexity of the calculation and has no influence on the representation of the explicit solutions. So we omit this case and only discuss the former case, $\mathcal{F}_t^{Y_1}=\mathcal{F}_t^{Y_2}=\mathcal{F}_t^W$.

In this case, we also assume that $M(\cdot)=L(\cdot)=\bar{L}(\cdot)=0$. The main difference is that the observation processes of the follower and the leader become
\begin{equation}\label{sobservation f}
\left\{
\begin{aligned}
dY_1(t)&=g(t)dt+dW(t),\ t\in[0,T],\\
 Y_1(0)&=0,
\end{aligned}
\right.
\end{equation}
\begin{equation}\label{sobservation l}
\left\{
\begin{aligned}
dY_2^{v_2}(t)&=\big[f_2(t)+v_2(t)\big]dt+dW(t),\ t\in[0,T],\\
 Y_2^{v_2}(0)&=0,
\end{aligned}
\right.
\end{equation}
respectively, and the cost functionals become
\begin{equation}\label{scost functional}
\begin{aligned}
J_1(v_1(\cdot),v_2(\cdot))&=\frac{1}{2}\mathbb{E}\bigg\{\int_0^T\big[R(t)v_1^2(t)+2l(t)x^{v_1,v_2}(t)+2r(t)v_1(t)\big]dt+2mx^{v_1,v_2}(T)\bigg\},
\end{aligned}
\end{equation}
\begin{equation}\label{LQ cost functional2}
\begin{aligned}
J_2(v_1(\cdot),v_2(\cdot))&=\frac{1}{2}\mathbb{E}\bigg\{\int_0^T\big[\bar{R}(t)v_2^2(t)+2\bar{l}(t)x^{v_2}(t)+2\bar{r}(t)v_2(t)\big]dt\\
&+\bar{M}(x^{v_2}(T))^2+2\bar{m}x^{v_2}(T)\bigg\}.
\end{aligned}
\end{equation}

\subsection{The Special Case for The Follower}

By Theorem 3.3 in Section 3, we have the optimal control of the follower:
\begin{equation}\label{sBarv2}
\bar{v}_1(t)=-R^{-1}(t)\big[B_1(t)\Pi(t)\hat{\bar{x}}^{\bar{v}_1,v_2}(t)+B_1(t)\hat{\bar{\theta}}(t)+r(t)\big],\ a.e.t\in[0,T],\ a.s.,
\end{equation}
where $\Pi(\cdot)$ satisfy the following special ODE:
\begin{equation}\label{sriccatieq}
\left\{
\begin{aligned}
&\dot{\Pi}(t)+2A(t)\Pi(t)-B_1^2(t)\Pi^2(t)R^{-1}(t)=0,\ t\in[0,T],\\
&\Pi(T)=0.
\end{aligned}
\right.
\end{equation}
Due to the existence and uniqueness of solutions to ODE, we can solve \eqref{sriccatieq} and it has the unique solution $\Pi(t)=0,\ t\in[0,T]$. Then \eqref{sBarv2} reduces to
\begin{equation}\label{ssBarv1}
\bar{v}_1(t)=-R^{-1}(t)\big[B_1(t)\hat{\bar{\theta}}(t)+r(t)\big],\ a.e.t\in[0,T],\ a.s.,
\end{equation}
where $\hat{\bar{\theta}}(\cdot)$ satisfies
\begin{equation}
\left\{
\begin{aligned}
-d\hat{\bar{\theta}}(t)&=(A\hat{\bar{\theta}}+l)dt-\hat{\bar{\lambda}}_1dW(t),\ t\in[0,T],\\
\hat{\bar{\theta}}(T)&=m.
\end{aligned}
\right.
\end{equation}
By the BSDE theory with deterministic terminal value, we can get its explicit solution:
\begin{equation}
(\hat{\bar{\theta}},0)=\bigg(\bigg[\int_0^Tl(s)e^{-2\int_0^sA(r)dr}ds+m\bigg]e^{\int_0^tA(s)ds},0\bigg).
\end{equation}

\subsection{The Special Case for The Leader}

When considering the special condition, $L(\cdot)=\bar{L}(\cdot)=M(\cdot)\equiv0$ and knowing the follower would take the optimal control \eqref{ssBarv1}, the leader's optimal control would reduce to:
\begin{equation}\label{ssBarv2}
\bar{R}(t)\bar{v}_2(t)+\bar{r}(t)+\mathcal{D}_1^\top(t)\hat{Y}(t)=0,\ a.e.t\in[0,T],\ a.s.
\end{equation}
where $(X(\cdot),Y(\cdot))$ satisfies
\begin{equation}\label{stogether}
\left\{
\begin{aligned}
 dX(t)&=\big[\mathcal{A}_1X+\mathcal{B}_1Y+\mathcal{D}_1\bar{v}_2+\mathcal{C}_1\big]dt+\Sigma_1dW(t)+\Sigma_2d\bar{W}(t),\\
-dY(t)&=\big[\mathcal{A}_1Y+\mathcal{C}_2\big]dt-Z_1dW(t)-Z_2d\bar{W}(t),\ t\in[0,T],\\
  X(0)&=X_0,\ Y(T)=\mathcal{M}X(T)+\mathcal{M}_T.
\end{aligned}
\right.
\end{equation}
Let
\begin{equation}\label{notation}
X=
\begin{pmatrix}
x^{\bar{v}_2}\\
K
\end{pmatrix},\ \
Y=
\begin{pmatrix}
\lambda\\
\hat{\bar{\theta}}
\end{pmatrix},\ \
Z_1=
\begin{pmatrix}
\eta_1\\
\hat{\bar{\lambda}}_1
\end{pmatrix},\ \
Z_2=
\begin{pmatrix}
\eta_2\\
0
\end{pmatrix},\ \
\end{equation}
and
\begin{equation*}
\left\{
\begin{aligned}
&\mathcal{A}_1=
\begin{pmatrix}
A&0\\
0&A
\end{pmatrix},\ \
\mathcal{A}_2=
\begin{pmatrix}
0&0\\
0&0
\end{pmatrix},\ \
\mathcal{A}_3=
\begin{pmatrix}
0&0\\
0&0
\end{pmatrix},\\
&\mathcal{B}_1=
\begin{pmatrix}
0&-B_1^2R^{-1}\\
-B_1^2R^{-1}&0
\end{pmatrix},\ \
\mathcal{C}_1=
\begin{pmatrix}
-B_1R^{-1}r+\alpha\\
0
\end{pmatrix},\\
&\mathcal{C}_2=
\begin{pmatrix}
\bar{l}\\
l
\end{pmatrix},\ \
\mathcal{D}_1=
\begin{pmatrix}
B_2\\
0
\end{pmatrix},\ \
\mathcal{D}_2=
\begin{pmatrix}
0\\
0
\end{pmatrix},\ \
\Sigma_1=
\begin{pmatrix}
c\\
0
\end{pmatrix},\\
&\Sigma_2=
\begin{pmatrix}
\bar{c}\\
0
\end{pmatrix},\ \
X_0=
\begin{pmatrix}
x_0\\
0
\end{pmatrix},\ \
\mathcal{M}=
\begin{pmatrix}
\bar{M}&0\\
0&0
\end{pmatrix},\ \
\mathcal{M}_T=
\begin{pmatrix}
\bar{m}\\
m
\end{pmatrix}.
\end{aligned}
\right.
\end{equation*}
Then putting \eqref{ssBarv2} into \eqref{stogether}, we get
\begin{equation}\label{stogether2}
\left\{
\begin{aligned}
 dX(t)&=\big[\mathcal{A}_1X+\mathcal{B}_1Y-\mathcal{D}_1\bar{R}^{-1}\mathcal{D}_1^{\top}\hat{Y}-\mathcal{D}_1\bar{R}^{-1}\bar{r}+\mathcal{C}_1\big]dt+\Sigma_1dW(t)+\Sigma_2d\bar{W}(t),\\
-dY(t)&=\big[\mathcal{A}_1Y+\mathcal{C}_2\big]dt-Z_1dW(t)-Z_2d\bar{W}(t),\ t\in[0,T],\\
  X(0)&=X_0,\ Y(T)=\mathcal{M}X(T)+\mathcal{M}_T.
\end{aligned}
\right.
\end{equation}
According to the analysis in Remark 3.12, we build the relation between $Y(\cdot)$ and $X(\cdot),\hat{X}(\cdot)$ as follows:
\begin{equation}\label{sY2X}
Y(t)=\bar{\Pi}_1(X(t)-\hat{X}(t))+\bar{\Pi}_2(t)\hat{X}(t)+\Phi(t),\ t\in[0,T],
\end{equation}
where $\bar{\Pi}_1(\cdot)$ and $\bar{\Pi}_2(\cdot)$ satisfy the following ODEs, respectively:
\begin{equation}\label{sode1}
\left\{
\begin{aligned}
&\dot{\bar{\Pi}}_1+\bar{\Pi}_1\mathcal{A}_1+\mathcal{A}_1\bar{\Pi}_1+\bar{\Pi}_1\mathcal{B}_1\bar{\Pi}_1=0,\ t\in[0,T],\\
&\bar{\Pi}_1(T)=\mathcal{M},
\end{aligned}
\right.
\end{equation}
\begin{equation}\label{sode2}
\left\{
\begin{aligned}
&\dot{\bar{\Pi}}_2+\bar{\Pi}_2\mathcal{A}_1+\mathcal{A}_1\bar{\Pi}_2+\bar{\Pi}_2(\mathcal{B}_1-\mathcal{D}_1\bar{R}^{-1}\mathcal{D}_1^\top)\bar{\Pi}_2=0,\ t\in[0,T],\\
&\bar{\Pi}_2(T)=\mathcal{M},
\end{aligned}
\right.
\end{equation}
We note that $\bar{\Pi}(\cdot)$ and $\bar{\Pi}(\cdot)$ are both $2\times2$ matrix-value functions, and we can get their solutions as follows:
\begin{equation}
\bar{\Pi}_1=\begin{pmatrix}
\bar{M}e^{\int_0^t2A(s)ds}&0\\
0&0\\
\end{pmatrix},\ \
\bar{\Pi}_2=\begin{pmatrix}
\frac{\bar{M}e^{\int_0^t2A(s)}ds}{\bar{M}\int_0^tB_2^2\bar{R}^{-1}e^{\int_0^s2A(r)dr}ds+1}&0\\
0&0\\
\end{pmatrix}.
\end{equation}
Meanwhile, $\Phi(\cdot),\hat{\Phi}(\cdot)$ are the solutions to the following BSDEs, respectively:
\begin{equation}
\left\{
\begin{aligned}
-d\Phi(t)&=\big[\bar{\Pi}_1\mathcal{B}_1(\Phi-\hat{\Phi})+\bar{\Pi}_2(\mathcal{B}_1-\mathcal{D}_1\bar{R}^{-1}\mathcal{D}_1^\top)\hat{\Phi}-\bar{\Pi}_2\mathcal{D}_1\bar{R}^{-1}\bar{r}
          +\bar{\Pi}_2\mathcal{C}_1+\mathcal{A}_1\Phi+\mathcal{C}_2\big]dt\\
         &\quad-\Theta_1dW(t)-\Theta_2d\bar{W}(t),\ t\in[0,T],\\
  \Phi(T)&=\mathcal{M}_T,
\end{aligned}
\right.
\end{equation}
\begin{equation}
\left\{
\begin{aligned}
-d\hat{\Phi}(t)&=\big\{[\bar{\Pi}_2(\mathcal{B}_1-\mathcal{D}_1\bar{R}^{-1}\mathcal{D}_1^\top)+\mathcal{A}_1]\hat{\Phi}-\bar{\Pi}_2\mathcal{D}_1\bar{R}^{-1}\bar{r}+\bar{\Pi}_2\mathcal{C}_1+\mathcal{C}_2\big\}dt\\
               &\quad-\hat{\Theta}_1dW(t),\ t\in[0,T],\\
  \hat{\Phi}(T)&=\mathcal{M}_T.
\end{aligned}
\right.
\end{equation}
By \eqref{sY2X}, the optimal control \eqref{ssBarv2} has the following explicit representation
\begin{equation}\label{SBarv2}
\begin{aligned}
\bar{v}_2&=-\bar{R}^{-1}(\bar{r}+\mathcal{D}_1^\top\bar{\Pi}_2\hat{X}+\mathcal{D}_1^\top\hat{\Phi})\\
&=-\bar{R}^{-1}(\bar{r}+B_2\bar{\Pi}_{211}\hat{\bar{x}}+B_2\hat{\Phi}_1),\ a.e.t\in[0,T],\ a.s.
\end{aligned}
\end{equation}
where $\hat{\bar{x}}(\cdot)=\hat{x}^{\bar{v}_2}(\cdot)$, $\bar{\Pi}_{211}$ denotes the value of the 1 row and the 1 column of $\bar{\Pi}_2$. And $\hat{\Phi}_1(\cdot)$ denote the first component of $\hat{\Phi}(\cdot)$ which satisfies the following BSDE:
\begin{equation}
\left\{
\begin{aligned}
-d\hat{\Phi}_1(t)&=\big[(A-\bar{\Pi}_{211}B_2^2\bar{R}^{-1})\hat{\Phi}_1-\bar{\Pi}_{211}B_1^2R^{-1}\hat{\Phi}_2-\bar{\Pi}_{211}B_2\bar{R}^{-1}\bar{r}-\bar{\Pi}_{211}B_1R^{-1}r\\
                 &\quad+\bar{\Pi}_{211}\alpha+\bar{l}\big]dt-\hat{\Theta}_{11}dW(t),\ t\in[0,T],\\
  \hat{\Phi}_1(T)&=\bar{m},
\end{aligned}
\right.
\end{equation}
which admits a unique solution
\begin{equation}
\begin{aligned}
(\hat{\Phi}_1,0)=
\bigg(\bigg[\int_0^t\mathcal{H}(\bar{\Pi}_{211},\hat{\Phi}_2)e^{\int_0^s-2(A-\bar{\Pi}_{211}B_2^2\bar{R}^{-1})(r)dr}ds+\bar{m}\bigg]e^{\int_0^t(A-\bar{\Pi}_{211}B_2^2\bar{R}^{-1})(s)ds},0\bigg),
\end{aligned}
\end{equation}
where $\mathcal{H}(\bar{\Pi}_{211},\hat{\Phi}_2)=-\bar{\Pi}_{211}B_1^2R^{-1}\hat{\Phi}_2-\bar{\Pi}_{211}B_2\bar{R}^{-1}\bar{r}-\bar{\Pi}_{211}B_1R^{-1}r+\bar{\Pi}_{211}\alpha+\bar{l}$, and $\hat{\Phi}_2(\cdot)$, as the second component of $\hat{\Phi}(\cdot)$, satisfies the following BSDE:
\begin{equation}
\left\{
\begin{aligned}
-d\hat{\Phi}_2(t)&=(A\hat{\Phi}_2+l)dt-\hat{\Theta}_{12}dW(t),\ t\in[0,T],\\
\hat{\Phi}_2(T)&=m,
\end{aligned}
\right.
\end{equation}
which admits a unique solution:
\begin{equation}
\begin{aligned}
(\hat{\Phi}_2,0)=
\bigg(\bigg[\int_0^tl(s)e^{-2\int_0^sA(r)dr}ds+m\bigg]e^{\int_0^tA(s)ds},0\bigg).
\end{aligned}
\end{equation}
Finally, $\hat{\bar{x}}(\cdot)$ is the solution to the following SDE:
\begin{equation}
\left\{
\begin{aligned}
d\hat{\bar{x}}(t)&=[(A-\bar{\Pi}_{211}B_2^2\bar{R}^{-1})\hat{\bar{x}}-B_2^2\bar{R}^{-1}\hat{\Phi}_1-B_1^2R^{-1}\hat{\Phi}_2\\
                 &\quad-B_2\bar{R}^{-1}\bar{r}-B_1R^{-1}r+\alpha]dt+cdW(t),\ t\in[0,T],\\
 \hat{\bar{x}}(0)&=x_0,
\end{aligned}
\right.
\end{equation}
and we can give its explicit expression in the following:
\begin{equation}
\begin{aligned}
\hat{\bar{x}}(t)&=x_0\Psi^{-1}(t)+\Psi^{-1}(t)\bigg[\int_0^t(-B_2^2\bar{R}^{-1}\hat{\Phi}_1-B_1^2R^{-1}\hat{\Phi}_2-B_2\bar{R}^{-1}\bar{r}\\
                &\qquad-B_1R^{-1}r+\alpha)\Psi(s)ds+\int_0^tc\Psi(s)dW(s)\bigg],
\end{aligned}
\end{equation}
where $\Psi(t)=e^{-\int_0^t(A-\bar{\Pi}_{211}B_2^2\bar{R}^{-1})(s)ds}$.

Therefore, we can give the explicit expressions of optimal control corresponding to the follower and the leader as \eqref{ssBarv1} and \eqref{SBarv2}.

\section{Application to Dynamic Cooperative Advertising with Asymmetric Information}

In this section, we aim to demonstrate the effectiveness of the theoretical results in the previous sections, by a motivated dynamic cooperative advertising problem which is described as a Stackelberg differential game with a manufacturer and a retailer with asymmetric information.

\subsection{Dynamic Cooperative Advertising Problem}

We consider a dynamic cooperative advertising problem in the marketing channel, which contains two participants, the retailer (label 1) and the manufacturer (label 2). For increasing the brand image, the manufacturer controls the rate of advertising efforts in the national media, and the retailer controls the local promotional activities for the brand. In Jorgrnsen et al. \cite{JSZ00}, they consider the case the retailer's promotion enhances current sales of the product and also has a positive impact on the brand image. In Jorgrnsen et al. \cite{JTZ01}, the retailer's promotion is benefit to the current sale of the product, however, the impact of promotions on the brand is not considered. In Jorgrnsen et al. \cite{JTZ03}, the retailer's promotion, although has the positive impact on the sale revenue, is assumed to erode the brand image. Moreover, we should note that all of them consider that the manufacturer can support the retailer's advertising promotional effort by paying part of the cost incurred by a retailer. In our paper, we extend the framework above into the stochastic case with asymmetric information, which is not studied before.

Under our partial information background, in this section, we consider that the manufacturer and the retailer have their own positive impact on the sale of the product, $Q_2$ and $Q_1$, respectively. We study in our example the case where the retailer's promotion effort has a negative impact on the brand image. More differently, we assume that the manufacturer dose not support the retailer's advertising cost, we can explain the reason with our asymmetric information feature. In our setting, we consider the case that no one can observe all the informations completely in a practical situation, thus we assume that the manufacturer and the retailer can only acquire the partial information by their own observation equation:
\begin{equation*}
\left\{
\begin{aligned}
dY_1^{v_1,v_2}(t)&=\big[f_1(t)x^{v_1,v_2}(t)+g(t)\big]dt+dW(t),\ t\in[0,T],\\
  Y_1^{v_1,v_2}(0)&=0,
\end{aligned}
\right.
\end{equation*}
\begin{equation*}
\left\{
\begin{aligned}
dY_2^{v_2}(t)&=[f_2(t)+v_2(t)]dt+dW(t),\ t\in[0,T],\\
  Y_2^{v_2}(0)&=0,
\end{aligned}
\right.
\end{equation*}
respectively, and it is assumed that the information available to the manufacturer (the leader) is less than that of the retailer (the follower).

Combining the interesting phenomenon, we can make our analysis that both the manufacturer and the retailer can not observe the dynamic market influenced by the uncertain factor such as the competition of the marketing shares among the different rivals, the limitation to the local government's policy and the information delay due to the distribution of the channels. Moreover, for a manufacturer, he/she has a lot of sub-products in production, thus he/she is unable to pay all the attention to just one kind of product. But for the retailer, he/she makes a local promotional activities, compared to the manufacturer. Firstly, the retailer is often already experienced in dealing with the needs and preferences of potential adopters as the retailer may provide the potential adopters with other related products and services. Secondly, the retailer may have a long-term and stable relationship with the target customers, which has more advantages to reach potential customers faster and more conveniently. Thirdly, he/she is located in the same geographical region as his/her potential customer base, which is more efficient in providing the after-sale service and getting the information feedback in time. The retailer is natural to build his/her own linkedin and sale network which provide more information that the manufacturer does not know and can get the considerable profit. Therefore, we assume that the retailer grasps more information than the manufacturer does. Based on the above analysis, it is reasonable to assume that the manufacturer have no need to support the retailer's promotion advertising effort. However, at the same time, the retailer's promotion can damage the brand image because the consumers come to believe that frequent promotions are used as a ``cover up" for insufficient quality and they think that high quality products do not worry about their sales and need little or no promotion. More details can be referred in Davis et al. \cite{DIM92}, Papatla and Krishnamurthi \cite{PK96}, Yoo et al. \cite{YDL00}.

Next, we describe the Stackelberg differential game model in detail. Suppose that the manufacturer controls the advertising rate $v_2(\cdot)$ in national media (e.g., television, radio) and the retailer controls the local promotion effort rate $v_1(\cdot)$ (e.g., in-store display, advertising in local newspapers). The manufacturer's advertising effort has a positive impact on the brand image, on the contrary, the retailer's local promotion effort has a negative impact on it. Moreover, different from the deterministic case in \cite{JTZ03}, we take the market uncertainty into account. Therefore, the dynamic of the brand image $x(\cdot)$ is characterized by an SDE:
\begin{equation}\label{brand image}
\left\{
\begin{aligned}
dx^{v_1,v_2}(t)&=\big[\beta_2 v_2(t)-\beta_1 v_1(t)-\delta x^{v_1,v_2}(t)\big]dt+\sigma dW(t)+\bar{\sigma}d\bar{W}(t),\ t\in[0,T],\\
 x^{v_1,v_2}(0)&=x_0,
\end{aligned}
\right.
\end{equation}
where $\beta_2$ and $\beta_1$ are positive parameters measuring the impact of the manufacturer's advertising and the retailer's promotion on the brand image, respectively. $\delta>0$ is the decay rate of the brand image. $W(\cdot)$ and $\bar{W}(\cdot)$ are two independent Brownian motions (random noises) which interfere the brand image. $\sigma,\bar{\sigma}$ are the constant volatility coefficients.

Another assumption different form the existing literatures is that the manufacturer's contribution to the sales revenue rate of the product is given by
\begin{equation}\label{c2}
c_2(v_2(t),x(t))=\gamma_2 v_2(t)+\theta_2 x(t)+\kappa_2 x^2(t),
\end{equation}
and the retailer's contribution to the sales revenue rate of the product is given by
\begin{equation}\label{c1}
c_1(v_1(t),x(t))=\gamma_1 v_1(t)+\theta_1 x(t)+\kappa_1 x^2(t),
\end{equation}
where $\gamma_1,\gamma_2$ are positive constants measuring the effects of the retailer's promotion and manufacturer's advertising effort on the current sales revenue, respectively; $\theta_1,\theta_2$ and $\kappa_1,\kappa_2$ are positive constants representing the effects of brand image on current sales revenue. From \eqref{c2}, we can see both advertising effort and brand image have a positive impact on the current sales revenue which is the main objective of the manufacturer. From \eqref{c1}, we can see the retailer's trade-off: on one hand, the promotion increases the current sales revenue, on the other hand, from \eqref{brand image}, the promotion erodes the brand image which has a positive impact on the current sales revenue, and to some degree, it hinders the brand image's benefits to the current sales revenue.

Then we aim to minimize the retailer's objective functional
\begin{equation}\label{rcf}
\begin{aligned}
J_1(v_1(\cdot))=\mathbb{E}\bigg\{\int_0^T\Big[-\big(\gamma_1 v_1(t)+\theta_1 x(t)+\kappa_1 x^2(t)\big)+\frac{\mu_1}{2}v_1^2(t)\Big]dt+M_1(x^{v_1,v_2}(T))^2\bigg\},
\end{aligned}
\end{equation}
and the manufacturer's
\begin{equation}\label{mcf}
\begin{aligned}
J_2(v_2(\cdot))=\mathbb{E}\bigg\{\int_0^T\Big[-\big(\gamma_2 v_2(t)+\theta_2 x(t)+\kappa_2 x^2(t)\big)+\frac{\mu_2}{2}v_2^2(t)\Big]dt+M_2(x^{v_1,v_2}(T))^2\bigg\},
\end{aligned}
\end{equation}
respectively.
It is reasonable to formulate the objective functional as three parts. The first part indicate the sales revenue would be maximized. The second part is to represent the quadratic cost of the promotion $v_1(\cdot)$ and advertising effort $v_2(\cdot)$, respectively, which need to be minimized, and $\mu_1>0$ and $\mu_2>0$ are constant cost coefficients of $v_1(\cdot)$ and $v_2(\cdot)$, respectively. The third part is to imply the cost of maintaining a pretty brand image in the terminal time which aims to be minimized, $M_1$ and $M_2$ are the corresponding positive cost coefficients.

\begin{Remark}
A difference could be noticed in the objective functionals of the follower and leader, the weight coefficients of $x^2$ are negative when $\kappa_1,\kappa_2$ are assumed to be positive, which seems to be a contradiction to the assumption {\bf (A2)}. In fact, {\bf (A2)} includes a similar condition as standard assumption (4.23) in Chapter 6 of \cite{YZ99}, which can be regarded as a sufficient condition to guarantee the solvability of Riccati equations. But as we can see in the next subsection, we can plot the trajectories of these solutions to Riccati equations in the Figures 1-4 under some particular coefficients while the assumption {\bf (A2)} is not satisfied, which means that the Riccati equations \eqref{riccatieq---}, \eqref{Pi1---}, \eqref{Pi2---} and \eqref{Pi3---} admit unique solutions, respectively. In other word, the assumption {\bf (A2)} is a sufficient condition for the solvability but not a necessary one.
\end{Remark}

\subsection{Numerical Simulation}

In this subsection, we solve the above dynamic cooperative advertising problem by our main results in Section 3. We fail to give the explicit expression of optimal controls of follower and leader because of the complex and coupled form of Riccati equations and  FBSDEs in section 3. Therefore we will give the numerical simulation and draw some figures of Riccati equations $\Pi(\cdot),\Pi_1(\cdot),\Pi_2(\cdot),\Pi_3(\cdot),\mathcal{P}(\cdot)$, the brand image $x(\cdot)$ and its filtering $\check{x}(\cdot),\hat{x}(\cdot)$, the manufacturer's advertising effort $v_2(\cdot)$ and the retailer's promotion $v_1(\cdot)$, to give the reasonable analysis for clarifying the effectiveness of the theoretical results.

Compared to \eqref{LQsde}, \eqref{LQ cost functional} and \eqref{LQ cost functional2}, we have $A(t)=-\delta, B_1(t)=-\beta_1, B_2(t)=\beta_2, \alpha=0, c(t)=\sigma, \bar{c}(t)=\bar{\sigma}, L(t)=-2\kappa_1, R(t)=\mu_1, l(t)=-\theta_1, r(t)=-\gamma_1, M=2M_1, m=0, \bar{L}(t)=-2\kappa_2, \bar{R}(t)=\mu_2, \bar{l}(t)=-\theta_2, \bar{r}(t)=-\gamma_2, \bar{M}=2M_2$ and $\bar{m}=0$. And we put
\begin{equation*}
\left\{
\begin{aligned}
&\mathcal{A}_1(t)=
\begin{pmatrix}
-\delta&0\\
0&-\delta-\beta_1^2\Pi(t)\mu_1^{-1}
\end{pmatrix},\ \
\mathcal{A}_2(t)=
\begin{pmatrix}
-\beta_1^2\Pi(t)\mu_1^{-1}&0\\
0&0
\end{pmatrix},\\
&\mathcal{A}_3=
\begin{pmatrix}
-2\kappa_2&0\\
0&0
\end{pmatrix},\ \
\mathcal{B}_1=
\begin{pmatrix}
0&-\beta_1^2\mu_1^{-1}\\
-\beta_1^2\mu_1^{-1}&0
\end{pmatrix},\ \
\mathcal{C}_1=
\begin{pmatrix}
-\beta_1\gamma_1\mu_1^{-1}\\
0
\end{pmatrix},\\
&\mathcal{C}_2(t)=
\begin{pmatrix}
-\theta_2\\
-\beta_1\Pi(t)\gamma_1\mu_1^{-1}-\theta_1
\end{pmatrix},\ \
\mathcal{D}_1=
\begin{pmatrix}
\beta_2\\
0
\end{pmatrix},\ \
\mathcal{D}_2(t)=
\begin{pmatrix}
0\\
\beta_2\Pi(t)
\end{pmatrix},\\
&\Sigma_1=
\begin{pmatrix}
\sigma\\
0
\end{pmatrix},\ \
\Sigma_2=
\begin{pmatrix}
\bar{\sigma}\\
0
\end{pmatrix},\ \
X_0=
\begin{pmatrix}
x_0\\
0
\end{pmatrix},\ \
\mathcal{M}=
\begin{pmatrix}
2M_2&0\\
0&0
\end{pmatrix},\ \
\mathcal{M}_T=
\begin{pmatrix}
0\\
0
\end{pmatrix}.
\end{aligned}
\right.
\end{equation*}
Applying the theoretical results in Section 3, we can get the optimal solution to the retailer's promotion effort and manufacturer's advertising effort, $\bar v_1(\cdot)$ and $\bar v_2(\cdot)$, respectively:
\begin{equation}
\left\{
\begin{aligned}
\bar{v}_1(t)=&-\mu_1^{-1}\Big\{\Big[\begin{pmatrix}
-\beta_1\Pi(t)&0
\end{pmatrix}+\begin{pmatrix}
0&-\beta_1
\end{pmatrix}(\Pi_1(t)+\Pi_2(t))\Big]\hat{X}(t)\\
&\qquad\qquad+\begin{pmatrix}
0&-\beta_1
\end{pmatrix}\Pi_3(t)\check{X}(t)+\begin{pmatrix}
0&-\beta_1
\end{pmatrix}\Phi(t)-\gamma_1\Big\},\ \ a.e.t\in[0,T],\ a.s.,\\
\bar{v}_2(t)=&-\mu_2^{-1}\Big\{\Big[\begin{pmatrix}
\beta_2&0
\end{pmatrix}(\Pi_1(t)+\Pi_2(t)+\Pi_3(t))+\begin{pmatrix}
0&\beta_2\Pi(t)
\end{pmatrix}\Big]\check{X}(t)\\
&\qquad\qquad+\begin{pmatrix}
\beta_2&0
\end{pmatrix}\check{\Phi}(t)-\gamma_2\Big\},\ \ a.e.t\in[0,T],\ a.s.,
\end{aligned}
\right.
\end{equation}
where $\check{\Phi}(\cdot),\Phi(\cdot)$ satisfy
\begin{equation}\label{solvecheckPhi}
\left\{
\begin{aligned}
d\check{\Phi}(t)&=-\Bigg\{\bigg[(\Pi_1(t)+\Pi_2(t)+\Pi_3(t))\begin{pmatrix}
-\beta_2^2\mu_2^{-1}&-\beta_1^2\mu_1^{-1}\\
-\beta_1^2\mu_1^{-1}&0
\end{pmatrix}\\
&\qquad+\begin{pmatrix}
-\delta-\beta_1^2\Pi(t)\mu_1^{-1}&0\\
-\beta_2^2\Pi(t)\mu_2^{-1}&-\delta-\beta_1^2\Pi(t)\mu_1^{-1}
\end{pmatrix}\bigg]\check{\Phi}(t)\\
&\qquad-(\Pi_1(t)+\Pi_2(t)+\Pi_3(t))\begin{pmatrix}
\beta_1\gamma_1\mu_1^{-1}-\beta_2\gamma_2\mu_2^{-1}\\
0
\end{pmatrix}\\
&\qquad+\begin{pmatrix}
-\theta_2\\
\beta_2\Pi(t)\gamma_2\mu_2^{-1}-\beta_1\Pi(t)\gamma_1\mu_1^{-1}-\theta_1
\end{pmatrix}\Bigg\}dt+\check{\Theta}(t)dW(t),\ t\in[0,T],\\
\check{\Phi}(T)&=0,
\end{aligned}
\right.
\end{equation}
and
\begin{equation}\label{solvePhi}
\left\{
\begin{aligned}
d\Phi(t)&=-\Bigg\{\bigg[(\Pi_1(t)+\Pi_2(t))\begin{pmatrix}
0&-\beta_1^2\mu_1^{-1}\\
-\beta_1^2\mu_1^{-1}&0
\end{pmatrix}\\
&\qquad+\begin{pmatrix}
-\delta-\beta_1^2\Pi(t)\mu_1^{-1}&0\\
0&-\delta-\beta_1^2\Pi(t)\mu_1^{-1}
\end{pmatrix}\bigg]\Phi(t)\\
&\qquad+\bigg[-(\Pi_1(t)+\Pi_2(t)+\Pi_3(t))\begin{pmatrix}
\beta_2^2\mu_2^{-1}&0\\
0&0
\end{pmatrix}\\
&\qquad+\Pi_3(t)\begin{pmatrix}
0&-\beta_1^2\mu_1^{-1}\\
-\beta_1^2\mu_1^{-1}&0
\end{pmatrix}-\begin{pmatrix}
0&0\\
\beta_2^2\Pi(t)\mu_2^{-1}&0
\end{pmatrix}\Bigg]\check{\Phi}(t)\\
&\qquad-(\Pi_1(t)+\Pi_2(t)+\Pi_3(t))\begin{pmatrix}
\beta_1\gamma_1\mu_1^{-1}-\beta_2\gamma_2\mu_2^{-1}\\
0
\end{pmatrix}\\
&\qquad+\begin{pmatrix}
-\theta_2\\
\beta_2\Pi(t)\gamma_2\mu_2^{-1}-\beta_1\Pi(t)\gamma_1\mu_1^{-1}-\theta_1
\end{pmatrix}\Bigg\}dt+\Theta(t)d\widetilde{W}(t),\ t\in[0,T],\\
\Phi(T)&=0,
\end{aligned}
\right.
\end{equation}
respectively, and $\Pi(\cdot), \Pi_1(\cdot), \Pi_2(\cdot), \Pi_3(\cdot)$ satisfy Riccati equations:
\begin{equation}\label{riccatieq---}
\left\{
\begin{aligned}
&\dot{\Pi}(t)-2\delta\Pi(t)-\beta_1^2\mu_1^{-1}\Pi^2(t)-2\kappa_1=0,\ t\in[0,T],\\
&\Pi(T)=2M_1,
\end{aligned}
\right.
\end{equation}
\begin{equation}\label{Pi1---}
\left\{
\begin{aligned}
&\dot{\Pi}_1(t)+\Pi_1(t)\begin{pmatrix}
-\delta&0\\
0&-\delta-\beta_1^2\Pi(t)\mu_1^{-1}
\end{pmatrix}+\begin{pmatrix}
-\delta&0\\
0&-\delta-\beta_1^2\Pi(t)\mu_1^{-1}
\end{pmatrix}\Pi_1(t)\\
&\quad+\Pi_1(t)\begin{pmatrix}
0&-\beta_1^2\mu_1^{-1}\\
-\beta_1^2\mu_1^{-1}&0
\end{pmatrix}\Pi_1(t)+\begin{pmatrix}
-2\kappa_2&0\\
0&0
\end{pmatrix}=0,\ \ t\in[0,T],\\
&\Pi_1(T)=\begin{pmatrix}
2M_2&0\\
0&0
\end{pmatrix},
\end{aligned}
\right.
\end{equation}
{\small\begin{equation}\label{Pi2---}
\left\{
\begin{aligned}
&\dot{\Pi}_2(t)+\Bigg[\begin{pmatrix}
-\delta-\beta_1^2\Pi(t)\mu_1^{-1}&0\\
0&-\delta-\beta_1^2\Pi(t)\mu_1^{-1}
\end{pmatrix}+\Pi_1(t)\begin{pmatrix}
0&-\beta_1^2\mu_1^{-1}\\
-\beta_1^2\mu_1^{-1}&0
\end{pmatrix}\Bigg]\Pi_2(t)\\
&\quad+\Pi_2(t)\Bigg[\begin{pmatrix}
-\delta-\beta_1^2\Pi(t)\mu_1^{-1}&0\\
0&-\delta-\beta_1^2\Pi(t)\mu_1^{-1}
\end{pmatrix}+\begin{pmatrix}
0&-\beta_1^2\mu_1^{-1}\\
-\beta_1^2\mu_1^{-1}&0
\end{pmatrix}\Pi_1(t)\Bigg]\\
&\quad+\Pi_2(t)\begin{pmatrix}
0&-\beta_1^2\mu_1^{-1}\\
-\beta_1^2\mu_1^{-1}&0
\end{pmatrix}\Pi_2(t)+\Pi_1(t)\begin{pmatrix}
-\beta_1^2\Pi(t)\mu_1^{-1}&0\\
0&0
\end{pmatrix}\\
&\quad+\begin{pmatrix}
-\beta_1^2\Pi(t)\mu_1^{-1}&0\\
0&0
\end{pmatrix}\Pi_1(t)=0,\ \ t\in[0,T],\\
&\Pi_2(T)=0,
\end{aligned}
\right.
\end{equation}
\begin{equation}\label{Pi3---}
\left\{
\begin{aligned}
&\dot{\Pi}_3(t)+\Bigg[(\Pi_1(t)+\Pi_2(t))\begin{pmatrix}
0&-\beta_1^2\mu_1^{-1}\\
-\beta_1^2\mu_1^{-1}&0
\end{pmatrix}+\begin{pmatrix}
-\delta-\beta_1^2\Pi(t)\mu_1^{-1}&0\\
0&-\delta-\beta_1^2\Pi(t)\mu_1^{-1}
\end{pmatrix}\Bigg]\Pi_3(t)\\
&\quad+\Pi_3(t)\Bigg[\begin{pmatrix}
-\delta-\beta_1^2\Pi(t)\mu_1^{-1}&0\\
0&-\delta-\beta_1^2\Pi(t)\mu_1^{-1}
\end{pmatrix}+\begin{pmatrix}
0&-\beta_1^2\mu_1^{-1}\\
-\beta_1^2\mu_1^{-1}&0
\end{pmatrix}(\Pi_1(t)+\Pi_2(t))\Bigg]\\
&\quad-(\Pi_1(t)+\Pi_2(t)+\Pi_3(t))\begin{pmatrix}
\beta_2^2\mu_2^{-1}&0\\
0&0
\end{pmatrix}(\Pi_1(t)+\Pi_2(t)+\Pi_3(t))\\
&\quad-\begin{pmatrix}
0&0\\
\beta_2^2\Pi(t)\mu_2^{-1}&0
\end{pmatrix}(\Pi_1(t)+\Pi_2(t)+\Pi_3(t))-(\Pi_1(t)+\Pi_2(t)+\Pi_3(t))\begin{pmatrix}
0&\beta_2^2\Pi(t)\mu_2^{-1}\\
0&0
\end{pmatrix}\\
&\quad+\Pi_3(t)\begin{pmatrix}
0&-\beta_1^2\mu_1^{-1}\\
-\beta_1^2\mu_1^{-1}&0
\end{pmatrix}\Pi_3(t)-\begin{pmatrix}
0&0\\
0&\beta_2^2\Pi^2(t)\mu_2^{-1}
\end{pmatrix}=0,\ \ t\in[0,T],\\
&\Pi_3(T)=0,
\end{aligned}
\right.
\end{equation}}
respectively. Solving \eqref{solvecheckPhi}, we can get the explicit solutions $(\check{\Phi}_1,0)$ and $(\check{\Phi}_2,0)$ as follows:
{\small\begin{equation}
\begin{aligned}
\check{\Phi}_1(t)&=\int_t^T\exp\bigg\{\int_t^s\Big[(\Pi_1(1,1)+\Pi_2(1,1)+\Pi_3(1,1))(-\beta_2^2\mu_2^{-1})+(\Pi_1(1,2)+\Pi_2(1,2)\\
                 &\qquad+\Pi_3(1,2))(-\beta_1^2\mu_1^{-1})-\delta-\beta_1^2\Pi\mu_1^{-1}\Big]dr\bigg\}\Big[(\Pi_1(1,1)+\Pi_2(1,1)+\Pi_3(1,1))\\
                 &\qquad\times(-\beta_1^2\mu_1^{-1})\check{\Phi}_2+(\Pi_1(1,1)+\Pi_2(1,1)+\Pi_3(1,1))(-\beta_1\mu_1^{-1}\gamma_1+\beta_2\mu_2^{-1}\gamma_2)+\bar{l}\Big]ds,\\
\check{\Phi}_2(t)&=\int_t^T\exp\bigg\{\int_t^s\Big[(\Pi_1(2,1)+\Pi_2(2,1)+\Pi_3(2,1))(-\beta_1^2\mu_1^{-1})-\delta-\beta_1^2\Pi\mu_1^{-1}\Big]dr\bigg\}\\
                 &\qquad\times\Big[\big[(\Pi_1(2,1)+\Pi_2(2,1)+\Pi_3(2,1))(-\beta_2^2\mu_2^{-1})+(\Pi_1(2,2)+\Pi_2(2,2)\\
                 &\qquad+\Pi_3(2,2))(-\beta_1^2\mu_1^{-1})-\beta_2^2\Pi\mu_2^{-1}\big]\check{\Phi}_1+(\Pi_1(2,1)+\Pi_2(2,1)+\Pi_3(2,1))\\
                 &\qquad\times(-\beta_1\mu_1^{-1}\gamma_1+\beta_2\mu_2^{-1}\gamma_2)-\beta_1\Pi\mu_1^{-1}\gamma_1+\beta_2\Pi\mu_2^{-1}\gamma_2-\theta_1\Big]ds.
\end{aligned}
\end{equation}}
and solving \eqref{solvePhi}, we can get the explicit solution $(\Phi_1,0)$ and $(\Phi_2,0)$ as follows:
{\small\begin{equation}
\begin{aligned}
\Phi_1(t)&=\int_t^T\exp\bigg\{\int_t^s\Big[(\Pi_1(1,2)+\Pi_2(1,2))(-\beta_1^2\mu_1^{-1})-\delta-\beta_1^2\Pi\mu_1^{-1}\Big]dr\bigg\}\Big[(\Pi_1(1,1)+\Pi_2(1,1))\\
         &\quad\times(-\beta_1^2\mu_1^{-1})\Phi_2+\big[-(\Pi_1(1,1)+\Pi_2(1,1)+\Pi_3(1,1))\beta_2^2\mu_2^{-1}+\Pi_3(1,2)(-\beta_1^2\mu_1^{-1})\big]\check{\Phi}_1\\
         &\quad+\Pi_3(1,1)(-\beta_1^2\mu_1^{-1})\check{\Phi}_2+(\Pi_1(1,1)+\Pi_2(1,1)+\Pi_3(1,1))(-\beta_1\mu_1^{-1}\gamma_1+\beta_2\mu_2^{-1}\gamma_2)-\theta_2\Big]ds,\\
\Phi_2(t)&=\int_t^T\exp\bigg\{\int_t^s\Big[(\Pi_1(2,1)+\Pi_2(2,1))(-\beta_1^2\mu_1^{-1})-\delta-\beta_1^2\Pi\mu_1^{-1}\Big]dr\bigg\}\Big[(\Pi_1(2,2)\\
         &\qquad+\Pi_2(2,2))(-\beta_1^2\mu_1^{-1})\Phi_1+\big[-(\Pi_1(2,1)+\Pi_2(2,1)+\Pi_3(2,1))\beta_2^2\mu_2^{-1}\\
         &\qquad+\Pi_3(2,2)(-\beta_1^2\mu_1^{-1})-\beta_2^2\Pi\mu_2^{-1}\big]\check{\Phi}_1+\Pi_3(2,1)(-\beta_1^2\mu_1^{-1})\check{\Phi}_2+(\Pi_1(2,1)+\Pi_2(2,1)\\
         &\qquad+\Pi_3(2,1))(-\beta_1\mu_1^{-1}\gamma_1+\beta_2\mu_2^{-1}\gamma_2)-\beta_1\Pi\mu_1^{-1}\gamma_1+\beta_2\Pi\mu_2^{-1}\gamma_2-\theta_1\Big]ds,
\end{aligned}
\end{equation}}
where $\check{\Phi}(\cdot)\equiv\begin{pmatrix}
\check{\Phi}_1(\cdot)\\
\check{\Phi}_2(\cdot)
\end{pmatrix},\Phi(\cdot)\equiv\begin{pmatrix}
\Phi_1(\cdot)\\
\Phi_2(\cdot)
\end{pmatrix}$ and $\Pi_{i}\equiv\begin{pmatrix}
\Pi_{i}(1,1)&\Pi_{i}(1,2)\\
\Pi_{i}(2,1)&\Pi_{i}(2,2)
\end{pmatrix}$, $i=1,2,3$.
Finally, $\check{X}(\cdot)$ and $\hat{X}(\cdot)$ satisfy
\begin{equation}
\left\{
\begin{aligned}
d\check{X}(t)&=\Bigg\{\Bigg[\begin{pmatrix}
-\delta-\beta_1^2\Pi(t)\mu_1^{-1}&-\beta_2^2\Pi(t)\mu_2^{-1}\\
0&-\delta-\beta_1^2\Pi(t)\mu_1^{-1}
\end{pmatrix}+\begin{pmatrix}
-\beta_2^2\mu_2^{-1}&-\beta_1^2\mu_1^{-1}\\
-\beta_1^2\mu_1^{-1}&0
\end{pmatrix}\\
&\qquad\times(\Pi_1(t)+\Pi_2(t)+\Pi_3(t))\Bigg]\check{X}(t)+\begin{pmatrix}
-\beta_2^2\mu_2^{-1}&-\beta_1^2\mu_1^{-1}\\
-\beta_1^2\mu_1^{-1}&0
\end{pmatrix}\check{\Phi}(t)\\
&\qquad+\begin{pmatrix}
\beta_2\gamma_2\mu_2^{-1}-\beta_1\gamma_1\mu_1^{-1}\\
0
\end{pmatrix}\Bigg\}dt+\begin{pmatrix}
\sigma\\
0
\end{pmatrix}d{W}(t),\ t\in[0,T],\\
\check{X}(0)&=\begin{pmatrix}
x_0\\
0
\end{pmatrix},
\end{aligned}
\right.
\end{equation}
and
\begin{equation}
\left\{
\begin{aligned}
d\hat{X}(t)&=\Bigg\{\Bigg[\begin{pmatrix}
-\delta-\beta_1^2\Pi(t)\mu_1^{-1}&0\\
0&-\delta-\beta_1^2\Pi(t)\mu_1^{-1}
\end{pmatrix}\\
&\qquad+\begin{pmatrix}
0&-\beta_1^2\mu_1^{-1}\\
-\beta_1^2\mu_1^{-1}&0
\end{pmatrix}(\Pi_1(t)+\Pi_2(t))\Bigg]\hat{X}(t)\\
&\qquad+\Bigg[\begin{pmatrix}
0&-\beta_1^2\mu_1^{-1}\\
-\beta_1^2\mu_1^{-1}&0
\end{pmatrix}\Pi_3(t)-\begin{pmatrix}
\beta_2^2\mu_2^{-1}&0\\
0&0
\end{pmatrix}(\Pi_1(t)+\Pi_2(t)+\Pi_3(t))\\
&\qquad-\begin{pmatrix}
0&\beta_2^2\Pi(t)\mu_2^{-1}\\
0&0
\end{pmatrix}\Bigg]\check{X}(t)+\begin{pmatrix}
0&-\beta_1^2\mu_1^{-1}\\
-\beta_1^2\mu_1^{-1}&0
\end{pmatrix}\Phi(t)\\
&\qquad-\begin{pmatrix}
\beta_2^2\mu_2^{-1}&0\\
0&0
\end{pmatrix}\check{\Phi}(t)+\begin{pmatrix}
\beta_2\gamma_2\mu_2^{-1}-\beta_1\gamma_1\mu_1^{-1}\\
0
\end{pmatrix}\Bigg\}dt\\
&\quad+\Bigg[\begin{pmatrix}
\sigma\\
0
\end{pmatrix}+\mathcal{P}\begin{pmatrix}
f_1(t)\\
0
\end{pmatrix}\Bigg]d\widetilde{W}(t),\ t\in[0,T],\\
\hat{X}(0)&=\begin{pmatrix}
x_0\\
0
\end{pmatrix},
\end{aligned}
\right.
\end{equation}
respectively.

\subsubsection{Explicit Numerical Solutions}

To be more intuitive, we would like to give some numerical simulations with certain particular coefficients and some figures could be plotted to illustrate our reasonable results in reality and verify the effectiveness of our theoretical results. First, let $\beta_1=0.2,\beta_2=0.4,\delta=0.5,\gamma_1=0.6,\gamma_2=0.5,\sigma=0.2,\bar{\sigma}=0.4,\kappa_1=0.6,\kappa_2=0.5,\theta_1=0.4,\theta_2=0.6,\mu_1=0.3,\mu_2=0.5,M_1=0.8,M_2=1,x_0=0.01,f_1=0.6$. Then, applying Euler's method, we derive the numerical solutions of $\Pi(\cdot),\Pi_1(\cdot),\Pi_2(\cdot),\Pi_3(\cdot)$ and $\mathcal{P}(\cdot)$ in Figure 1.

\begin{figure}[H]
\centering
\subfigure[The trajectory of $\Pi(\cdot)$.]
{
\includegraphics[width=3in]{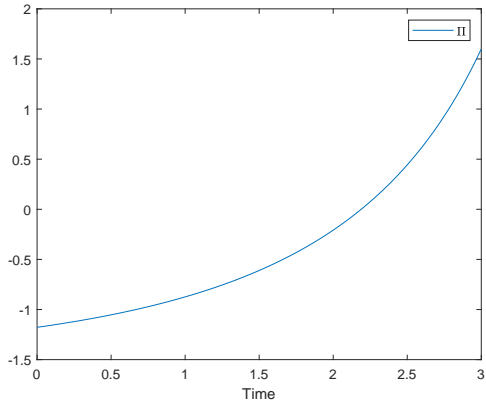}
}
\subfigure[The trajectory of $\Pi_1(\cdot)$.]
{
\includegraphics[width=3in]{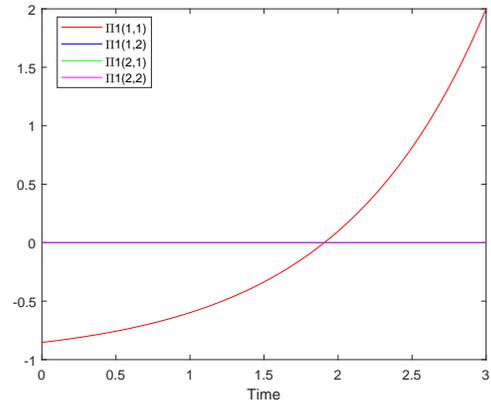}
}
\subfigure[The trajectory of $\Pi_2(\cdot)$.]
{
\includegraphics[width=3in]{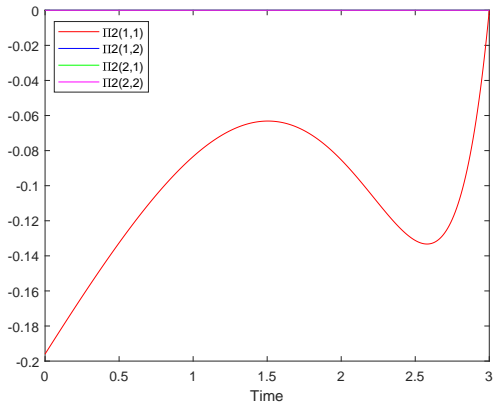}
}
\subfigure[The trajectory of $\Pi_3(\cdot)$.]
{
\includegraphics[width=3in]{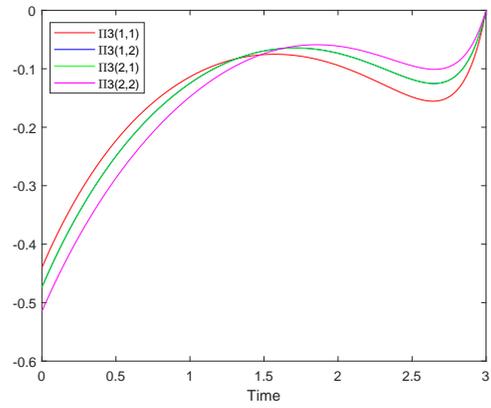}
}
\subfigure[The trajectory of $\mathcal{P}(\cdot)$.]
{
\includegraphics[width=3in]{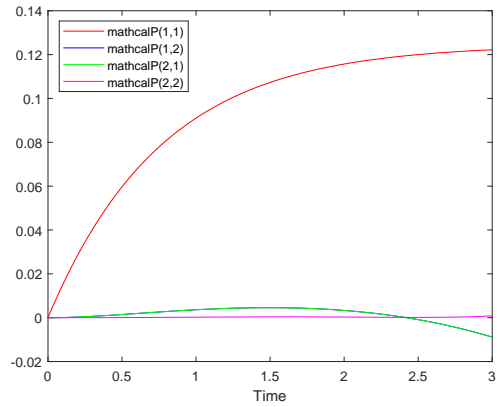}
}
\caption{The trajectories of Riccati equations.}
\end{figure}

In detail, $\Pi(\cdot)$ is one-dimensional function, and $\Pi_1(\cdot), \Pi_2(\cdot), \Pi_3(\cdot)$ and $\mathcal{P}$ are four symmetric matrix-valued functions, and we have $\Pi_{i}(1,2)=\Pi_{i}(2,1)$ and $\mathcal{P}(1,2)=\mathcal{P}(2,1)$, where $\Pi_{i}(1,2)$ and $\mathcal{P}(1,2)$ denote the value of the 1 row and the 2 column of $(\Pi_{i},i=1,2,3)$ and $\mathcal{P}$, respectively. Especially, we have $\Pi_{i}(1,2)=\Pi_{i}(2,1)=\Pi_{i}(2,2)=0, i=1,2$.

Similarly, we plot the trajectories of $\check{X}(\cdot), \hat{X}(\cdot), X(\cdot)$ and $(\bar{v}_1(\cdot),\bar{v}_2(\cdot))$ in Figures 2 and 3, respectively. More concretely, $\check{X}(\cdot), \hat{X}(\cdot), X(\cdot)$ are all $2\times1$ vector-valued processes with the initial value $\check{X}(0)=\hat{X}(0)=X(0)=\begin{pmatrix}
x_0\\
0
\end{pmatrix}$. Then, $\bar{v}_1(\cdot), \bar{v}_2(\cdot)$ are both one-dimensional processes.

\begin{figure}[H]
\centering
\includegraphics[height=7.5cm,width=14cm]{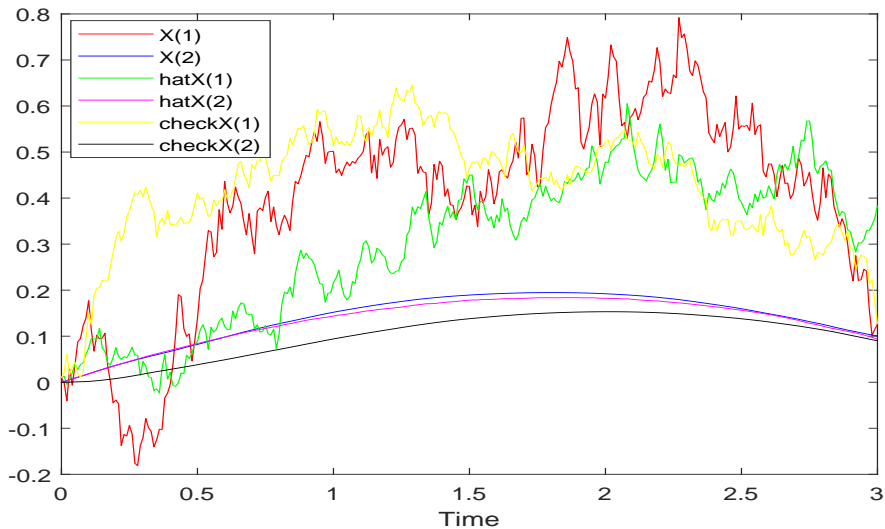}
\caption{The trajectory of $\check{X}(\cdot),\hat{X}(\cdot),X(\cdot)$.}
\end{figure}

In Figure 2, the dynamic curves of the first components of all the state process $X(\cdot)$ and the state filtering processes $\check{X}(\cdot),\hat{X}(\cdot)$: $X(1)$ and $\check{X}(1),\hat{X}(1)$, have tendencies for sharp fluctuations and we give one reasonable explanation. Especially, for the brand image $X(\cdot)$, observing the red line, the decreasing trends and increasing trends appear alternately due the uncertain factor and the asymmetric information in the market. Furthermore, we could illustrate that once the retailer (the follower) does much local promotional efforts depending on his/her more available information at the beginning, which, sometimes, causes the nearly negative value of brand image in the graph. After which, the manufacturer (the leader), although only acquires less information, still notices the negative effect on the brand image, then he/she will strengthen the corresponding advertising effort (such as national advertising and radio) to save the brand image as much as possible for the need of macro control, which leads to a positive value of image. The small fluctuations (the phenomenon of up and down) partially shows the game feature between the manufacturer and the retailer, which, finally, guarantee that the value of brand image at terminal time is positive. The interesting phenomenon, to some degree, can be explained in the following with the optimal Stackelberg equilibrium points $(\bar{v}_1,\bar{v}_2)$.

\begin{figure}[H]
\centering
\includegraphics[height=7.5cm,width=14cm]{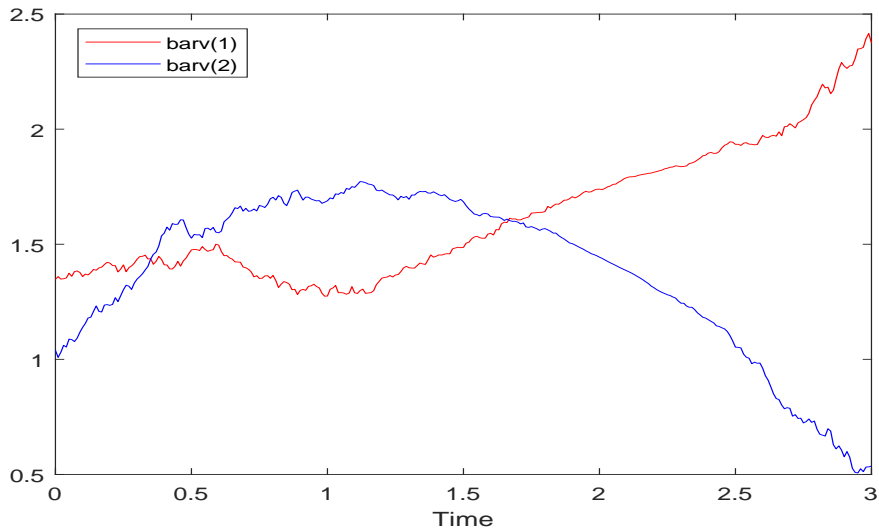}
\caption{The trajectory of $\bar{v}_1(\cdot),\bar{v}_2(\cdot)$.}
\end{figure}

In Figure 3, intuitively, the value of advertising effort of the retailer (the follower) is larger than that of the the manufacturer (the leader) at the beginning, The phenomenon can be explained as follows, compared to the manufacturer (the leader), the retailer (the follower) possesses more information resource (such as the long-term customer relationship, professional selling technique and geographical advantage), naturally, he/she would like to input more energy into the product, and prefer to increase more promotional effort to make advertising. After a period of time, the manufacturer observes that the local advertising effort of the retailer has eroded the brand image, and then takes timely remedy to make up the damage, therefore, the manufacturer, to some extent, keeps strengthening his/her advertising effort which even exceeds the value of the retailer's promotional effort for a period of time.  However, it is impossible for the manufacturer to keep paying too much attentions on just one product, because he/she has various types of products to manufacture and propaganda. Therefore, the manufacture usually pour less energy into the advertising effort of single product. Finally, the value of the manufacture's advertising effort become decreasing, which causes that it is again less than that of the retailer.

Next, we will make some analysis about the effect of some practical coefficients on the brand image $x(\cdot)$, the follower's local promotion effort rate $v_1(\cdot)$ and the leader's advertising rate $v_2(\cdot)$.

In our setting, the manufacturer's advertising effort is helpful for the brand image, however, the retailer's promotion has a negative influence on the brand image, and the positive parameters $\beta_2,\beta_1$ measures the impact of $v_2,v_1$ on $x$, respectively. So we consider the relative effect between $\beta_2$ and $\beta_1$ on the leader's advertising rate $v_2$ and follower's local promotion effort $v_1$, that is, we study the changes of $v_2,v_1$ when $\beta_2$ takes these values in $[0.04,0.14,0.24,0.34,0.44]$.

\begin{figure}[H]
\centering
\subfigure[The impact of $\beta_2$ on leader's advertising rate]
{
\includegraphics[width=3in]{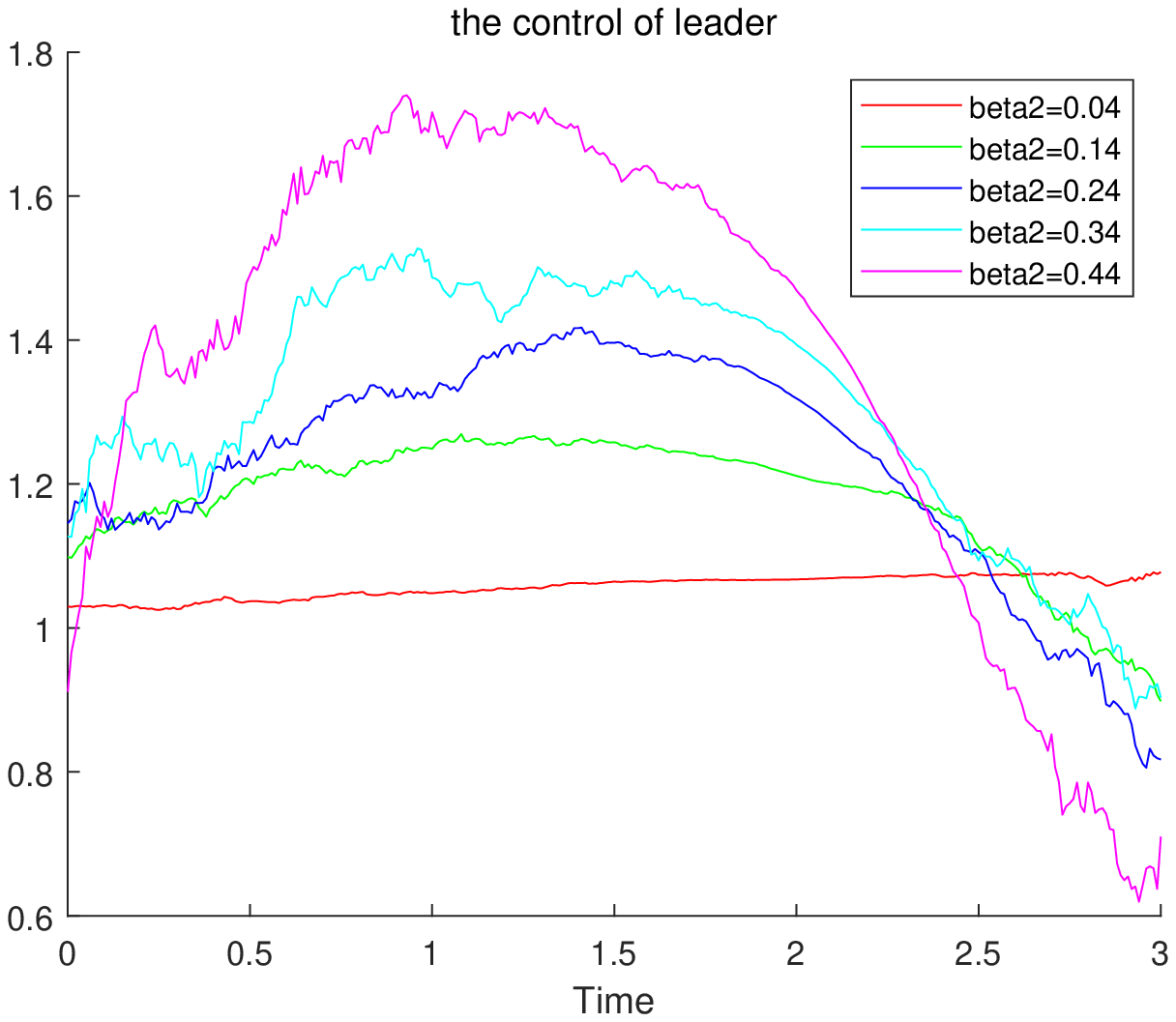}
}
\subfigure[The impact of $\beta_2$ on follower's local promotion effort]
{
\includegraphics[width=3in]{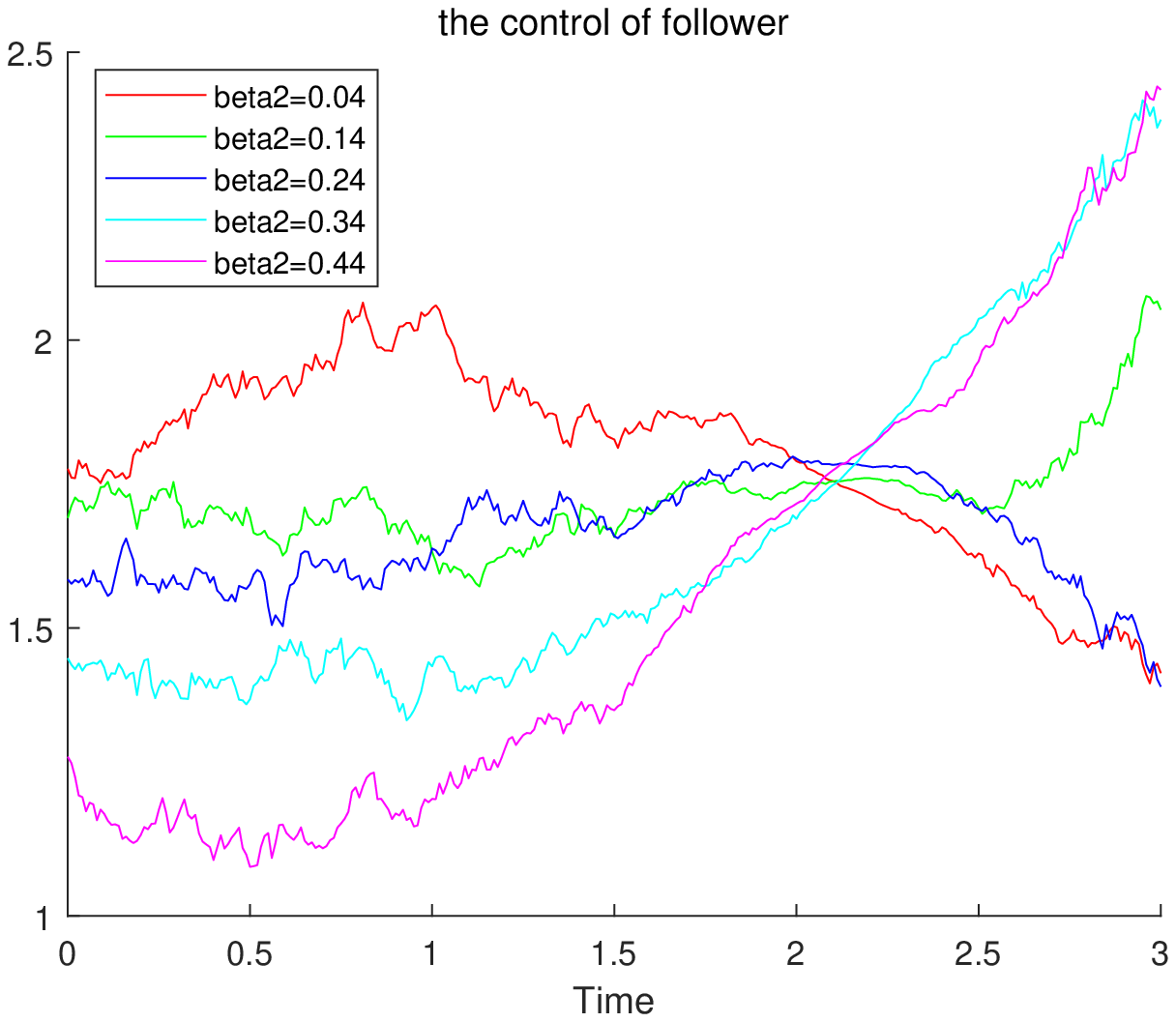}
}
\caption{The relationship between $v_1,v_2$ and $\beta_2$}
\end{figure}

In Figure 4, we notice that, in the left subfigure, the parameter $\beta_2$ has a positive impact on the leader's advertising rate, that is, large value of $\beta_2$ means that the advertising rate the leader controls occupies a greater proportion in the effect on the brand image compared to the fixed parameter $\beta_1$ of the retailer. Therefore, it is natural for the manufacturer to enhance propaganda by increase the advertising rate in national media, and we can see the purple line is at the top during the overall trend. It is worth noting that all the lines show the decrease trend, which, as we analyzed in the dynamic game, is because it is impossible for the manufacture to spend all the energy on just one product. However, for the retailer, we encounter the opposite situation. In the right subfigure, we find the red line becomes a highest one where the value of $\beta_2$ is relatively small compared to $\beta_1$. It causes that manufacturer's advertising rate has a weak influence on the brand image, that is, he keeps a weak advertising effort. But in this case, it is important for them to advertise the product as much as they can, which is the final goal. So the retailer have to shoulder the task and strengthen his local advertising effort by means of his information. Obviously, the final reduction is due to his damage on brand image.

In \eqref{rcf}, $\mu_1$ is the cost coefficient of $v_1$, which also represents the cost weight on $v_1$. We then study the changes of $v_2,v_1$ when $\mu_1$ takes these values in $[0.1,0.3,0.5,0.7,0.9,1.1]$, that is, we impose different levels of stress on the cost of retailer, and observe the tendency.

\begin{figure}[H]
\centering
\subfigure[The impact of $\mu_1$ on leader's advertising rate]
{
\includegraphics[width=3in]{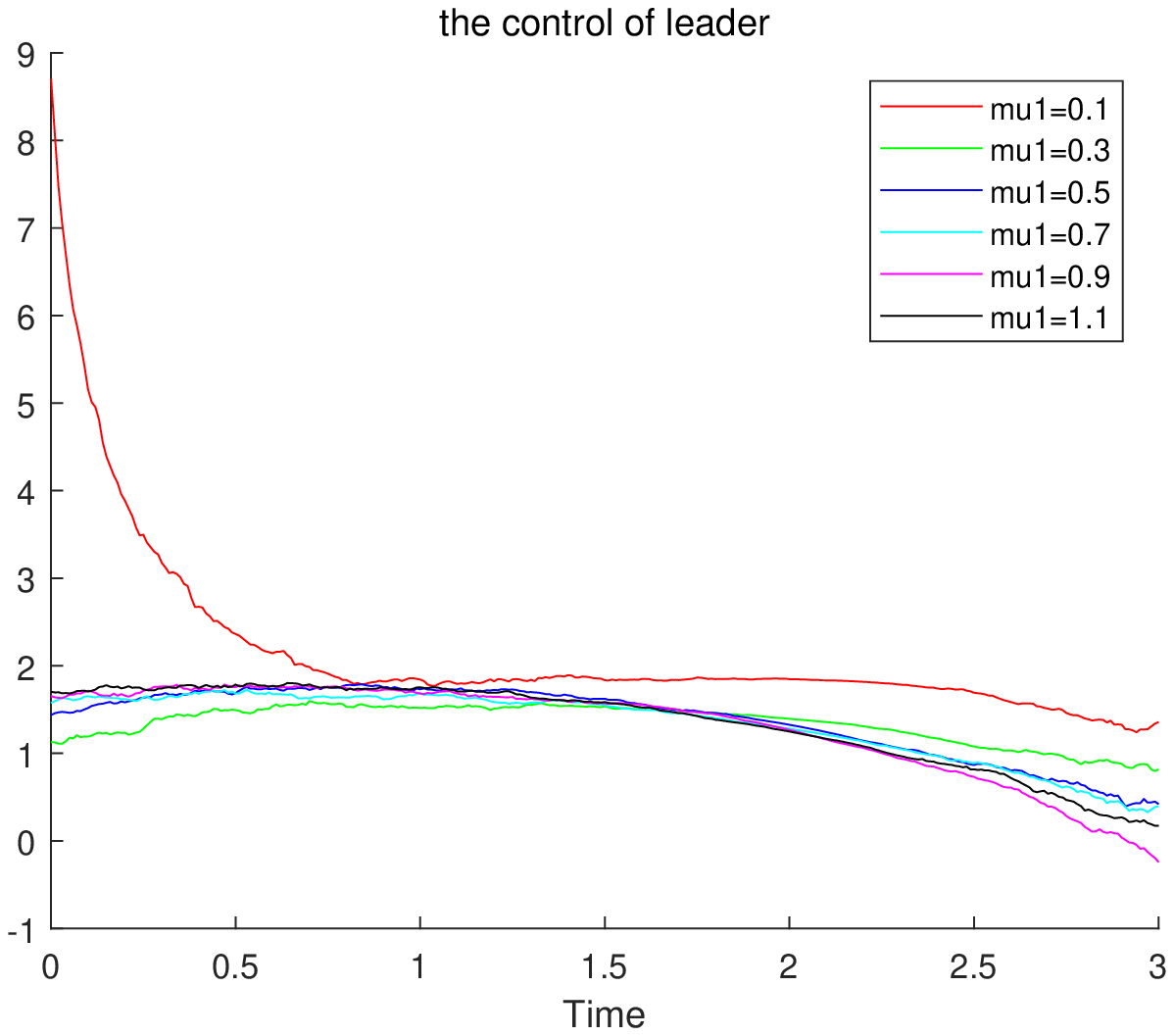}
}
\subfigure[The impact of $\mu_1$ on follower's local promotion effort]
{
\includegraphics[width=3in]{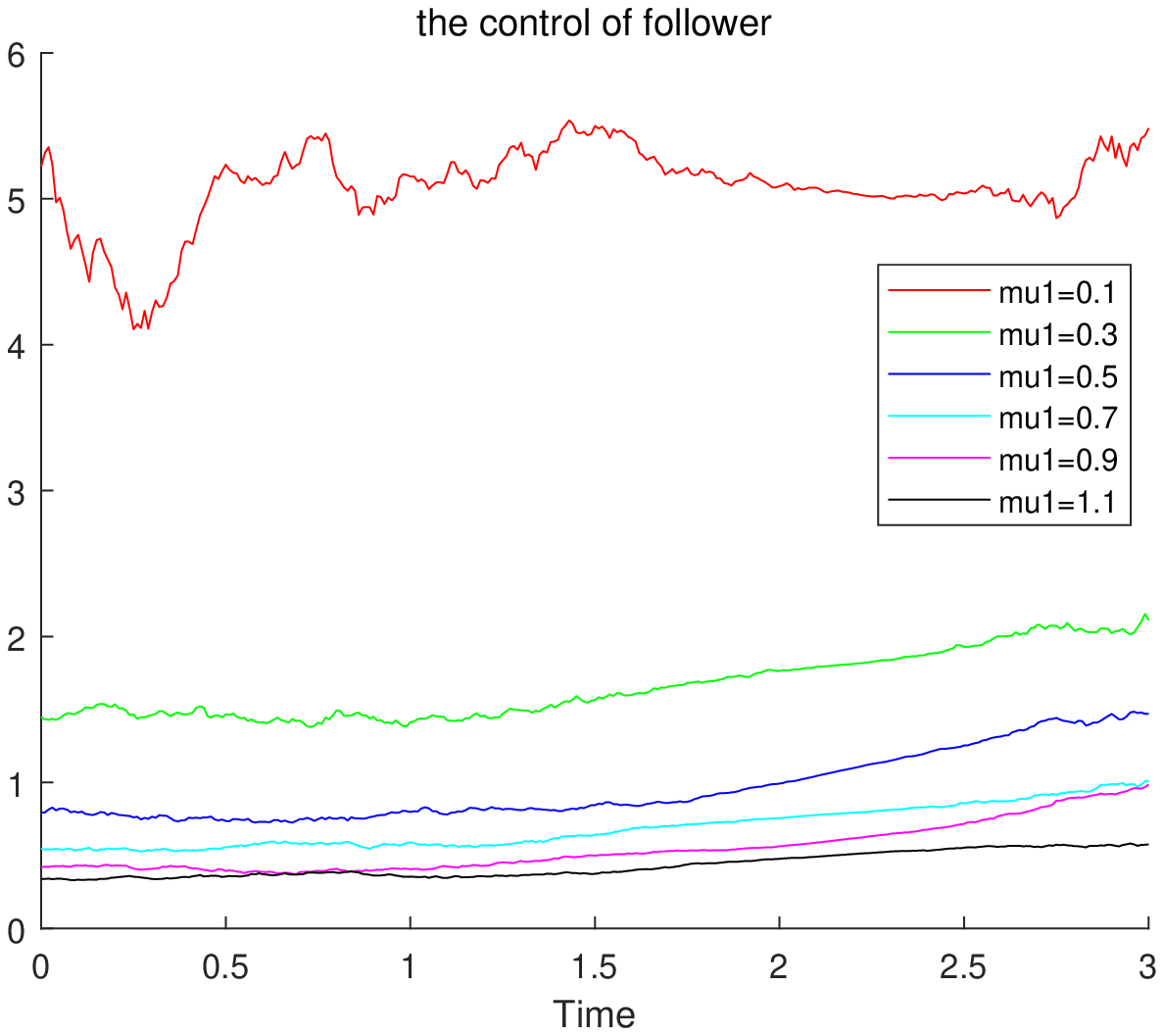}
}
\caption{The relationship between $v_1,v_2$ and $\mu_1$}
\end{figure}

In Figure 5, we study the quadratic cost coefficient $\mu_1$'s effect on the promotion $v_1$ and advertising effort $v_2$. First, in the left subfigure, we can see most lines hold a relatively steady and similar trend when the value of $\mu_1$ varies from 0.3 to 1.1, except for red line corresponding to $\mu_1=0.1$. Although the red line show a strange tendency, we can give a reasonable explanation. For the common goal which is to increase sales revenue and decrease cost, in the case of $\mu_1=0.1$, the cost coefficient of retailer's local promotion effort is the smallest. The manufacturer knows it is benefit for the retailer to strengthen the advertising effort, which can guarantee the ideal sales revenue, so he choose to reduce the strength of advertising effort in a period of time, but hold a stable advertising rate in the later period of time because he have to control that the brand image is not too negatively affected by the retailer. In the right subfigure, it is evident that the lower the cost coefficient, the stronger the retailer's promotion. Furthermore, the smallest cost coefficient stimulate the retailer's promotion which is suppressed as the cost coefficient goes up. Finally, it drops to a lowest level that the retailer have to receive and keep unchanged.

We finally study the changes of $x,\hat{x},\check{x}$ when $\mu_1$ takes these values in $[0.1,0.3,0.5,0.7,0.9,1.1]$, that is, compared to the quadratic cost coefficient of the manufacturer, we concern the fluctuations of brand image itself, brand image observed by the retailer and that observed by the manufacturer, respectively, if the retailer's cost weight increases or decreases.
\begin{figure}[H]
\centering
\subfigure[The impact of $\mu_1$ on $\check{x}$]
{
\includegraphics[width=3in]{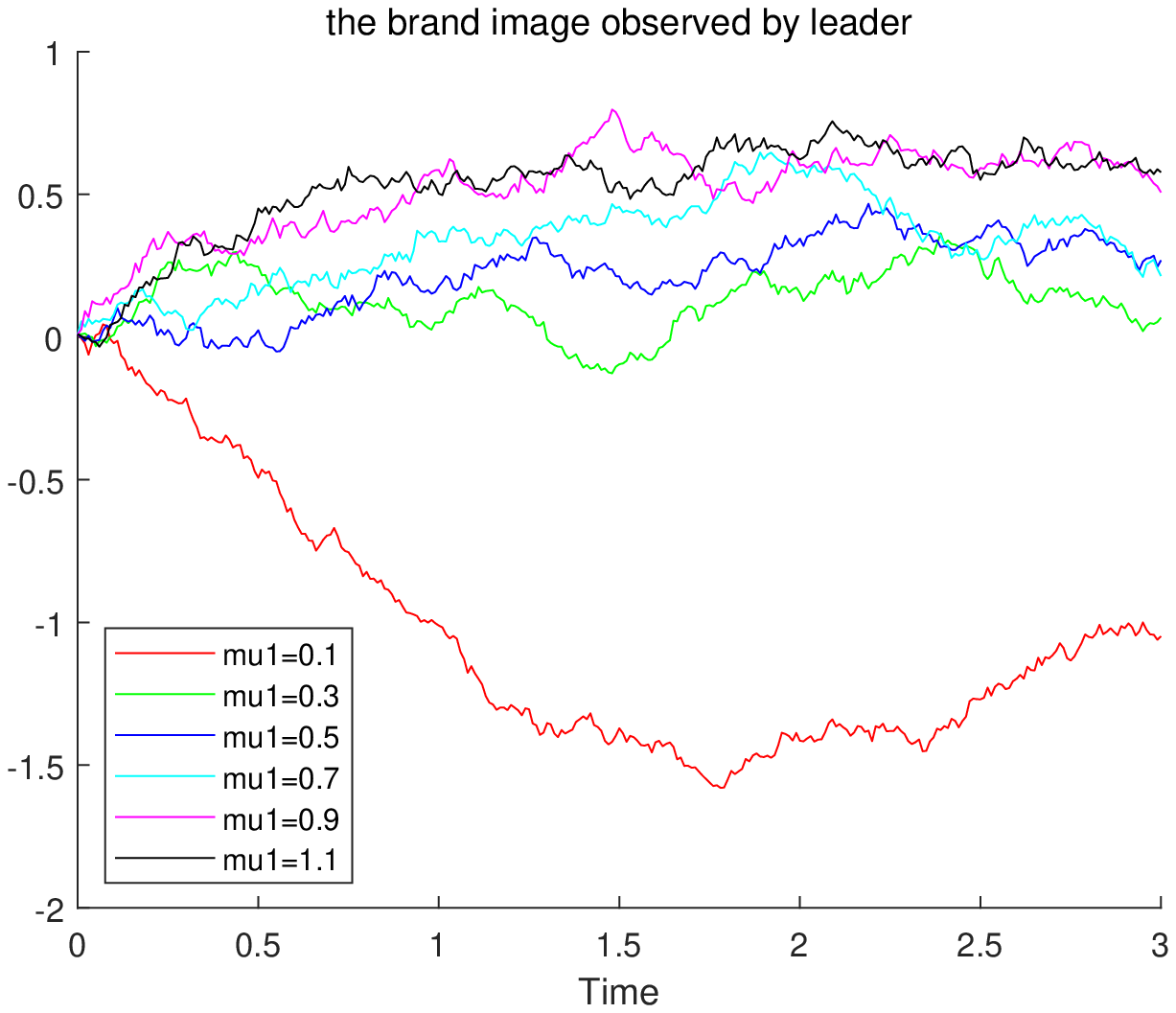}
}
\subfigure[The impact of $\mu_1$ on $\hat{x}$]
{
\includegraphics[width=3in]{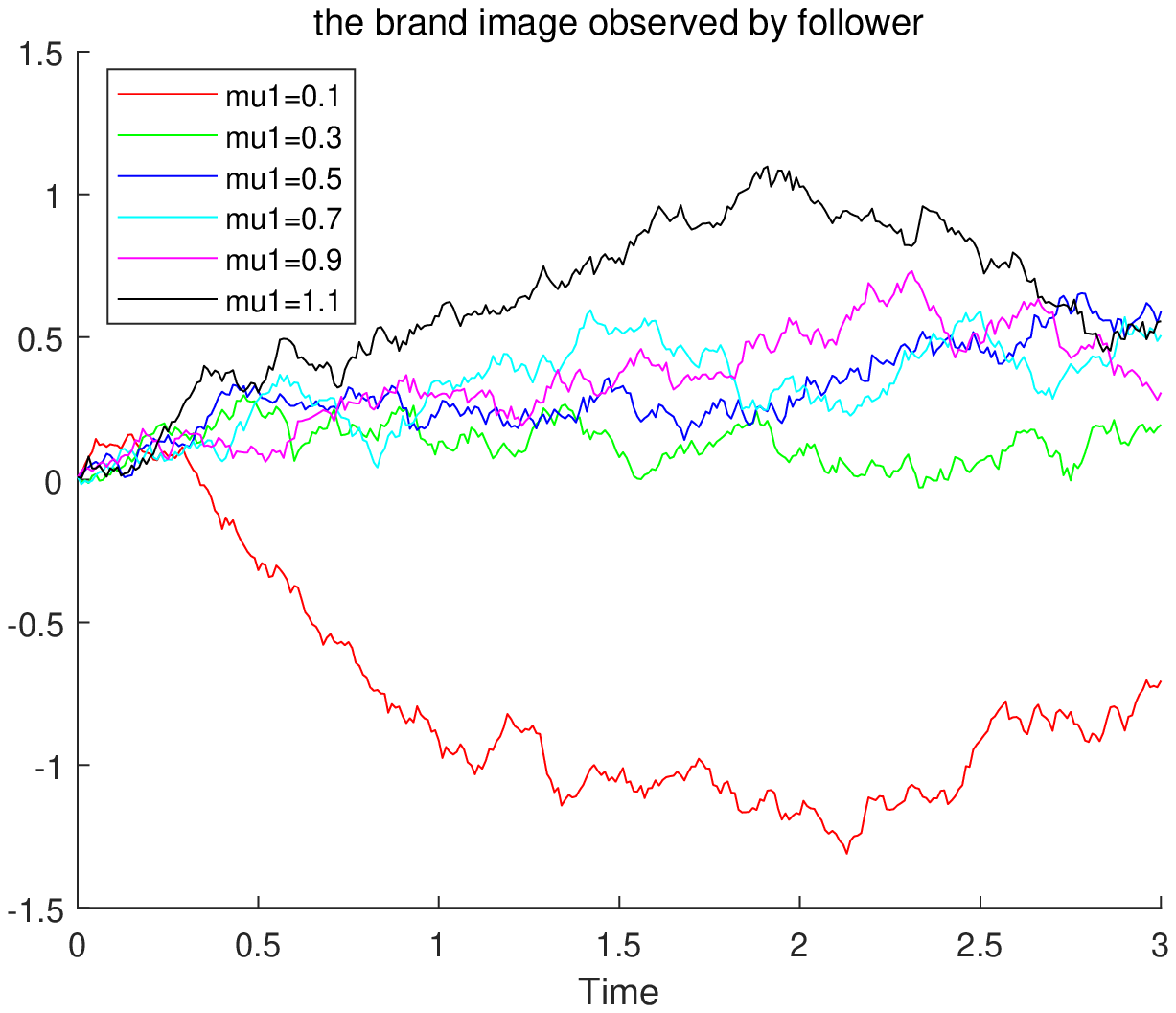}
}
\subfigure[The impact of $\mu_1$ on $x$]
{
\includegraphics[width=3in]{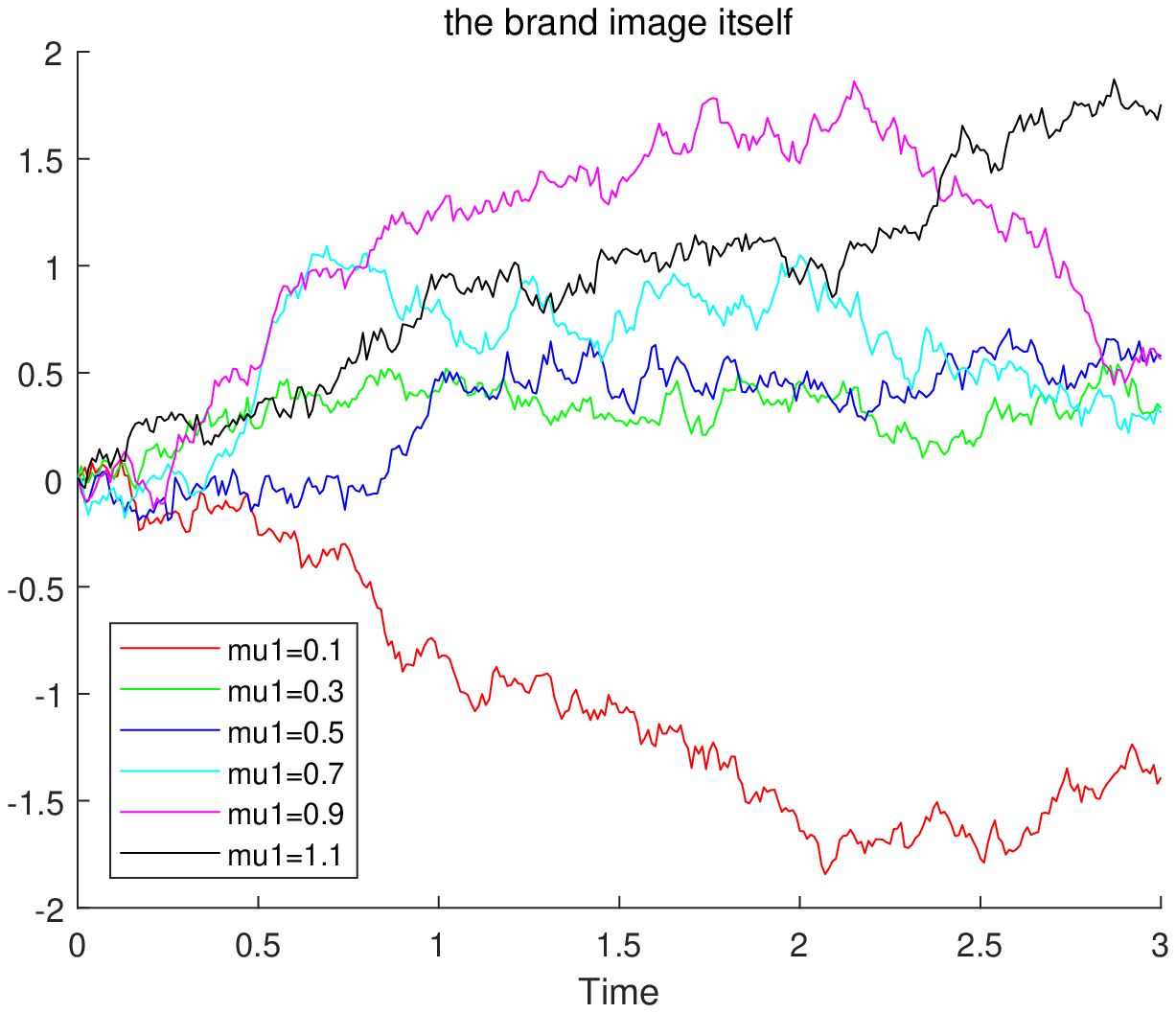}
}
\caption{The relationship between $\check{x},\hat{x},x$ and $\mu_1$}
\end{figure}

In Figure 6, except for the case of $\mu_1=0.1$, the other cases all show that the brand image have a normal fluctuation influenced by the manufacturer's advertising rate and the retailer's local promotion, and present a positive effect on the brand image in general. But when $\mu_1$ becomes sufficiently small, which is a situation beyond the control, the retailer's negative effect on the brand image will be magnified, which illustrate the tendency of red line in all three subfigures.

\section{Concluding remarks}

In this paper, we have studied an LQ partially observed Stackelberg differential game, different from the existing literatures \cite{SWX17}, \cite{WWZ20}, where both the leader and the follower can only observe the state process through their own observation process. Different from \cite{LFCM19}, \cite{LDFCM21} and \cite{ZS20}, for the technical need, the information available to the leader is less than that available to the follower. In order to obtain the necessary and sufficient conditions of the Stackelberg equilibrium point, we mainly utilize the state decomposition and backward separation techniques to solve the circular dependence between the observation process $Y_1(Y_2)$ and control variable $v_1(v_2$, respectively). First, the LQ partially observed optimal control problem of the follower is solved, and we get the state estimation feedback of optimal control via the FBSDFEs and one Riccati equation. Then, we introduce a new LQ partially observed optimal control problem of the leader where the state equation is not only driven by the $\mathcal{F}_t$-standard Brownian motions $W, \bar{W}$, but also by another $\mathcal{F}_t^{Y_1}$-Brownian motion $\widetilde{W}$ (innovation process) which is not independent of $W$ or $\bar{W}$, whose unique solvability can be obtained by a new combined idea. Meanwhile, a kind of fully-coupled FBSDE with filtering is studied as a by-product. Then we get the feedback representation form via an SDFE, a semi-martingale and three high-dimensional Riccati equations. We also study a special partially observed case and give the explicit expression of Stackelberg equilibrium points. Finally, we solve a dynamic cooperative advertising problem with asymmetric information and verify the effectiveness of proposed theoretical result by numerical simulation, moreover, we also discuss the relationship between the Stackelberg equilibrium points, state estimate and some practical parameters.

Possible extensions to the Stackelberg stochastic differential game with mixed leadership \cite{BBS10}, \cite{BCCSSY19}, multiple followers \cite{MX15}, \cite{WWZ20} and delayed systems \cite{XSZ18}, \cite{XZ16} promise to be interesting research topics. We will consider these challenging topics in our future work.

\end{document}